\renewcommand{\Re}{\operatorname{Re}}
\renewcommand{\(}{\left\(}
\renewcommand{\)}{\right\)}
\renewcommand{\[}{\left\[}
\renewcommand{\]}{\right\]}
\numberwithin{equation}{section}
 \theoremstyle{plain}
\newtheorem{theorem}{Theorem}[section]
\newtheorem{lemma}[theorem]{Lemma}
\newtheorem{remark}[]{Remark}
\newtheorem{conjecture}[theorem]{Conjecture}
\newtheorem{corollary}[theorem]{Corollary}
\def\proof{\@ifnextchar[{\@oproof}{\@nproof}}
\def\@oproof[#1][#2]{\trivlist\item[\hskip\labelsep\textit{#2 Proof of\
#1.}~]\ignorespaces}
\def\@nproof{\trivlist\item[\hskip\labelsep\textit{Proof.}~]\ignorespaces}
\begin{document}
\title[Rademacher-type formula and Tur\'{a}n inequalities for cubic overpartitions]{Rademacher-type exact formula and higher order Tur\'{a}n inequalities for cubic overpartitions}

\author{Archit Agarwal}
\address{Archit Agarwal\\ Department of Mathematics \\
Indian Institute of Technology Indore \\
Simrol,  Indore,  Madhya Pradesh 453552, India.} 
\email{archit.agrw@gmail.com,   phd2001241002@iiti.ac.in }

\author{Meghali Garg}
\address{Meghali Garg \\ Department of Mathematics \\
Indian Institute of Technology Indore \\
Simrol,  Indore,  Madhya Pradesh 453552, India.} 
\email{meghaligarg.2216@gmail.com,   phd2001241005@iiti.ac.in}

 \author{Bibekananda Maji}
\address{Bibekananda Maji\\ Department of Mathematics \\
Indian Institute of Technology Indore \\
Simrol,  Indore,  Madhya Pradesh 453552, India.} 
\email{bibek10iitb@gmail.com,  bibekanandamaji@iiti.ac.in}

\thanks{2010 \textit{Mathematics Subject Classification.} Primary 05A20,  11N37,  11P82; Secondary 11B57,  11F20.\\
\textit{Keywords and phrases.} Circle method,  Cubic partitions,  Cubic overpartitions,  Rademacher-type exact formula,   Log-concavity,  Higher-order Tur\'{a}n inequality. }

\maketitle

\begin{abstract}
In 1918,  Hardy and Ramanujan made a breakthrough by developing the circle method to deduce an asymptotic formula for the partition function $p(n)$, which was later refined by Rademacher in 1937 to produce an absolutely convergent series representation for $p(n)$.  Since then,  Rademacher-type exact formulas for various partition functions have been investigated by many mathematicians. The concept of overpartitions was introduced by Lovejoy and Corteel in $2004$.  Kim,  in $2010$,   studied an overpartition analogue of cubic partitions,  termed as cubic overpartitions. The main objective of this paper is to establish a Rademacher-type exact formula for cubic overpartitions and, as an application, to derive an explicit error term that leads to their log-concavity.  Furthermore, applying a result of Griffin, Ono, Rolen, and Zagier, we establish higher-order Turán inequalities for cubic overpartitions.   In addition, we obtain log-subadditivity and generalized log-concavity properties for cubic overpartitions  inspired by the work of Bessenrodt–Ono and DeSalvo–Pak on the ordinary partition function.
\end{abstract}

\tableofcontents

\section{Introduction}

The theory of integer partitions has captivated attention of numerous mathematicians due to its rich combinatorial structure and deep connections with analysis, number theory, and modular forms. A partition of a non-negative integer $n$ is a way of writing $n$ as a sum of positive integers, where the order of summands is irrelevant. The number of such partitions is denoted by $p(n)$. The generating function for $p(n)$ was first given by  Euler \cite{Euler} way back in 1748, 
\begin{align*}
f(q):=\sum_{n=0}^\infty p(n) q^n = \frac{1}{(q;q)_\infty}.
\end{align*}
 Here and throughout this paper, $q$ is a complex number with $|q|<1$ and $(a;q)_\infty$ denotes the $q$-Pochhammer symbol defined as
$
(a;q)_\infty := \prod_{k=0}^{\infty} (1 - aq^k).
$

At first glance, determining the value of $p(n)$ might seem a straightforward combinatorial exercise. However, proving deep results about the partition function involves techniques from complex analysis, modular forms as well as knowledge of Kloosterman sums and Bessel functions. A major breakthrough in the study of partition function occurred in 1918, when Hardy and Ramanujan \cite{HR1918} developed the circle method to establish the following asymptotic formula for $p(n)$:
\begin{align}\label{asymptotic formula for p(n)}
p(n) \sim \frac{1}{4n\sqrt{3}} \exp\left( \pi \sqrt{\frac{2n}{3}} \right), \quad \text{as}\quad n \to \infty.
\end{align}
This technique was later refined by Rademacher \cite{HRad1937, HRad1943, HRad1973} and quite remarkably he obtained an exact absolutely convergent series for $p(n)$, 
\begin{align}\label{exact formula for p(n)}
p(n) = 2\pi \left(\frac{1}{6\sqrt{\frac{2}{3}\left(n-\frac{1}{24}\right)}}\right)^{\frac{3}{2}} \sum_{k=1}^{\infty} \frac{A_{k}(n)}{k} I_{\frac{3}{2}}\left(\frac{\pi}{k}\sqrt{\frac{2}{3}\left(n-\frac{1}{24}\right)} \right),
\end{align}
where $I_{\nu}$ denotes the modified Bessel function of the first kind and $A_k(n)$ is a Kloosterman-type sum involving exponential terms,  defined as
\begin{align*}
A_{k}(n) = \sum_{\substack{h ~\mathrm{mod}~k \\ (h,k)=1}} e^{i\pi s(h,k) -2\pi i n \frac{h}{k}},
\end{align*}
with $s(h,k)$ being the Dedekind sum given by 
\begin{align}\label{dedekind_sum}
s(h,k):= \sum_{r=1}^{k-1}\frac{r}{k} \left( \frac{hr}{k} - \left\lfloor \frac{hr}{k} \right\rfloor - \frac{1}{2} \right).
\end{align}
It is worth noting that Rademacher  \cite[Equation (1.8)]{HRad1937} first used the path considered by Hardy and Ramanujan to derive the above exact formula \eqref{exact formula for p(n)} for $p(n)$.   Later,  he \cite{HRad1943} modified the path using Ford circles based on Farey fractions in the unit interval,  which we will briefly discuss in Section \ref{Prilim}. 
The first term of the series \eqref{exact formula for p(n)} aligns with the asymptotic formula \eqref{asymptotic formula for p(n)} established by Hardy and Ramanujan.  Their proof fundamentally relied on the fact that the generating function for the  partition function has a connection with the Dedekind eta function,  which is a half integral weight modular form and its transformation formula played a crucial role.   Along with Zuckerman \cite{RZ38},  Rademacher extended his method for the Fourier coefficients of certain modular forms of positive weight.   
Building upon this approach,  Zuckerman \cite{Zucker39}  derived exact convergent series for the Fourier coefficients, at any cusp, of all weakly holomorphic modular forms of negative weight associated with any congruence subgroup of $SL_2(\mathbb{Z})$.  In 2012,  Bringmann and Ono \cite{BO2011} further extended the work of Rademacher and Zuckerman for the coefficients of harmonic Maass forms of weight less than or equal to half.  

\subsection{Rademacher-type exact formula}

Hardy-Ramanujan-type asymptotic formula and Rademacher-type exact formula for various partition functions have been investigated   by several mathematicians including Andrews \cite{Andrews},  Bringmann-Ono \cite{BO06},  Bringmann-Mahlburg \cite{BM11},   Grosswald \cite{Gros58, Gros60},  Hagis \cite{Hagis62}-\cite{Hagis71}, Hua \cite{LKHUA},  Iseki \cite{I61},  Niven \cite{N40},  and Sills \cite{Sills2010}.  For the modern framework of Rademacher-type expansions for the coefficients of harmonic Maass forms, we refer the reader to \cite{BFOR17}.

Over time, numerous analogues and generalizations of the partition function have been introduced. One such example is the \emph{cubic partition function} introduced by Chan \cite{Chan10-1} in relation to Ramanujan's cubic continued fraction.   He further studied Ramanujan-type congruences  \cite{Chan10-2, Chan11} modulo prime powers. 
This function is denoted by $a(n)$, which counts the number of ways $n$ can be written as sum of natural numbers where even numbers can appear in two different colors. For example, the number $3$ has $4$ such partitions, namely, $3,~2_R+1,~2_B+1$ and $1+1+1$, where the subscripts $R$ and $B$ denote the colors Red and Blue, respectively. The generating function for $a(n)$ is given by
\begin{align*}
\sum_{n=0}^\infty a(n) q^n = \frac{1}{(q;q)_\infty (q^2;q^2)_\infty}.
\end{align*}
The arithmetic properties of the cubic partition function, particularly Ramanujan-style congruences for various primes have been investigated extensively; for further details, we refer the reader to \cite{ASS25,  CD17, Hir20, Lin17, Mer25, Sel14} and the references therein.
In 2024,    Mauth \cite[Theorem 3.1]{Mauth} applied Zuckerman's technique to derive an exact formula for $a(n)$.

In recent years, the study of overpartitions, first introduced by Corteel and Lovejoy \cite{overpartition}, has become an active area of research in partition theory.
Overpartitions are partitions in which the first occurrence (or equivalently last occurrence) of each distinct part may be overlined. This concept has yielded numerous combinatorial and analytic applications, especially in the theory of mock modular forms, ranks, and cranks.  A Rademacher-type exact formula for overpartitions  had already been established by Zuckerman  \cite[p.~321, Equation (8.53)]{Zucker39} way back in 1939,  which was later explicitly given by Sills \cite{Sills2010},  
\begin{align*}
\overline{p}(n) = \frac{\pi}{4\sqrt{2} n^{\frac{3}{4}}} \sum_{\substack{k=1 \\ k~\mathrm{odd}}}^\infty \frac{B_{k}(n)}{k} I_{\frac{3}{2}} \left( \frac{\pi \sqrt{n}}{k} \right),  
\end{align*}
where
\begin{align*}
B_k(n)=  \sum_{\substack{h ~\mathrm{mod}~k \\ (h,k)=1}} e^{\pi i( 2 s(h,k)- s(2h,k)) -2\pi i n \frac{h}{k}},
\end{align*}
and $s(h,k)$ is the Dedekind sum defined in \eqref{dedekind_sum}.

An overpartition analogue of the cubic partition function was introduced by Kim \cite{Kim}, where the first occurrence of a part in a cubic partition may be overlined.  If parts are repeated, only one of them is allowed to be over lined. For example, there are $12$ cubic overpartitions of $3$, namely, $3,~\bar{3},~2_R+1,~\bar{2}_R+1,~2_R+\bar{1},~\bar{2}_R+\bar{1},~2_B+1,~ \bar{2}_B+1,~2_B+\bar{1},  ~\bar{2}_B+\bar{1},  ~1+1+1,$ and $\bar{1}+1+1$. The function counting such partitions is denoted by $\overline{a}(n)$ and its generating function is given by
\begin{align}\label{generating function of cubic overpartition}
\sum_{n=0}^\infty \overline{a}(n) q^n = \frac{(-q;q)_\infty (-q^2;q^2)_\infty}{(q;q)_\infty (q^2;q^2)_\infty}.
\end{align}
Despite the significant progress in exact formulas for various partition functions, a Rademacher-type exact formula for the cubic overpartition function $\overline{a}(n)$ has remained elusive. In this paper, we rigorously establish an exact formula for $\overline{a}(n)$ expressed as a sum of two convergent series of Bessel functions weighted by  Kloosterman-type sums. This result not only extends Rademacher's classical work but also illustrates the modular richness inherent in cubic overpartitions.

\subsection{Higher order Tur\'{a}n  inequalities}

In addition to exact formulas and asymptotics, another important direction in the study of partition functions is the investigation of Tur\'{a}n inequalities. A sequence $\{t(n)\}$ of real numbers is said to be log-concave if it satisfies the Tur\'{a}n inequality:
\begin{align*}
t(n)^2 \geq t(n-1)t(n+1),\quad \text{for}~ \text{all} \quad n \geq 1. 
\end{align*}
The study of log-concavity and Tur\'{a}n inequalities play a pivotal role in combinatorics, number theory, and are deeply connected to the theory of real entire functions in the Laguerre–P\'{o}lya class, as well as to aspects of the Riemann Hypothesis as discussed in \cite{CNV86, Dimitrov,  GORZ19, szego}.  Beyond log-concavity,  the study of higher-order Turán inequalities finds a natural framework within the theory of Jensen polynomials.

For a real sequence $\{t(n)\}$, the Jensen polynomial of degree $d$ and shift $n$ is defined as
\begin{align*}
J_t^{d,n}(X) := \sum_{i=0}^{d} \binom{d}{i} t(n+i)\, X^i.
\end{align*}
We say that $t(n)$ satisfies the degree $d$ Tur\'{a}n inequality at $n$ if the Jensen polynomial $J_t^{d,n-1}(X)$ is hyperbolic i.e.,  has only real roots.

Mathematicians have been interested in studying Tur\'{a}n inequalities for various partition functions due to their deep connections with log-concavity, real-rootedness of polynomials, and asymptotic behaviour of combinatorial sequences.  Log-concavity is a common property observed in many sequences arising in combinatorics.  Prominent examples include the binomial coefficients, the Stirling numbers, and the Bessel numbers,  see \cite{S89}.   For the ordinary partition function $p(n)$, the following inequality:
 $$ 
 p(n)^2 > p(n-1)p(n+1)
 $$ 
 was first established by Nicolas \cite{Nicolas78} for all $n>25$.  This result was later reproved by DeSalvo and Pak \cite{DeSalvo2015} using Lehmer's estimate on the error term of the Hardy-Ramanujan-Rademacher formula for $p(n)$.  

Recently,  Chen, Jia and Wang \cite{CJW2019} established the hyperbolicity of $J_{p}^{3,n-1}(X)$ for all  $n \geq 94$.  Further,  in the same paper,   they conjectured that for any integer $d \geq 1$ there exists an integer $N_p(d)$ such that $J_{p}^{d,n-1}(X)$ is hyperbolic for $n \geq N_p(d)$. 

This conjecture was settled by Griffin, Ono, Rolen and Zagier \cite[Theorem 5]{GORZ19}.  Moreover,  they proved the hyperbolicity of Jensen polynomials for a wider class of sequences, namely for the Fourier coefficients of weakly holomorphic modular forms over the full modular group $SL_2(\mathbb{Z})$.  They further connected their result via a more general phenomenon, in which Jensen polynomials  for a class of sequences can be modelled by the Hermite polynomials $H_d(X)$,  which is defined as
\begin{align*}
\sum_{d=0}^{\infty} H_d(X) \frac{t^d}{d!} = e^{-t^2 + Xt} = 1 + X t + \frac{X^2 - 2}{2!} t^2 + \frac{X^3 - 6X}{3!} t^3 + \cdots.
\end{align*}
They proved the following beautiful result.  
\begin{theorem}\label{theorem of Griffin ono rolen and zagier} \cite[Theorem 3 and 8]{GORZ19}
Let $\{\alpha(n)\}, \{A(n)\}, \{\delta(n)\}$ be sequences of positive real numbers, with $\delta(n)$ tending to $0$.  
For integers $j \geq 0$,  $d \geq 3$, suppose that there are real numbers $g_3(n), g_4(n), \ldots, g_d(n)$, for which 
\begin{align*}
\log \left( \frac{\alpha(n+j)}{\alpha(n)} \right)
= A(n)j - \delta(n)^2 j^2 + \sum_{i=3}^{d} g_i(n) j^i + o(\delta(n)^d),
\quad \text{as } n \to \infty,
\end{align*}
with $g_i(n) = o(\delta(n)^i)$ for each $3 \leq i \leq d$.  
Then we have  
\begin{align*}
\lim_{n \to \infty} \left( \frac{\delta(n)^{-d}}{\alpha(n)}
J_{\alpha}^{d,n} \!\left( \frac{\delta(n)X - 1}{\exp(A(n))} \right) \right)
= H_d(X).
\end{align*}
\end{theorem}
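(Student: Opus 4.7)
The plan is to substitute the hypothesized expansion of $\log(\alpha(n+j)/\alpha(n))$ directly into the Jensen polynomial, use the specific evaluation point to cancel the $e^{A(n)j}$ factors, and reduce matters to the $\delta(n)\to 0$ limit of a Gaussian--binomial sum, which turns out to be the $d$-th Hermite polynomial.

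Writing $\alpha(n+i) = \alpha(n)\,\exp(A(n)i)\,\exp(E_n(i))$ with
\begin{align*}
E_n(i) := -\delta(n)^2 i^2 + \sum_{k=3}^d g_k(n)\, i^k + o(\delta(n)^d),
\end{align*}
and evaluating $J_\alpha^{d,n}$ at $Y = (\delta(n)X - 1)/e^{A(n)}$, the $e^{A(n)i}$ factors from $\alpha(n+i)$ cancel the $e^{-A(n)i}$ coming from $Y^i$, giving
\begin{align*}
\frac{\delta(n)^{-d}}{\alpha(n)}\, J_\alpha^{d,n}\!\left(\frac{\delta(n)X - 1}{e^{A(n)}}\right)
= \delta(n)^{-d} \sum_{i=0}^d \binom{d}{i} e^{E_n(i)} (\delta(n)X - 1)^i.
\end{align*}
I would then factor $e^{E_n(i)} = e^{-\delta(n)^2 i^2}\,(1 + \epsilon_n(i))$, where $\epsilon_n(i)$ collects the contributions of $G_n(i) := \sum_{k=3}^d g_k(n) i^k$ and the residual $o(\delta(n)^d)$. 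The first intermediate claim is that the $\epsilon_n$-piece contributes $o(1)$ to the limit. This rests on the combinatorial estimate
\begin{align*}
\sum_{i=0}^d \binom{d}{i} i^j (\delta(n)X - 1)^i = O\!\left(\delta(n)^{\,d - \min(j,d)}\right),
\end{align*}
which I would prove by writing $i^j = \sum_s S(j,s)\, i^{\underline{s}}$ in terms of falling factorials and invoking $\binom{d}{i}\, i^{\underline{s}} = d^{\underline{s}} \binom{d-s}{i-s}$. Combined with $g_k(n) = o(\delta(n)^k)$, this forces every monomial produced by the multinomial expansion of $\epsilon_n(i)$, when paired with the binomial sum over $i$, to be of size $o(\delta(n)^d)$, and hence to be erased by the $\delta(n)^{-d}$ normalization.

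It then suffices to prove
\begin{align*}
\lim_{n \to \infty} \delta(n)^{-d} \sum_{i=0}^d \binom{d}{i} e^{-\delta(n)^2 i^2} (\delta(n)X - 1)^i = H_d(X).
\end{align*}
Expanding $e^{-\delta^2 i^2} = \sum_{m=0}^{\lfloor d/2 \rfloor} (-\delta^2 i^2)^m/m! + O(\delta^{d+1})$ (the tail being absorbed into $o(1)$ just as above) and swapping sums reduces the inner sum to
\begin{align*}
\sum_{i=0}^d \binom{d}{i} i^{2m} (\delta X - 1)^i = \sum_{s=0}^{2m} S(2m,s)\, d^{\underline{s}} (\delta X - 1)^s (\delta X)^{d-s},
\end{align*}
and only the $s = 2m$ term survives the multiplication by $\delta^{2m-d}$ and the passage $\delta \to 0$, yielding $d^{\underline{2m}} X^{d-2m}$. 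Summing against $(-1)^m/m!$ produces $\sum_{m=0}^{\lfloor d/2\rfloor} \frac{(-1)^m\, d!}{m!\,(d-2m)!}\, X^{d-2m}$, which by the generating function $e^{-t^2 + Xt} = \sum_d H_d(X)\, t^d/d!$ recorded in the statement is precisely $H_d(X)$.

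The main obstacle will be the error control in the second step: verifying that every cross term in the multinomial expansion of $\exp(G_n(i) + o(\delta(n)^d)) - 1$, including higher-order mixed products such as $g_{k_1}(n)\, g_{k_2}(n)\, i^{k_1 + k_2}$, genuinely contributes $o(\delta(n)^d)$ after pairing with $\sum_i \binom{d}{i}(\delta(n)X - 1)^i$. The crux is that this binomial convolution gains exactly $d - j$ extra powers of $\delta(n)$ from cancellations, which precisely balances the $o(\delta(n)^j)$ size of the relevant products of $g_k$'s and so keeps the normalized error bounded away from contributing to the limit.
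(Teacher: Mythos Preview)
The paper does not contain a proof of this statement at all; Theorem~\ref{theorem of Griffin ono rolen and zagier} is quoted as a black-box result of Griffin, Ono, Rolen, and Zagier \cite[Theorems~3 and~8]{GORZ19} and is then applied, without reproof, in the argument for Theorem~\ref{hyperbolicity of Jensen polynomial}. So there is no ``paper's own proof'' against which to compare your proposal.

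That said, your sketch is essentially the argument given in the original source \cite{GORZ19}: write $\alpha(n+i)=\alpha(n)\,e^{A(n)i}\,e^{E_n(i)}$, evaluate the Jensen polynomial at $(\delta(n)X-1)/e^{A(n)}$ so that the $e^{A(n)i}$ factors cancel, control the error terms via the Stirling-number/falling-factorial identity for $\sum_i\binom{d}{i}i^j(\delta X-1)^i$, and identify the surviving main term with the explicit expansion $H_d(X)=\sum_{m=0}^{\lfloor d/2\rfloor}\frac{(-1)^m d!}{m!\,(d-2m)!}X^{d-2m}$. The outline is sound. The only step that would need a careful write-up is precisely the one you flag at the end---verifying that each mixed product $g_{k_1}(n)\cdots g_{k_r}(n)\,i^{k_1+\cdots+k_r}$ in the expansion of $e^{G_n(i)+o(\delta(n)^d)}-1$ is $o(\delta(n)^d)$ after pairing with the binomial sum---but the mechanism you describe (the binomial cancellation supplying $d-\min(j,d)$ extra powers of $\delta(n)$) is exactly the right one.
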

It is well-known that 
the Hermite polynomials have distinct real roots and this property of a polynomial
with real coefficients is invariant under linear any transformation,  which implies that Jensen polynomials are hyperbolic for large values $n$.

Log-concavity for overpartitions was established by Engel \cite{Engel2017} in 2017.   Subsequently, Liu and Zhang \cite{LiuZhang2021} studied higher-order Turán inequalities for overpartitions. More recently, log-concavity, third-order Turán inequalities, and strict log-subadditivity for cubic partitions were examined by Li, Peng, and Zhang \cite{cubicturan}.  In 2024,  Dong and Ji \cite{DJ2024} established log-concavity and the third-order Turán inequality for the distinct partition function. Higher-order Turán inequalities for MacMahon's plane partitions were investigated by Ono, Pujahari, and Rolen \cite{OPR2022}.

The Hardy–Ramanujan-type asymptotic formula does not directly yield the log-concavity property for the corresponding partition function. In contrast, the Rademacher-type exact formula provides refined error terms that can be employed to establish log-concavity. Motivated by this, we first establish a Rademacher-type exact formula and, as an application, prove the second-order Turán inequality for cubic overpartitions. Furthermore, by applying the above result of Griffin, Ono, Rolen, and Zagier, we establish higher-order Turán inequalities for the cubic overpartition function $\overline{a}(n)$.

The structure of the paper is as follows. In Section~\ref{main results}, we present the main findings of this paper.  Section~\ref{Prilim} compiles the preliminary results required for the proofs; in particular, we discuss properties of Farey sequences and Ford circles,  and we derive transformation formulas for $f(q^2)$ and $f(q^4)$, where 
$q = e^{ 2\pi i \left(\tfrac{h}{k} + \tfrac{iz}{k^2} \right)}, \ \tfrac{h}{k} \in \mathbb{Q}, \ \Re(z) > 0$. 
Section~\ref{proof of main result} is devoted to establishing the results stated in Section~\ref{main results}. In Section~\ref{verification table}, we provide numerical verification for Theorem~\ref{exact formula for overcubic partition theorem}. Finally, in the concluding remarks, we present a few observations and conjectures that may be of independent interest to the reader.

\section{Main results}\label{main results}
We begin this section with an exact formula for the cubic overpartition function $\overline{a}(n)$, which mirrors the classical Rademacher series for the partition function.
\begin{theorem}\label{exact formula for overcubic partition theorem}
Let $\overline{a}(n)$ be the number of cubic overpartitions of a non-negative integer $n$. Then, $\overline{a}(n)$ admits the following exact formula: 
\begin{align}\label{exact formula for overcubic partition equation}
\overline{a}(n) &= \frac{3\pi}{16n \sqrt{2}} \sum_{\substack{k=1 \\ k~\text{odd}}}^\infty \frac{A_k^{(1)}(n)}{k} I_2\left( \frac{\pi}{k} \sqrt{\frac{3n}{2}} \right) + \frac{\pi}{4n \sqrt{2}} \sum_{\substack{k=1 \\ k\equiv 2 ~(\textup{mod}~4)}} ^\infty \frac{A_k^{(2)}(n)}{k} I_2\left( \frac{\pi}{k} \sqrt{2n} \right),
\end{align}
where
\begin{align*}
A_k^{(1)}(n)&=\sum_{\substack{h=0 \\ (h,k)=1}}^{k-1}  e^{ \pi i \left(2s(h,k) + s(2h,k)-s(4h,k) \right) -2n \pi i \frac{h}{k} },\\
A_k^{(2)}(n)&=\sum_{\substack{h=0 \\ (h,k)=1}}^{k-1}  e^{ \pi i \left(2s(h,k) + s\left(h,\frac{k}{2}\right)-s\left(2h,\frac{k}{2}\right) \right)-2n \pi i \frac{h}{k}},
\end{align*}
and $I_{\nu}$ is the modified Bessel function of first kind and $s(h,k)$ is  the Dedekind sum defined in \eqref{dedekind_sum}.
\end{theorem}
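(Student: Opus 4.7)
The plan is to apply Rademacher's version of the circle method to the generating function
\begin{equation*}
F(q) := \sum_{n=0}^{\infty}\overline{a}(n) q^n = \frac{(q^4;q^4)_\infty}{(q;q)_\infty^2\,(q^2;q^2)_\infty} = \frac{f(q)^2 f(q^2)}{f(q^4)},
\end{equation*}
which follows from \eqref{generating function of cubic overpartition} by rewriting $(-q;q)_\infty = (q^2;q^2)_\infty/(q;q)_\infty$ and $(-q^2;q^2)_\infty = (q^4;q^4)_\infty/(q^2;q^2)_\infty$. Modularly, $F(q)$ is essentially the eta quotient $\eta(4\tau)/(\eta(\tau)^2\eta(2\tau))$ on $\Gamma_0(4)$, so its singular behavior at a cusp $h/k$ is governed by the value of $k$ modulo $4$. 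By Cauchy's theorem,
\begin{equation*}
\overline{a}(n) = \frac{1}{2\pi i}\oint_C F(q)\, q^{-n-1}\, dq
\end{equation*}
for any circle $C \subset \{|q|<1\}$ encircling the origin; following \cite{HRad1943}, the plan is to deform $C$ to the union of Ford arcs $\xi_{h,k}$ above the Farey fractions of order $N$, each parametrized by $q = e^{2\pi i(h/k + iz/k^2)}$ with $z$ on a short vertical segment.

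On each arc, the next step is to apply the transformation formulas for $f(q)$, $f(q^2)$, and $f(q^4)$ recorded in Section~\ref{Prilim} and to split into three cases according to the $2$-adic valuation of $k$. Under $q\mapsto q^2$ and $q\mapsto q^4$ the associated cusps have denominators $k/\gcd(2,k)$ and $k/\gcd(4,k)$, so the three transformations combine into distinct principal-part expressions when $k$ is odd, $k\equiv 2 \pmod 4$, and $k\equiv 0 \pmod 4$. In each case one obtains a singular factor $\exp(A_k/z)$ times a phase $\exp(i\pi\,\Phi_k(h))$ together with a bounded remainder, where $A_k$ is largest in the odd case, smaller in the $k\equiv 2 \pmod 4$ case, and strictly smaller still in the $k\equiv 0 \pmod 4$ case.

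For $k$ odd and $k\equiv 2 \pmod 4$, after stripping off the phase and the bounded error the integral over each arc becomes, in the limit $N\to\infty$, a Hankel-type contour integral that evaluates to a constant multiple of the modified Bessel function $I_2$. A careful tracking of the constants recovers the prefactors $\tfrac{3\pi}{16 n\sqrt 2}$ and $\tfrac{\pi}{4 n\sqrt 2}$, together with the Bessel arguments $\tfrac{\pi}{k}\sqrt{3n/2}$ and $\tfrac{\pi}{k}\sqrt{2n}$ stated in the theorem. The accumulated phase $\Phi_k(h)$ is then summed over $h \pmod k$ with $(h,k)=1$, producing exactly the Kloosterman-type sums $A_k^{(1)}(n)$ and $A_k^{(2)}(n)$. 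For $k \equiv 0 \pmod 4$, the strictly smaller singular exponent permits a uniform estimate showing that the total contribution from these arcs is $o(1)$ as $N\to\infty$, so they do not appear in the exact formula.

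The main obstacle will be the meticulous bookkeeping of the Dedekind-sum combinations that make up $\Phi_k(h)$. Each of the three eta transformations yields a Dedekind sum whose argument is determined by the reduced form of $h/k$, $2h/k$, or $4h/k$, carrying a specific sign inherited from the lift to $SL_2(\mathbb{Z})$, and one must verify that their composition collapses to precisely $2s(h,k) + s(2h,k) - s(4h,k)$ in the odd case and $2s(h,k) + s(h,k/2) - s(2h,k/2)$ in the $k \equiv 2 \pmod 4$ case. A secondary but delicate task is the uniform estimation of the non-principal parts of all three transformations as well as of the full contribution from the $k \equiv 0 \pmod 4$ arcs, needed to guarantee absolute convergence of the final Bessel series; this parallels Rademacher's classical analysis but must be carried out simultaneously for three eta factors of different levels.
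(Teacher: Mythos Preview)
Your proposal is correct and follows essentially the same route as the paper: Rademacher's circle method over Ford arcs, the eta transformation formulas split by $k \pmod 4$, the Hankel-contour evaluation yielding $I_2$, and the observation that the $k\equiv 0\pmod 4$ arcs carry no principal part and hence contribute only to the error. One minor slip: the singular coefficient of $1/z$ is actually $3\pi/16$ for $k$ odd and $\pi/4$ for $k\equiv 2\pmod 4$ (so the latter is larger, not smaller), and for $k\equiv 0\pmod 4$ it vanishes outright rather than being merely ``strictly smaller''---but this does not affect your plan, since both positive cases contribute main terms and the third contributes only error, exactly as you say.
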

\begin{remark}
 The formula \eqref{exact formula for overcubic partition equation} is structurally analogous to Rademacher’s exact formula for the ordinary partition function $p(n)$, with the primary difference being the presence of two distinct series corresponding to $k$ odd,  and $k\equiv 2   \pmod 4 $.  
\end{remark}
As a direct consequence of Theorem \ref{exact formula for overcubic partition theorem}, we  deduce an asymptotic formula of $\overline{a}(n)$ for large $n$ by analyzing the dominant contribution from the leading terms of the two series. This leads to the following asymptotic formula.
\begin{corollary}\label{corollary of asymptotic of cubic overpartition}
The cubic overpartition function $\overline{a}(n)$ 
satisfies the following estimate: 
\begin{align}\label{asymptotic for cubic}
\overline{a}(n) =\frac{3\pi}{16n\sqrt{2}} I_2\left(\pi \sqrt{\frac{3n}{2}} \right) + \mathcal{O}\left(\bigg( \frac{2\cdot 5^{\frac{5}{2}}}{\pi n^{\frac{7}{4}}} + \frac{1}{n^{\frac{5}{4}}} \bigg) e^{\pi \sqrt{\frac{n}{2}}} \right).
\end{align}
Further,  we have 
\begin{align}\label{asymptotic cubic overpartition}
\overline{a}(n) \sim \frac{3^{\frac{3}{4}}}{2^{\frac{19}{4}}n^{\frac{5}{4}}} e^{\pi \sqrt{\frac{3n}{2}}}, \quad \text{as} \quad  n \to \infty.
\end{align}
\end{corollary}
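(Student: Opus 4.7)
The plan is to start from the exact formula \eqref{exact formula for overcubic partition equation} of Theorem \ref{exact formula for overcubic partition theorem} and identify the dominant contribution, which will come from the $k=1$ term of the first series, while controlling every remaining term as an error of order $e^{\pi\sqrt{n/2}}$. First I would note that at $k=1$ the Dedekind sums $s(h,k)$ vanish identically, so $A_{1}^{(1)}(n) = 1$, and the corresponding summand is exactly the main piece $\frac{3\pi}{16n\sqrt{2}} I_2(\pi\sqrt{3n/2})$ appearing in \eqref{asymptotic for cubic}.

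Next I would bound the tail of the first sum over odd $k \geq 3$ and the entire second sum over $k \equiv 2 \pmod{4}$. Using the trivial estimate $|A_{k}^{(i)}(n)| \leq \varphi(k) \leq k$ together with a quantitative upper bound for the modified Bessel function (either the elementary inequality $I_\nu(x) \leq \frac{x^\nu e^x}{2^\nu \Gamma(\nu+1)}$ or a sharper explicit form), the relevant exponential factors become $e^{(\pi/k)\sqrt{3n/2}}$ in the first series and $e^{(\pi/k)\sqrt{2n}}$ in the second. For odd $k \geq 3$ in the first series the exponent is at most $\pi\sqrt{n/6} < \pi\sqrt{n/2}$, and after summing geometrically we obtain the $\mathcal{O}(n^{-7/4}e^{\pi\sqrt{n/2}})$ piece, the constant $\frac{2\cdot 5^{5/2}}{\pi}$ being forced by careful bookkeeping of the Bessel bound and the surviving sum over $k$. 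For the second series the $k=2$ term dominates with exponential exactly $e^{(\pi/2)\sqrt{2n}} = e^{\pi\sqrt{n/2}}$ and prefactor of order $n^{-5/4}$, matching the second piece of the error, while the $k \geq 6$ terms are absorbed into the first contribution.

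The asymptotic relation \eqref{asymptotic cubic overpartition} then follows from \eqref{asymptotic for cubic} by inserting the classical expansion $I_\nu(x) \sim \frac{e^x}{\sqrt{2\pi x}}$ as $x \to \infty$ into the main term:
\begin{align*}
\frac{3\pi}{16n\sqrt{2}} I_2\!\left(\pi\sqrt{\frac{3n}{2}}\right) \sim \frac{3\pi}{16n\sqrt{2}} \cdot \frac{e^{\pi\sqrt{3n/2}}}{\sqrt{2\pi^{2}\sqrt{3n/2}}} = \frac{3^{3/4}}{2^{19/4}\, n^{5/4}}\, e^{\pi\sqrt{3n/2}},
\end{align*}
and observing that the error in \eqref{asymptotic for cubic} is exponentially smaller, since $e^{\pi\sqrt{n/2}}/e^{\pi\sqrt{3n/2}} \to 0$ as $n\to\infty$.

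The main obstacle is pinning down the explicit numerical constants, especially $\frac{2\cdot 5^{5/2}}{\pi}$, which demands a tight quantitative Bessel function estimate on each range of $k$ together with a careful tail summation; the conceptual outline is a standard asymptotic extraction from a Rademacher-type series, but preserving fully explicit constants rather than an abstract $\mathcal{O}$-estimate is the delicate accounting step.
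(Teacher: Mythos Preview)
Your outline matches the paper's proof: isolate the $k=1$ term, bound the Kloosterman sums trivially by $k$, control the remaining Bessel tails, and finish with $I_\nu(x)\sim e^x/\sqrt{2\pi x}$. The one place where you are vague is exactly where the paper invokes a specific tool: rather than a geometric-type summation, the paper uses the tail estimate $\sum_{j=N+1}^{\infty} I_2(s/j)\le \tfrac{2N^2}{s}I_1(s/N)$ (stated in the preliminaries), applied with $N=2$ to the odd-$k$ series and with $N=5$ to the $k\equiv 2\pmod 4$ series after peeling off $k=2$. This is precisely what produces the constant $5^{5/2}$, so note that it originates from the second series tail (via $N=5$) rather than from the first series as you suggest; the first-series tail contributes a factor $6^{1/4}$ which is then absorbed into the $5^{5/2}$ term.
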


Before presenting the next result, we introduce a few variables which will be used frequently.  We define
\begin{align}\label{Definition of v}
v := \pi \sqrt{\frac{3n}{2}}, \quad
v^+ := \pi \sqrt{\frac{3(n+1)}{2}}, \quad
v^- := \pi \sqrt{\frac{3(n-1)}{2}}.
\end{align}
In our pursuit of establishing Tur\'{a}n inequality for cubic overpartitions, the following result plays an important role. 
\begin{theorem}\label{product of cubic overpartition bound}
For all $n \geq 2363$, we have 
\begin{align*}
\Upsilon_1(n) \leq \frac{\overline{a}(n+1)\overline{a}(n-1)}{(\overline{a}(n))^2} \leq \Upsilon_2(n),
\end{align*}
where
\begin{align*}
\Upsilon_1(n):= 1- \frac{9\pi^4}{16 v^3} + \frac{45 \pi^4}{16 v^4} -\frac{309}{v^5} - \frac{535}{v^6} - \frac{405 \pi^8}{2048 v^6} - \frac{729 \pi^{12}}{ v^6},
\end{align*}
\begin{align*}
\Upsilon_2(n):= 1- \frac{9\pi^4}{16 v^3} + \frac{45 \pi^4}{16 v^4} -\frac{308}{v^5} - \frac{286}{v^6} + \frac{81 \pi^8}{256 v^6} + \frac{729 \pi^{12}}{16 v^6}.  
\end{align*}

The next result gives the second order Tur\'{a}n inequality for $\overline{a}(n)$.  

\end{theorem}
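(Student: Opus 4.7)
The plan is to combine the exact formula of Theorem~\ref{exact formula for overcubic partition theorem} with the effective asymptotic expansion of the modified Bessel function $I_2$, and then carry out a careful $1/v$-expansion of the ratio $\overline{a}(n+1)\overline{a}(n-1)/\overline{a}(n)^2$ up to order $1/v^6$.

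First, I would split the exact formula as $\overline{a}(n) = T(n) + R(n)$, where
$$T(n) := \frac{3\pi}{16n\sqrt{2}}\, I_2(v)$$
is the $k=1$ summand of the first series, and $R(n)$ collects the $k \geq 3$ odd tail of the first series together with the entire second series over $k \equiv 2 \pmod 4$. Using the trivial estimate $|A_k^{(j)}(n)| \leq k$ together with a power-series majorant for $I_2$ at small argument and the bound $I_2(x) \leq e^x/\sqrt{2\pi x}$ for large argument, one reproduces the analysis behind Corollary~\ref{corollary of asymptotic of cubic overpartition} to obtain an explicit bound $|R(n)| \leq C(n)\, e^{\pi\sqrt{n/2}}$ for an elementary $C(n)$. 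Since $T(n)$ grows like $e^v/n^{5/4}$ with $v = \pi\sqrt{3n/2}$, the ratio $R(n)/T(n)$ is exponentially small in $\sqrt{n}$ and is easily absorbed into the $1/v^6$ slack built into $\Upsilon_1, \Upsilon_2$.

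Second, I would invoke the effective asymptotic expansion
$$I_2(v) = \frac{e^v}{\sqrt{2\pi v}}\Bigl(1 - \tfrac{15}{8v} + \tfrac{105}{128 v^2} + \cdots\Bigr),$$
with an explicit (alternating) remainder, giving sharp two-sided bounds on $I_2(v)$, $I_2(v^+)$, $I_2(v^-)$. Substituting these into the ratio, I would use the upper envelope of $\overline{a}(n\pm 1)$ over the lower envelope of $\overline{a}(n)^2$ to produce $\Upsilon_2(n)$, and reverse the choices for $\Upsilon_1(n)$. Writing $v^\pm = v\sqrt{1 \pm 1/n}$ and expanding each factor $(n\pm 1)^{-1}$ in powers of $1/n$ (equivalently $1/v^2$), the matching leading coefficients $-\tfrac{9\pi^4}{16 v^3}$ and $\tfrac{45\pi^4}{16 v^4}$ in both $\Upsilon_1$ and $\Upsilon_2$ emerge as the second difference of
$$\log T(n) = -\log n + v - \tfrac{1}{2}\log(2\pi v) - \tfrac{15}{8v} + O(1/v^2)$$
at $n-1, n, n+1$. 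The remaining contributions, including the $\pi^8$ and $\pi^{12}$ terms, come from carrying the Bessel expansion one step further and from the quadratic terms of $\log(1+x)$; it is operationally cleaner to expand $\log\bigl(T(n+1)T(n-1)/T(n)^2\bigr)$ rather than the ratio itself, since second differences of smooth functions admit clean power-series tails.

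The threshold $n \geq 2363$ arises from tracking when every intermediate inequality becomes valid: each error term is eventually monotonically decaying in $n$, so one pins down the largest $n$ at which any inequality fails and verifies it lies below $2363$. The main obstacle is precisely the book-keeping—ensuring that all contributions (from the Bessel remainder, the $1/n$ expansion of $v^\pm$, the tail $R(n)$, and the higher-order terms in $\log(1+x)$) are quantified with enough uniform precision that the explicit numerical constants $308, 309, 286, 535$ and the $\pi^8, \pi^{12}$ coefficients emerge correctly. Sign tracking in the alternating Bessel expansion and the choice of truncation order are where the computational care is required.
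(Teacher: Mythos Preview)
Your proposal is correct and follows essentially the same route as the paper: split off the $k=1$ main term $T(n)=M_{\overline{a}}(n)$, control the tail $R(n)/T(n)$ by an explicit power of $1/n$ (the paper packages this as Lemma~\ref{bounds for overcubic in terms of main term}, getting $|R(n)/T(n)|\le 1/n^6$ for $n\ge 393$, which is exactly the source of the $729\pi^{12}$ terms), and then bound the Bessel ratio $I_2(v^+)I_2(v^-)/I_2(v)^2$ via the truncated asymptotic expansion of $I_2$ together with the expansions of $v^{\pm}$ in powers of $1/v$ (the paper's Lemma~\ref{bounds for product of bessel functions}). The only stylistic difference is that you propose expanding $\log\bigl(T(n+1)T(n-1)/T(n)^2\bigr)$ and then exponentiating, whereas the paper works directly with products and uses the elementary two-sided bound $1+s<e^s<1+s+s^2$ for $s<0$ on $e^{v^++v^--2v}$; the $81\pi^8/256$ and $405\pi^8/2048$ constants come from that step rather than from a further term of the Bessel expansion, and the integer constants $308,309,286,535$ arise from reducing a rational-function inequality in $v$ to checking positivity of an explicit polynomial (this is where the threshold $n\ge 2363$ is pinned down).
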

\begin{theorem}\label{log concavity of cubic overpartitions}
For $n \geq 10$,  we have $\overline{a}^2(n) >  \overline{a}(n+1) \overline{a}(n-1)$. 
\end{theorem}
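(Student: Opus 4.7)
The plan is to exploit Theorem~\ref{product of cubic overpartition bound} to split the argument into an \emph{asymptotic} regime $n \geq 2363$, handled analytically, and a \emph{finite} regime $10 \leq n \leq 2362$, handled by direct computation. The target inequality is equivalent to $\overline{a}(n+1)\overline{a}(n-1)/\overline{a}(n)^2 < 1$, and the upper bound $\Upsilon_2(n)$ supplied by Theorem~\ref{product of cubic overpartition bound} reduces the asymptotic case to verifying the single closed-form inequality $\Upsilon_2(n) < 1$.

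For the asymptotic step, I would set $v = \pi\sqrt{3n/2}$ as in \eqref{Definition of v} and clear the common denominator $16v^6 > 0$; this turns $\Upsilon_2(n) < 1$ into the polynomial inequality
\begin{equation*}
9\pi^4 v^3 \;>\; 45\pi^4 v^2 - 4928\,v - 4576 + \frac{81\pi^8}{16} + 729\pi^{12}.
\end{equation*}
Since $v^2 = 3\pi^2 n/2$, the left-hand side grows like $n^{3/2}$ while the right-hand side is at most linear in $v$ (hence of order $n^{1/2}$) plus a universal constant, so the inequality is evident for $n$ large. To certify the threshold precisely, I would check the boundary $n = 2363$, where $v \approx 187.05$ and the left-hand side ($\approx 5.7 \times 10^9$) already exceeds the right ($\approx 8.3 \times 10^8$) by nearly an order of magnitude. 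Moreover, the derivative in $v$ of the difference (left $-$ right) is the quadratic $27\pi^4 v^2 - 90\pi^4 v + 4928$, which is positive for every $v$ beyond its larger root at approximately $2.6$, and in particular for all $v \geq 187$. Hence the inequality propagates monotonically to every $n \geq 2363$.

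For the finite regime $10 \leq n \leq 2362$, the values $\overline{a}(n)$ can be computed directly from the generating function \eqref{generating function of cubic overpartition} by multiplying the relevant truncated $q$-Pochhammer products, which is essentially instantaneous in any computer algebra system. Tabulating $\overline{a}(n)^2 - \overline{a}(n+1)\overline{a}(n-1)$ across this range and verifying positivity at every step completes the proof, and a parallel check at $n \leq 9$ should exhibit the first failure, confirming sharpness of the cut-off. The main obstacle I anticipate is organisational rather than analytic: because the threshold $2363$ inherited from Theorem~\ref{product of cubic overpartition bound} is rather large, the numerical sweep must be carried out in exact or sufficiently high-precision arithmetic so that no spurious cancellation can occur near the transition, but no delicate new idea beyond that is required.
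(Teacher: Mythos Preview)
Your proposal is correct and mirrors the paper's own argument almost exactly: the paper likewise applies the upper bound $\Upsilon_2(n)$ from Theorem~\ref{product of cubic overpartition bound}, rewrites $1-\Upsilon_2(n)>0$ as positivity of the cubic $g_3(v)=\tfrac{9\pi^4}{16}v^3-\tfrac{45\pi^4}{16}v^2+308v+286-\tfrac{81\pi^8}{256}-\tfrac{729\pi^{12}}{16}$, checks that $g_3'(v)>0$ past its larger root $\approx 2.62$ and that $g_3$ is positive at a single test value, and then closes the range $10\le n\le 2362$ by direct Mathematica computation. The only cosmetic difference is that the paper records $g_3(93.28)>0$ (hence $n\ge 588$ already suffices), whereas you verify at the threshold $n=2363$ itself; either way the asymptotic regime is covered.
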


More generally,  we prove the following result,  which in turn gives higher Tur\'{a}n inequalities for cubic overpartitions.

\begin{theorem}\label{hyperbolicity of Jensen polynomial}
For any positive integer $d \geq 3$, $J_{\overline{a}}^{d,n}(X)$ is hyperbolic for all but finitely many values of n.
\end{theorem}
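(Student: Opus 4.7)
The approach is to apply Theorem~\ref{theorem of Griffin ono rolen and zagier} of Griffin, Ono, Rolen, and Zagier to $\alpha(n) = \overline{a}(n)$. Concretely, I need to produce a sufficiently precise asymptotic expansion of $\overline{a}(n)$ so that $\log(\overline{a}(n+j)/\overline{a}(n))$ takes the required form, with $\delta(n) \to 0$, $g_i(n) = o(\delta(n)^i)$ for $3 \le i \le d$, and truncation error $o(\delta(n)^d)$.

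First, I would refine Corollary~\ref{corollary of asymptotic of cubic overpartition} by substituting the full Bessel asymptotic
$$I_2(z) = \frac{e^z}{\sqrt{2\pi z}} \sum_{m=0}^{M-1} \frac{(-1)^m (2,m)}{(2z)^m} + O\!\left(\frac{e^z}{z^{M+1/2}}\right)$$
into the $k=1$ contribution of Theorem~\ref{exact formula for overcubic partition theorem} with $z = \pi\sqrt{3n/2}$. The remaining terms are $O(e^{\pi\sqrt{n/2}})$, exponentially smaller than the main term, so for any fixed $M$ I obtain
$$\overline{a}(n) = \frac{3^{3/4}}{2^{19/4}\, n^{5/4}}\, e^{\pi \sqrt{3n/2}} \left( 1 + \sum_{m=1}^{M} \frac{b_m}{n^{m/2}} + O\!\left(n^{-(M+1)/2}\right) \right).$$
Taking logarithms and expanding in $j$ through the binomial series $\sqrt{n+j} = \sum_{i \ge 0} \binom{1/2}{i} n^{1/2-i} j^i$, together with $\log(1 + j/n) = \sum_{i \ge 1} (-1)^{i+1} j^i/(i\, n^i)$ and the analogous expansions of the $n^{-m/2}$ corrections, yields a polynomial expression for $\log(\overline{a}(n+j)/\overline{a}(n))$ of any desired degree, with a uniform remainder for $0 \le j \le d$.

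Reading off the leading coefficients gives
$$A(n) = \frac{\pi \sqrt{3/2}}{2\sqrt n} \bigl(1 + O(n^{-1/2})\bigr), \qquad \delta(n)^2 = \frac{\pi \sqrt{3/2}}{8\, n^{3/2}} \bigl(1 + O(n^{-1/2})\bigr),$$
so $\delta(n) = \Theta(n^{-3/4}) \to 0$, while the coefficient $g_i(n)$ of $j^i$ for $i \ge 3$ is of order $n^{1/2-i}$. Since $n^{1/2-i}/n^{-3i/4} = n^{1/2 - i/4} \to 0$ for $i \ge 3$, the condition $g_i(n) = o(\delta(n)^i)$ holds; choosing $M$ large enough makes the tail error $o(\delta(n)^d)$. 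Theorem~\ref{theorem of Griffin ono rolen and zagier} then yields
$$\frac{\delta(n)^{-d}}{\overline{a}(n)} J_{\overline{a}}^{d,n}\!\left(\frac{\delta(n) X - 1}{e^{A(n)}}\right) \longrightarrow H_d(X)$$
uniformly on compact subsets of $\mathbb{R}$. The Hermite polynomial $H_d$ has $d$ distinct real roots; since this property is open in the coefficient topology and is preserved under the affine substitution in the argument, $J_{\overline{a}}^{d,n}(X)$ is hyperbolic for all sufficiently large $n$, and therefore for all but finitely many~$n$.

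The main technical obstacle is carrying out the first step with enough uniformity: one must justify the refined asymptotic expansion to arbitrary order in $n^{-1/2}$ and track the error terms through the logarithm and Taylor expansion in $j$, verifying that they remain $o(n^{-3d/4})$. The exponential separation between the main term and the remaining Kloosterman--Bessel contributions in Theorem~\ref{exact formula for overcubic partition theorem} makes the tail contribution routine; the real bookkeeping lies in combining the standard Bessel asymptotics with the polynomial-order corrections.
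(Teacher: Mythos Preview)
Your proposal is correct and follows essentially the same route as the paper: both expand $\log\bigl(\overline{a}(n+j)/\overline{a}(n)\bigr)$ from the asymptotic for $\overline{a}(n)$, identify $A(n)$, $\delta(n)$, and the $g_i(n)$, verify $g_i(n)=o(\delta(n)^i)$, and then invoke Theorem~\ref{theorem of Griffin ono rolen and zagier}. The only difference is one of rigor rather than strategy: the paper works directly with the leading asymptotic $\overline{a}(n)\sim 3^{3/4}2^{-19/4}n^{-5/4}e^{\pi\sqrt{3n/2}}$ and reads off $A(n)=\tfrac{\pi}{2}\sqrt{3/(2n)}-\tfrac{5}{4n}$, $\delta(n)^2=\tfrac{\pi}{8}\sqrt{3/(2n^3)}-\tfrac{5}{8n^2}$, whereas you push the Bessel expansion to arbitrary order $M$ in order to make the $o(\delta(n)^d)$ remainder explicit, which is a welcome tightening of the argument.
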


Bessenrodt and Ono \cite{BO16},  in 2016,  proved the following log-subadditivity result for the ordinary partition function i.e.,   for any integers $n,  m >1$ with $n+m>8$,  the inequality 
\begin{align*}
p(n) p(m) \geq p(n+m),  
\end{align*}
is true and the equality holds only when $\{n,  m\}=\{2,7\}$.  Inspired from this result,  we state the following log-subadditivity result for the cubic overpartition function. 
\begin{theorem}\label{subadditivity for overcubic partitions}
 For any $n,m \geq 1$ and  $\{n,  m\} \neq \{1,1\},  \{1,3\}$, we have
 \begin{align*}
 \overline{a}(n) \overline{a}(m) \geq \overline{a}(n+m).  
 \end{align*}
Moreover,  the equality holds only when $\{n,  m\}=\{1,2\}$.  
\end{theorem}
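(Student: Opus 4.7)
The plan is to follow the Bessenrodt--Ono template \cite{BO16}, adapted to cubic overpartitions (cf.\ \cite{cubicturan} for cubic partitions). Without loss of generality assume $1 \le n \le m$. The key analytic input is Corollary~\ref{corollary of asymptotic of cubic overpartition}; combined with the classical large-argument expansion of $I_2$, it yields explicit constants $c_1, c_2 > 0$ and an integer $N_0$ such that
\begin{equation*}
c_1 \, \frac{e^{\pi\sqrt{3N/2}}}{N^{5/4}} \; \le \; \overline{a}(N) \; \le \; c_2 \, \frac{e^{\pi\sqrt{3N/2}}}{N^{5/4}} \qquad (N \ge N_0).
\end{equation*}

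For $N_0 \le n \le m$, plugging these bounds into the ratio $\overline{a}(n)\overline{a}(m)/\overline{a}(n+m)$ gives
\begin{equation*}
\frac{\overline{a}(n)\overline{a}(m)}{\overline{a}(n+m)} \; \ge \; \frac{c_1^2}{c_2}\left(\frac{n+m}{nm}\right)^{5/4} \exp\!\bigl(\pi\sqrt{\tfrac{3}{2}}\,(\sqrt{n}+\sqrt{m}-\sqrt{n+m})\bigr).
\end{equation*}
The identity $(\sqrt{n}+\sqrt{m})^2 - (n+m) = 2\sqrt{nm} \ge 2n$ shows the exponent is bounded below by $\pi\sqrt{3/2}\,(2-\sqrt{2})\sqrt{n}$ (attained at $n=m$), so the lower bound grows super-polynomially in $n$, disposing of all pairs with $n \ge N_1$ for some explicit $N_1 \ge N_0$. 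For fixed $n$ with $1 \le n < N_1$, the same two-sided bounds imply $\overline{a}(n+m)/\overline{a}(m) \to 1$ as $m \to \infty$, yielding a threshold $M_n$ beyond which $\overline{a}(n)\overline{a}(m) \ge \overline{a}(n+m)$. The delicate case is $n=1$: since $\overline{a}(1)=2$, the inequality reduces to $\overline{a}(m+1)/\overline{a}(m) \le 2$, and although this ratio tends to $1$ from above, it exceeds $2$ only at $m=1$ and $m=3$. The remaining pairs $(n,m)$ with $n < N_1$ and $m < M_n$ are verified by direct expansion of \eqref{generating function of cubic overpartition}; the exceptions $\{1,1\}, \{1,3\}$ and the equality case $\{1,2\}$ surface at this step, visible from $\overline{a}(1)=2,\ \overline{a}(2)=6,\ \overline{a}(3)=12,\ \overline{a}(4)=26$.

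The hard part is making the $O$-term of Corollary~\ref{corollary of asymptotic of cubic overpartition} fully explicit with sharp enough constants to keep $N_0, N_1$ and the ensuing finite verification manageable, since the error aggregates contributions from the two Kloosterman-weighted series of Theorem~\ref{exact formula for overcubic partition theorem} and from the asymptotic expansion of $I_2$. A complementary tool, in the spirit of Theorem~\ref{product of cubic overpartition bound}, is an eventual monotonicity statement for $\overline{a}(m+1)/\overline{a}(m)$; this would minimise the range needed in the delicate $n=1$ analysis and in the overall finite numerical check.
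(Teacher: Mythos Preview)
Your plan is correct and mirrors the paper's own proof almost exactly: both follow the Bessenrodt--Ono template, deriving explicit two-sided bounds of the shape $c_1 e^{\pi\sqrt{3N/2}}/N^{5/4} \le \overline{a}(N) \le c_2 e^{\pi\sqrt{3N/2}}/N^{5/4}$ from the Rademacher formula, then exploiting the growth of $\sqrt{n}+\sqrt{m}-\sqrt{n+m}$ to reduce to a finite check, with the case $n=1$ handled separately via $\overline{a}(m+1)/\overline{a}(m)\le 2$. The paper carries out precisely the ``hard part'' you flag: it supplies the explicit constants via Lemma~\ref{bounds for overcubic for subadditivity} (the $\alpha=\tfrac12$ variant of Lemma~\ref{bounds for overcubic in terms of main term}) together with the sharper Bessel bounds \eqref{improved lower bound for I2(s)}--\eqref{improved upper bound for I2(s)}, obtaining $\overline{a}(n)\ge \tfrac{3^{3/4}}{2^{19/4}n^{5/4}}e^{\pi\sqrt{3n/2}}(1-\tfrac{8}{5\sqrt{n}})$ and $\overline{a}(n)\le \tfrac{3^{3/4}}{2^{19/4}n^{5/4}}e^{\pi\sqrt{3n/2}}(1+\tfrac{1}{\sqrt{n}})$ for all $n\ge 1$; it then parametrizes via $n=Bm$ and uses monotonicity of $L_{\overline{a}}(B)$ and $H_{\overline{a}}(B)$ in $B$ (rather than your direct bound $\sqrt{n}+\sqrt{m}-\sqrt{n+m}\ge (2-\sqrt2)\sqrt{n}$) to localize the finite verification to $3\le m\le 7$ with explicit thresholds $B_m$, plus the two ratio inequalities $2\overline{a}(n)>\overline{a}(n+1)$ and $4\overline{a}(n)>\overline{a}(n+2)$ for the cases $m=1,2$.
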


In 2010,  Chen \cite{ChenTalk2010} conjectured that the partition function $p(n)$ satisfies the generalized log-concavity property, namely, for integers $n>m>1$,
\begin{align*}
p(n)^2 >  p(n-m)p(n+m).
\end{align*}
This conjecture was later proved by DeSalvo and Pak \cite{DeSalvo2015}. Motivated by this result,  we prove an analogous result for the cubic overpartition function $\overline{a}(n)$ as follows:
\begin{theorem}\label{general log concavity}
For all $n>m > 1$,  we have
\begin{align*}
\overline{a}(n)^2 >  \overline{a}(n-m)\overline{a}(n+m).
\end{align*}
\end{theorem}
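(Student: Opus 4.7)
The plan is to split the range $n > m > 1$ into two regimes based on the size of $n-m$, combining the second-order Tur\'an inequality from Theorem~\ref{log concavity of cubic overpartitions} with the explicit asymptotic in Corollary~\ref{corollary of asymptotic of cubic overpartition}.

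For the first regime, when $n - m \geq 9$, Theorem~\ref{log concavity of cubic overpartitions} gives $\overline{a}(k)^2 > \overline{a}(k-1)\overline{a}(k+1)$ for all $k \geq 10$, which is equivalent to strict monotonic decrease of the ratio $r(k) := \overline{a}(k+1)/\overline{a}(k)$ on the set $\{9, 10, 11, \ldots\}$. Using the telescoping identities
$$\frac{\overline{a}(n+m)}{\overline{a}(n)} = \prod_{j=0}^{m-1} r(n+j) \qquad \text{and} \qquad \frac{\overline{a}(n)}{\overline{a}(n-m)} = \prod_{j=0}^{m-1} r(n-m+j),$$
and observing that both $n-m+j$ and $n+j$ lie in the strict decrease range for every $0 \leq j \leq m-1$ whenever $n-m \geq 9$, I get $r(n+j) < r(n-m+j)$ term by term, and multiplying the $m$ strict inequalities yields $\overline{a}(n)^2 > \overline{a}(n-m)\overline{a}(n+m)$.

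For the second regime, $1 \leq n-m \leq 8$, set $k := n - m$ so the claim becomes $\overline{a}(k+m)^2 > \overline{a}(k)\overline{a}(k+2m)$ for each fixed $k \in \{1, \ldots, 8\}$ and $m > \max(1, k)$. Fix $k$ and choose an explicit cutoff $M_k$. For $2 \leq m \leq M_k$, verify the finitely many instances by direct numerical computation of $\overline{a}$ from the generating function \eqref{generating function of cubic overpartition}. For $m > M_k$, apply Corollary~\ref{corollary of asymptotic of cubic overpartition}: taking logarithms and inserting the explicit error, the inequality reduces to
$$\pi\sqrt{3/2}\bigl(2\sqrt{k+m} - \sqrt{k+2m}\bigr) > \frac{5}{2}\log(k+m) - \frac{5}{4}\log(k+2m) + \log \overline{a}(k) + E(m),$$
where $E(m)$ collects the explicit Rademacher-type error contributions. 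The left-hand side grows like $(2-\sqrt{2})\pi\sqrt{3m/2}$ by strict concavity of $\sqrt{x}$, which dominates the $O(\log m)$ right-hand side once $m$ is large enough. The main obstacle lies precisely here: for each of the eight values of $k$, one must calibrate $M_k$ small enough that the finite-check step is computationally feasible, yet large enough that the Rademacher error from Corollary~\ref{corollary of asymptotic of cubic overpartition} is provably dominated by the main-term concavity gap for all $m > M_k$. Once these cutoffs are fixed and both steps carried out for $k = 1, 2, \ldots, 8$, the two regimes exhaust all pairs $n > m > 1$ and the proof is complete.
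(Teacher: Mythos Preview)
Your proposal is correct and follows the same two-regime split as the paper. The first regime is identical in substance: the paper invokes the standard equivalence of log-concavity and strong log-concavity, and your telescoping argument with $r(k)=\overline{a}(k+1)/\overline{a}(k)$ is precisely the explicit form of that equivalence. In the second regime the paper takes a slightly cleaner route: instead of handling each value of $k=n-m$ separately, it uses monotonicity to sandwich all cases $1\le n-m\le 9$ by the chain
\[
\overline{a}(n)^2\ge \overline{a}(m+1)^2 > \overline{a}(9)\,\overline{a}(9+2m)\ge \overline{a}(n-m)\,\overline{a}(n+m),
\]
so only the single middle inequality in $m$ needs the explicit exponential bounds (those of \eqref{lower bound for a(n) for subadditivity}--\eqref{upper bound for a(n) for subadditivity}) and a finite check for small $m$. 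Your per-$k$ treatment works equally well but requires eight separate calibrations of the cutoff $M_k$, whereas the paper's chain collapses them to one. One small slip: your stated range ``$m>\max(1,k)$'' is wrong (there is no constraint $m>k$; only $m>1$ is needed), though you correctly proceed from $m\ge 2$ in the next sentence.
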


\section{Preliminaries}\label{Prilim}
In order to apply the circle method to derive an exact formula for the cubic overpartition function $\overline{a}(n)$, it is essential to carefully choose an appropriate path of integration in the complex plane. This path is typically constructed using Farey fractions and Ford circles. In this section, we briefly recall these classical concepts from the work of Hardy-Ramanujan \cite{HR1918} and Rademacher \cite{HRad1973}.

\subsection{Farey sequences and Ford circles} \label{Ford circles}
A Farey sequence of order $n$, denoted as $F_n$, is an ascending sequence of reduced fractions in $[0,1]$, whose denominator do not exceed $n$. Formally,  one can write
\begin{align*}
F_n = \left\{ \frac{h}{k} \in \mathbb{Q} \cap [0,1] ~\middle|~ 0 \leq h \leq k \leq n,~ \gcd(h,k)=1 \right\}.
\end{align*} 
We can associate a geometrical object to each Farey fraction $\frac{h}{k}$,  known as the Ford circle, denoted by $C(h,k)$,  in the complex plane with centre at $\left(\frac{h}{k}, \frac{1}{2k^2}\right)$ and radius $\frac{1}{2k^2}$. That is,  
\begin{align}\label{Ford circle C(h,k)}
C(h,k) := \left\{ z \in \mathbb{C} ~\Big|~ \left| z - \left( \frac{h}{k} + \frac{i}{2k^2} \right) \right| = \frac{1}{2k^2} \right\}.  
\end{align}
Several properties of Ford circles are noteworthy to mention.    The ford circle $C(h,k)$ lies entirely in the upper half plane and the real axis  acts as a tangent at $( \frac{h}{k}, 0)$.  No two Ford circles intersect.   One can show that if $\frac{h_1}{k_1} < \frac{h}{k} < \frac{h_2}{k_2}$ are three adjacent fractions in a Farey sequence, then $C(h,k)$ will be tangent to $C(h_1,k_1)$ and $C(h_2,k_2)$. Let $\alpha_1$ and $\alpha_2$ be the points of tangency.
We can easily show that 
\begin{align*}
\alpha_1= \frac{h}{k} - \frac{k_1}{k(k^2 + k_1^2)} + i \frac{1}{k^2 + k_1^2},  \quad \alpha_2 =  \frac{h}{k} + \frac{k_2}{k(k^2 + k_2^2)} + i \frac{1}{k^2 + k_2^2}.  
\end{align*}
The Farey sequence provides a systematic way to dissect the unit interval into subintervals corresponding to rational cusps, while Ford circles are non-overlapping regions associated to each cusp. While applying the Hardy-Ramanujan-Rademacher circle method for the generating function of cubic overpartitions,  we choose Rademacher's path of
 contour integration in which the path is carefully chosen to follow arcs of Ford circles up to a height determined by the Farey sequence of order $N$. This decomposition of the path of integration allows to find precise approximations of the generating function near each cusp, which is the cornerstone of deriving an exact formula for $\overline{a}(n)$. 

We now state an important result about Dedekind sums, which will be a key ingredient in our derivation of the transformation formula.
\begin{lemma}\label{important_result_for_dedekind_sum}
If $h_1$ is an integer such that $hh_1 \equiv 1 (\textup{mod}~ k)$, then $s(h,k) = s(h_1,k) $.
\end{lemma}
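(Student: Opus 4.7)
The plan is to recast the Dedekind sum in terms of the sawtooth function and then use the bijection $r \mapsto h_1 r \pmod{k}$ to match the two sums term by term. Define the sawtooth function $((x)) := x - \lfloor x \rfloor - \tfrac12$ when $x \notin \mathbb{Z}$ and $((x)) := 0$ when $x \in \mathbb{Z}$. This function is odd, $1$-periodic, and vanishes at integers. For $1 \le r \le k-1$ the argument $r/k$ is never an integer, so $((r/k)) = r/k - \tfrac12$, and hence the definition \eqref{dedekind_sum} rewrites cleanly as
\begin{align*}
s(h,k) = \sum_{r=1}^{k-1} \frac{r}{k}\left(\frac{hr}{k} - \left\lfloor \frac{hr}{k} \right\rfloor - \frac{1}{2}\right) = \sum_{r=1}^{k-1} \left(\left(\frac{r}{k}\right)\right)\left(\left(\frac{hr}{k}\right)\right),
\end{align*}
where I have inserted the missing $-1/2$ in the first factor at no cost because $\sum_{r=1}^{k-1} (-1/2)((hr/k)) = 0$ by the standard fact that $\sum_{r \bmod k} ((hr/k)) = 0$ when $\gcd(h,k)=1$.

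Next I would observe that the hypothesis $hh_1 \equiv 1 \pmod{k}$ forces $\gcd(h_1,k)=1$, so multiplication by $h_1$ permutes the nonzero residue classes modulo $k$. Performing the substitution $r \mapsto h_1 r \pmod{k}$ in the symmetric form above gives
\begin{align*}
s(h,k) = \sum_{r=1}^{k-1} \left(\left(\frac{h_1 r}{k}\right)\right)\left(\left(\frac{h h_1 r}{k}\right)\right),
\end{align*}
using that $((x))$ is $1$-periodic so only the residue of $h_1 r \pmod{k}$ matters.

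Finally, the congruence $hh_1 \equiv 1 \pmod{k}$ yields $hh_1 r/k \equiv r/k \pmod{1}$, and periodicity gives $((hh_1 r/k)) = ((r/k))$. Substituting this and comparing with the symmetric form of $s(h_1,k)$ produces $s(h,k) = \sum_{r=1}^{k-1} ((r/k))((h_1 r/k)) = s(h_1,k)$, which is the claim. There is no serious obstacle here; the only point requiring a little care is the passage to the symmetric sum-of-sawtooth form, which is where the identity $\sum_{r \bmod k}((hr/k))=0$ (for $\gcd(h,k)=1$) is used to absorb the constant term $-\tfrac12$ in the first factor.
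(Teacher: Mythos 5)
Your proof is correct. The paper itself only cites \cite[Equation (68.5)]{IK2004} for this lemma, and your argument --- rewriting $s(h,k)$ in the symmetric sawtooth form $\sum_{r=1}^{k-1}((r/k))((hr/k))$ (using $\sum_{r \bmod k}((hr/k))=0$ to absorb the $-\tfrac12$), then reindexing by the permutation $r \mapsto h_1 r \pmod k$ and invoking $hh_1 \equiv 1 \pmod k$ --- is precisely the standard proof found in such references, with all the small points (periodicity of $((x))$, coprimality of $h_1$ and $k$) handled correctly.
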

\begin{proof}
See \cite[Equation (68.5)]{IK2004} for a proof this result.
\end{proof}

Next, we establish transformation formula for $f(q^2)$ and $f(q^4)$, with $q= e^{ 2\pi i \left(\frac{h}{k} + \frac{iz}{k^2} \right)}$,  $\frac{h}{k} \in \mathbb{Q},  \Re(z)>0$,  which will play crucial role in obtaining the exact formula for cubic overpartitions.   

It is well-known that the Dedekind eta function $n(\tau)$ has a close connection with the partition generating function $f(q)$.  Mainly,   for $\tau \in \mathbb{H}$,  it is defined by
\begin{align}\label{relation between f and eta}
\eta (\tau) = e^{ \frac{ i \pi \tau}{12}}  \prod_{m=1}^{\infty}(1-e^{2\pi i m \tau}) = \frac{e^{ \frac{i \pi \tau}{12}}}{f(e^{2\pi i \tau})}.  
\end{align}
To find a transformatian formula for $f(q^2)$,  we replace $\tau$ by $\frac{2}{\gamma} \left( \frac{h}{k}+ \frac{iz}{k}\right)$ in \eqref{relation between f and eta} for any $\gamma \in \{1,2\}$.  Thus,  we have
\begin{align}\label{relation of f and eta for f(q^2)}
f\left(e^{\frac{4\pi i}{\gamma} \left(\frac{h}{k} + \frac{iz}{k} \right)}\right) = e^{\frac{i \pi }{6 \gamma} \left(\frac{h}{k} + \frac{iz}{k} \right) } \eta^{-1} \left( \frac{2}{\gamma}  \left(\frac{h}{k} + \frac{iz}{k} \right) \right).
\end{align}
Now we state a transformation formula for $\eta(\tau)$,  namely,  for  $\tau' \in \mathbb{H}$,  
\begin{align}\label{transformation_formula_of_eta}
\eta\left(\frac{a\tau' +b}{c \tau' +d}\right) = \epsilon(a,b,c,d) \sqrt{\frac{c\tau' +d}{i}} \eta(\tau'), \quad \forall \begin{bmatrix}
a & b \\ c & d 
\end{bmatrix} \in SL_2(\mathbb{Z}),  
\end{align}
where $\epsilon (a,b,c,d)$ is defined as 
\begin{align*}
\epsilon (a,b,c,d) = \exp \Bigg( \frac{i \pi }{12}\Phi \begin{bmatrix}
a & b \\ c & d 
\end{bmatrix} \Bigg),
\end{align*}
with
\begin{align*}
\Phi \begin{bmatrix}
a & b \\ c & d 
\end{bmatrix} = \begin{cases} 
b + 3, ~~& \mathrm{for} ~~ c=0,  d=1, \\
-b - 3, ~~& \mathrm{for} ~~ c=0,  d=-1,   \\
 \frac{a+d}{c}-12~~ \mathrm{sign}(c)  s(d,|c|),~~ & \mathrm{for} ~~ c\neq 0,
\end{cases}
\end{align*} 
and $s(h,k)$ is the Dedekind sum defined in \eqref{dedekind_sum}.  
To utilize the transformation formula \eqref{transformation_formula_of_eta} for $\eta(\tau)$, we need to have
\begin{align*}
\frac{a\tau' +b}{c \tau' +d}=\frac{2}{\gamma} \left(\frac{h}{k} + \frac{iz}{k} \right),
\end{align*}
which suggest that,  one should consider $\tau' = \frac{h'}{k} + \frac{i}{2 kz}$ for some $h' \in \mathbb{Z}$ and 
\begin{align}\label{values of abcd for f(q^2)}
\begin{bmatrix}
a & b \\ c & d
\end{bmatrix} = \begin{bmatrix}
\frac{2}{\gamma}h & -\frac{\frac{2}{\gamma} hh'+1}{k} \\ k & -h'
\end{bmatrix}.  
\end{align}
In order for $\frac{\frac{2}{\gamma} hh'+1}{k}$ to be an integer, we impose the condition that $\frac{2}{\gamma} hh' \equiv -1(\mathrm{mod}~k)$. Note that $c=k\neq0$ implies that $\mathrm{sign}(k) = 1$. Therefore,  we have
\begin{align}\label{epsilon (a,b,c,d) for f(q^2)}
\epsilon (a,b,c,d)=\exp \left(\frac{i\pi}{12}\left(\frac{\frac{2}{\gamma} h-h'}{k}-12s(-h',k)\right) \right).
\end{align}
Now we make use of Lemma \ref{important_result_for_dedekind_sum} to see that $s(-h',k) = s\left(\frac{2 h}{\gamma}, k\right)$. Hence,  utilizing \eqref{transformation_formula_of_eta}, \eqref{values of abcd for f(q^2)} and \eqref{epsilon (a,b,c,d) for f(q^2)} in \eqref{relation of f and eta for f(q^2)},  we get
\begin{align*}
f\left(e^{\frac{4\pi i}{\gamma} \left(\frac{h}{k} + \frac{iz}{k} \right)}\right) = \sqrt{\frac{2 z}{\gamma}} e^{i\pi s\left(\frac{2 h}{\gamma},  k\right) + \frac{\pi}{12k}\left(\frac{\gamma}{2 z}-\frac{2 z}{\gamma}\right)}f\left(e^{2\pi i \left(\frac{h'}{k}+ \frac{i\gamma}{2 kz}\right)}\right).
\end{align*}
Now,  replace $k$ by $\frac{k}{\gamma}$ and then $z$ by $\frac{z}{k}$ to have
\begin{align*}
f\left(e^{4\pi i \left(\frac{h}{k} + \frac{iz}{k^2} \right)}\right) = \sqrt{\frac{2 z}{k \gamma}} e^{i\pi s\left(\frac{2 h}{\gamma},\frac{k}{\gamma}\right) + \frac{\pi}{12k}\left(\frac{\gamma^2k}{2 z}-\frac{2 z}{k}\right)}f\left(e^{2\pi i \gamma\left(\frac{h'}{k}+ \frac{i\gamma}{2 z}\right)}\right),  
\end{align*}
where $\frac{2}{\gamma} hh' \equiv -1(\mathrm{mod}~\frac{k}{\gamma})$. As $\gamma \in \{1,2\}$, the transformation formula for $f(q^2)$ becomes
\begin{align}\label{transformation formula for f(q^2)}
f\left(e^{4 \pi i \left(\frac{h}{k} + \frac{iz}{k^2}\right)}\right) = 
\begin{cases}
\sqrt{\frac{2z}{k}} e^{i\pi s(2h,k)} e^{ \frac{\pi}{12k} \left(\frac{k}{2z} - \frac{2z}{k} \right)}f\left(e^{2\pi i \left(\frac{h_2}{k}+ \frac{i}{2z}\right)}\right),~~&\mathrm{if}~~ k~\mathrm{odd},\\
\sqrt{\frac{z}{k}} e^{i\pi s(h,\frac{k}{2})} e^{\frac{\pi}{6k}\left(\frac{k}{z}-\frac{z}{k}\right)}f\left(e^{4\pi i \left(\frac{h_3}{k}+ \frac{i}{z}\right)}\right),~~&\mathrm{if}~~k~\mathrm{even},
\end{cases}
\end{align}
where $2hh_2 \equiv -1$ (mod $k$) and $hh_3 \equiv -1$ (mod $\frac{k}{2}$).

In a similar way we find the transformation formula for $f(q^4)$.
For any $\delta \in \{1,  2,  4 \}$,   we replace $\tau$ by $\frac{4}{\delta} \left( \frac{h}{k}+ \frac{iz}{k}\right)$ in \eqref{relation between f and eta} to see that 
\begin{align}\label{relation of f and eta for blr(n)}
f\left(e^{\frac{8\pi i }{\delta} \left(\frac{h}{k} + \frac{iz}{k} \right)}\right) = e^{\frac{i \pi }{3 \delta} \left(\frac{h}{k} + \frac{iz}{k} \right) } \eta^{-1} \left( \frac{4}{\delta}  \left(\frac{h}{k} + \frac{iz}{k} \right) \right).
\end{align}
To utilize \eqref{transformation_formula_of_eta}, we write
\begin{align*}
\frac{a\tau' +b}{c \tau' +d}=\frac{4}{\delta} \left(\frac{h}{k} + \frac{iz}{k} \right),
\end{align*}
which indicates that,   $\tau' = \frac{h''}{k} + \frac{i}{4 kz}$ for some $h'' \in \mathbb{Z}$ and 
\begin{align}\label{values of abcd for blr(n)}
\begin{bmatrix}
a & b \\ c & d
\end{bmatrix} = \begin{bmatrix}
\frac{4}{\delta}h & -\frac{\frac{4}{\delta} hh''+1}{k} \\ k & -h''
\end{bmatrix}.  
\end{align}
 We force the condition $\frac{4}{\delta} hh'' \equiv -1(\mathrm{mod}~k)$,  so that all entries are integers. 
 Note that $\mathrm{sign}(k) = 1$.  Hence,   we have
\begin{align}\label{epsilon (a,b,c,d) for blr(n)}
\epsilon (a,b,c,d)=\exp \left(\frac{i\pi}{12}\left(\frac{\frac{4}{\delta} h-h''}{k}-12s(-h'',k)\right) \right).
\end{align}
Here we utilized Lemma \ref{important_result_for_dedekind_sum} to have $s(-h'',k) = s\left(\frac{4 h}{\delta}, k\right)$.  Now employing \eqref{transformation_formula_of_eta}, \eqref{values of abcd for blr(n)} and \eqref{epsilon (a,b,c,d) for blr(n)} in \eqref{relation of f and eta for blr(n)},  we arrive at
\begin{align*}
f\left(e^{\frac{8 \pi i }{\delta} \left(\frac{h}{k} + \frac{iz}{k} \right)}\right) = \sqrt{\frac{4 z}{\delta}} e^{i\pi s\left(\frac{4 h}{\delta},  k\right) + \frac{\pi}{12k}\left(\frac{\delta}{4 z}-\frac{4 z}{\delta}\right)}f\left(e^{2\pi i \left(\frac{h''}{k}+ \frac{i\delta}{4 kz}\right)}\right).
\end{align*}
Substituting $k$ by $\frac{k}{\delta}$ and then $z$ by $\frac{z}{k}$,  we get
\begin{align*}
f\left(e^{8\pi i  \left(\frac{h}{k} + \frac{iz}{k^2} \right)}\right) = \sqrt{\frac{4 z}{k \delta}} e^{i\pi s\left(\frac{4 h}{\delta},\frac{k}{\delta}\right) + \frac{\pi}{12k}\left(\frac{\delta^2k}{4 z}-\frac{4 z}{k}\right)}f\left(e^{2\pi i \delta\left(\frac{h''}{k}+ \frac{i\delta}{4 z}\right)}\right),  
\end{align*}
where $\frac{4}{\delta} hh'' \equiv -1(\mathrm{mod}~\frac{k}{\delta})$.   
Here,  we must pay attention to the fact that for each $\delta \in \{1, 2,4\}$ with $\gcd(k, 4) = \delta$,  we will get a different transformation formula.   This gives  the final transformation formula for $f(q^4)$ as follows: 
\begin{align}\label{transformation formula for f(q^4)}
f\left(e^{8 \pi i \left(\frac{h}{k} + \frac{iz}{k^2}\right)}\right) = 
\begin{cases}
\sqrt{\frac{4 z}{k }} e^{i\pi s\left(4h,k\right)} e^{\frac{\pi}{12k}\left(\frac{k}{4z}-\frac{4 z}{k}\right)}f\left(e^{2\pi i \left(\frac{h_4}{k}+ \frac{i}{4 z}\right)}\right),~~&\mathrm{if}~~ k~\mathrm{odd},\\
\sqrt{\frac{2z}{k}} e^{i\pi s\left(2h,\frac{k}{2}\right)} e^{\frac{\pi}{12k}\left(\frac{k}{z}-\frac{4z}{k}\right)}f\left(e^{4\pi i \left(\frac{h_5}{k}+ \frac{i}{2z}\right)}\right),~&\mathrm{if}~ k \equiv 2 (\textup{mod}~4),\\
\sqrt{\frac{z}{k}} e^{i\pi s\left(h,\frac{k}{4}\right)} e^{\frac{\pi}{3k}\left(\frac{k}{z}-\frac{z}{k}\right)}f\left(e^{8\pi i \left(\frac{h_6}{k}+ \frac{i}{z}\right)}\right),~~&\mathrm{if}~~ k \equiv 0 (\mathrm{mod}~4),
\end{cases}
\end{align}
where $4hh_4 \equiv -1$ (mod $k$), $2hh_5 \equiv -1$ (mod $\frac{k}{2}$), and $hh_6 \equiv -1$ (mod $\frac{k}{4}$).

The next result gives an upper and lower bound for $I_2(s)$. 
\begin{lemma}
Define the expression $E_{I_2}(s)$ as follows:
\begin{align}\label{Def_EI2}
E_{I_2}(s) := 1 - \frac{15}{8s}+ \frac{105}{128s^2} + \frac{315}{1024s^3} + \frac{10395}{32768s^4} + \frac{135135}{262144s^5}, 
\end{align}
then for $ s \geq 25 $, the bounds for $I_2(s)$ are given by
\begin{align}\label{bound_I2}
\frac{e^s}{\sqrt{2\pi s}} \left(E_{I_2}(s)-\frac{31}{s^6}\right) \leq I_2(s) \leq \frac{e^s}{\sqrt{2\pi s}} \left(E_{I_2}(s)+\frac{31}{s^6}\right).            
\end{align}
\end{lemma}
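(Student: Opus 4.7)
The plan is to derive the claimed bounds from the classical Hankel asymptotic expansion of $I_2(s)$ together with an explicit estimate on the truncation error, valid for $s \geq 25$.

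First, I would start from the Poisson integral representation
\[
I_2(s) = \frac{s^2}{3\pi}\int_{-1}^{1}e^{st}(1-t^2)^{3/2}\,dt,
\]
which is the $\nu = 2$ specialization of the standard formula for $I_\nu$. The substitution $t = 1 - 2v$ recasts this as the Laplace-type integral
\[
I_2(s) = \frac{16\,s^2 e^s}{3\pi}\int_0^1 e^{-2sv}\,v^{3/2}(1-v)^{3/2}\,dv,
\]
which is the natural setting for Watson's lemma.

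Second, I would expand $(1-v)^{3/2} = \sum_{k=0}^{5}(-1)^k\binom{3/2}{k}v^k + \rho_6(v)$ via Taylor's theorem and integrate the polynomial part against $e^{-2sv}v^{3/2}$ on $[0,\infty)$. Using $\int_0^\infty e^{-2sv}v^{3/2+k}\,dv = \Gamma(5/2+k)/(2s)^{5/2+k}$ and simplifying the prefactor $16 s^2 e^s/(3\pi)$ yields precisely $\frac{e^s}{\sqrt{2\pi s}}E_{I_2}(s)$; the six coefficients so obtained agree, as they must, with the classical Hankel coefficients $(-1)^k a_k(2)/s^k$, where $a_k(2) = \prod_{j=1}^{k}(16 - (2j-1)^2)/(8^k k!)$. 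In particular the first omitted coefficient is $|a_6(2)| = 4729725/4194304 \approx 1.127$.

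The substantive step is bounding the total remainder, which splits into two parts: (i) the corrections $\int_1^\infty e^{-2sv}v^{3/2+k}\,dv = O(e^{-2s})$ from extending the integration to infinity in each main term, which are negligible compared with any polynomial decay once $s \geq 25$; and (ii) the Taylor-remainder integral $\int_0^1 e^{-2sv}v^{3/2}\rho_6(v)\,dv$. For part (ii) one writes $\rho_6(v)$ in integral form, bounds the sixth derivative of $(1-v)^{3/2}$ and the resulting integrand pointwise, and integrates against $e^{-2sv}v^{3/2}$ to obtain a bound of the shape $C/s^6$. Tracking the explicit constants arising from $\binom{3/2}{k}$, $\Gamma(5/2+k)$, and the prefactor yields the claimed bound $|R(s)| \leq 31/s^6$ uniformly for $s \geq 25$.

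The main obstacle is producing the specific numerical constant $31$ in a clean, verifiable way. Since the first neglected term is only $\approx 1.13/s^6$, the bound $31/s^6$ is far from sharp; but converting this heuristic into a rigorous uniform inequality requires careful handling of the Taylor remainder near the endpoint $v = 1$, where $(1-v)^{3/2}$ has non-smooth higher derivatives. An alternative route is to invoke Olver's general error theorem for the asymptotic expansion of $I_\nu$, which majorizes the remainder by a fixed multiple of the first neglected term; either way the argument ultimately reduces to a finite numerical check at $s = 25$ together with a monotonicity argument for $s > 25$.
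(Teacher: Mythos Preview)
The paper does not give its own proof of this lemma; it simply cites \cite[Lemma~3.1]{cubicturan} and moves on. So there is no argument in the paper to compare against---your proposal is an independent derivation where the authors defer to a reference.

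Your outline via the Poisson integral and Watson's lemma is the standard route to the Hankel expansion of $I_2$, and the coefficient checks you sketch are correct (in particular $|a_6(2)|=4729725/4194304\approx 1.128$, so the target constant $31$ is generous). The one substantive gap, which you yourself identify, is pinning down the remainder constant rigorously. A Lagrange-form Taylor remainder for $(1-v)^{3/2}$ about $v=0$ fails outright, since the sixth derivative behaves like $(1-v)^{-9/2}$ and is unbounded on $[0,1)$; you must use the integral form of the remainder, or split the range $[0,1]$ at some interior point and bound the far piece directly using $(1-v)^{3/2}\leq 1$, or invoke a ready-made error theorem of Olver type for the Hankel expansion (which for real $s>0$ controls the truncation error by a bounded multiple of the first omitted term). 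Any of these closes the argument, and once the remainder is shown to be $O(1)/s^6$ with an explicit constant, the inequality $\leq 31/s^6$ for $s\geq 25$ is an easy numerical check. As written, though, your proposal names the obstacle without resolving it, so it is a correct plan with the final quantitative step left open.
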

\begin{proof}
For a proof of this result we refer \cite[Lemma 3.1]{cubicturan}.   
\end{proof}
We now present a key result that provides an upper bound for the tail of a series involving modified Bessel functions of the first kind. This bound plays a crucial role to obtain bounds for the error term of cubic overpartitions.
\begin{lemma}
Let $N$ be a positive integer. Then, we have
\begin{align}\label{bound_for_I2_series}
\sum_{j=N+1}^{\infty}I_2\left(\frac{s}{j} \right) \leq \frac{2N^2}{s} I_1\left(\frac{s}{N} \right).
\end{align}
\end{lemma}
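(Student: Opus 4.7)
The plan is to expand both sides as power series in $s$ and compare term by term. Using the standard series representation
\begin{align*}
I_\nu(x) = \sum_{m=0}^{\infty} \frac{1}{m!\,(m+\nu)!} \left(\frac{x}{2}\right)^{2m+\nu}
\end{align*}
for integer $\nu\geq 0$, I would interchange the sum over $j$ with the defining sum of $I_2$ (justified by absolute convergence of non-negative terms) to get
\begin{align*}
\sum_{j=N+1}^{\infty} I_2\!\left(\frac{s}{j}\right) = \sum_{m=0}^{\infty} \frac{(s/2)^{2m+2}}{m!\,(m+2)!} \sum_{j=N+1}^{\infty} \frac{1}{j^{2m+2}}.
\end{align*}
Since $x\mapsto x^{-(2m+2)}$ is decreasing on $(0,\infty)$, the integral comparison test yields
\begin{align*}
\sum_{j=N+1}^{\infty} \frac{1}{j^{2m+2}} \leq \int_{N}^{\infty} \frac{dx}{x^{2m+2}} = \frac{1}{(2m+1)\,N^{2m+1}},
\end{align*}
and hence
\begin{align*}
\sum_{j=N+1}^{\infty} I_2\!\left(\frac{s}{j}\right) \leq \sum_{m=0}^{\infty} \frac{(s/2)^{2m+2}}{m!\,(m+2)!\,(2m+1)\,N^{2m+1}}.
\end{align*}

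Next I would expand the right-hand side of the claimed bound. A direct computation (shifting the summation index by one, which puts the $m=0$ contribution of $I_1$ into a separate harmless positive summand $N$) gives
\begin{align*}
\frac{2N^2}{s}\,I_1\!\left(\frac{s}{N}\right) = N + \sum_{m=0}^{\infty} \frac{(s/2)^{2m+2}}{(m+1)!\,(m+2)!\,N^{2m+1}}.
\end{align*}
The proof then reduces to the elementary coefficient inequality
\begin{align*}
\frac{1}{m!\,(m+2)!\,(2m+1)} \leq \frac{1}{(m+1)!\,(m+2)!} \;\;\Longleftrightarrow\;\; m+1 \leq 2m+1,
\end{align*}
which holds for every $m\geq 0$. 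Term-by-term comparison together with non-negativity of the extra summand $N$ yields the desired estimate.

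This strategy is essentially mechanical; no single step is a genuine obstacle. The only thing requiring minor care is bookkeeping in the reindexing that produces the factorial factors $(m+1)!\,(m+2)!$ on the $I_1$-side, and verifying that the coefficient comparison indeed reduces to $m+1\leq 2m+1$. Everything else (termwise interchange, integral tail bound, positivity) is routine.
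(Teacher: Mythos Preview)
Your argument is correct. The power-series expansion, the Tonelli interchange, the integral-test tail bound $\sum_{j>N} j^{-(2m+2)}\le \tfrac{1}{(2m+1)N^{2m+1}}$, and the reindexing of the $I_1$-series are all accurate; the coefficient comparison does indeed reduce to $m+1\le 2m+1$. The only tacit hypothesis you are using is $s>0$ (so that the termwise inequality of non-negative coefficients yields an inequality of the sums), which is exactly the regime in which the lemma is applied.

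The paper itself does not prove this lemma; it merely cites \cite[Equation~(2.2)]{cubicturan}. So there is no in-paper argument to compare with, and your self-contained proof can stand on its own. (For the record, the proof in the cited source proceeds along the same lines: expand in power series and bound the tail by an integral.)
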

\begin{proof}
For details of the proof, one can see \cite[Equation (2.2)]{cubicturan}.
\end{proof}
The next result presents a lower bound for $I_2(s)$.
\begin{lemma}
For all $s \geq 30$, we have
\begin{align}\label{lower bound for I2}
I_2(s) \geq \frac{e^s}{\sqrt{2\pi s}} \left(1+\frac{2}{s} \right)^{-1}.
\end{align}
\end{lemma}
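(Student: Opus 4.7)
The plan is to derive this lower bound directly from the sharp two-sided estimate \eqref{bound_I2} established in the previous lemma. The lower half of \eqref{bound_I2} gives, for $s \geq 25$,
$$I_2(s) \geq \frac{e^s}{\sqrt{2\pi s}}\left(E_{I_2}(s) - \frac{31}{s^6}\right),$$
so it suffices to establish, for $s \geq 30$, the inequality
$$E_{I_2}(s) - \frac{31}{s^6} \geq \frac{s}{s+2}.$$
Clearing denominators (with $s+2 > 0$), this is equivalent to
$$(s+2)\,E_{I_2}(s) - s \ \geq\ \frac{31(s+2)}{s^6}.$$

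First I would expand $(s+2)E_{I_2}(s)$ using the definition \eqref{Def_EI2} and collect like powers of $1/s$. A short calculation exploits a convenient cancellation: the constant term and the $s^1$ term combine to produce $s + \tfrac{1}{8}$, leaving
$$(s+2)E_{I_2}(s) - s \ =\ \frac{1}{8} - \frac{375}{128\,s} + \frac{1995}{1024\,s^{2}} + \frac{30555}{32768\,s^{3}} + \frac{301455}{262144\,s^{4}} + \frac{270270}{262144\,s^{5}}.$$
The key structural observation is that every coefficient of $1/s^{k}$ with $k \geq 2$ is strictly positive, which is inherited from the fact that only the $1/s$ coefficient in \eqref{Def_EI2} is negative. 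For the two leading terms one checks $\tfrac{1}{8} - \tfrac{375}{128s} > 0$ as soon as $s > \tfrac{375}{16} = 23.4375$, so certainly for $s \geq 30$.

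Finally I would extract a uniform positive lower bound. Since $\tfrac{1}{8} - \tfrac{375}{128s}$ is monotonically increasing in $s$ and all remaining terms are non-negative, for every $s \geq 30$ one has
$$(s+2)E_{I_2}(s) - s \ \geq\ \frac{1}{8} - \frac{375}{128 \cdot 30} \ =\ \frac{105}{3840},$$
while $\tfrac{31(s+2)}{s^{6}}$ is decreasing in $s$ and so is bounded above on the same range by $\tfrac{31 \cdot 32}{30^{6}}$, which is several orders of magnitude smaller. Combining these two estimates yields the required inequality, and hence the lower bound on $I_{2}(s)$.

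There is no serious obstacle here; the argument is a clean dominant-term analysis in which the crucial ingredient is the cancellation leaving the constant $\tfrac{1}{8}$. The only mild care needed is in verifying that the $1/s^{k}$ coefficients for $k \geq 2$ in the expansion of $(s+2)E_{I_2}(s)-s$ are all non-negative, which is immediate from \eqref{Def_EI2}. The threshold $s \geq 30$ is then dictated by the condition $s > 375/16$ required to make the leading pair of terms already positive (and is comfortably above the threshold $s \geq 25$ needed to invoke \eqref{bound_I2}).
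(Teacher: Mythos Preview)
Your argument is correct: the expansion of $(s+2)E_{I_2}(s)-s$ is computed accurately, the sign analysis is clean, and the comparison $105/3840 > 31\cdot 32/30^{6}$ closes the estimate for all $s\geq 30$.

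By contrast, the paper does not give its own proof of this lemma at all; it simply cites \cite[Equation~(3.14)]{cubicturan}. Your approach is therefore different in spirit: rather than appealing to an external source, you bootstrap the desired bound from the sharper two-sided estimate \eqref{bound_I2} already quoted in the preceding lemma. This makes the argument self-contained within the paper and, as a bonus, explains why the threshold $s\geq 30$ is natural (it sits comfortably above both the $s\geq 25$ needed for \eqref{bound_I2} and the $s>375/16$ needed for positivity of the leading pair of terms). The only cost is a short explicit computation, which you have carried out correctly.
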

\begin{proof}
We refer \cite[Equation (3.14)]{cubicturan} to get insights of the proof.
\end{proof}
Having established the necessary background on Farey fractions, Ford circles,  Dedekind sums,  and properties of Bessel functions,  we are now prepared to proceed towards the proof of the main results of this paper. In the next section, we will apply these results, in conjunction with the circle method, to rigorously prove the Rademacher-type exact formula and  Tur\'{a}n inequalities  for the cubic overpartition function.

\section{Proof of main results}\label{proof of main result}

\subsection{Rademacher-type exact formula for cubic overpartitions}
In this subsection, we present a detailed proof of Rademacher-type exact formula for cubic overpartitions by applying the circle method, leveraging the transformation formula derived earlier, and carefully analyzing contributions from different Farey arcs associated with  Ford circles.

\begin{proof}[Theorem \ref{exact formula for overcubic partition theorem}][]
Let $\bar{A}(q)$ denotes the generating function for $\overline{a}(n)$. Then from \eqref{generating function of cubic overpartition},   we have
\begin{align*}
\overline{A}(q):= \sum_{n=0}^\infty \overline{a}(n) q^n = \frac{(-q;q)_\infty (-q^2;q^2)_\infty}{(q;q)_\infty (q^2;q^2)_\infty} = \frac{(q^4;q^4)_\infty}{(q;q)^2_\infty(q^2;q^2)_\infty} = \frac{(f(q))^2 f(q^2)}{f(q^4)},
\end{align*}
where $f(q)= \frac{1}{(q;q)_\infty}$. Now applying Cauchy's integral formula,  one has
\begin{align*}
\overline{a}(n) = \frac{1}{2 \pi i} \int_C \frac{\overline{A}(q)}{q^{n+1}} \mathrm{d}q,
\end{align*}
where $C:|q|=r<1$. Substitute $q=e^{2 \pi i \tau}$,  with $\tau \in \mathbb{H}$,  to get
\begin{align}\label{integral for cubic}
\overline{a}(n) = \int_{\tau_0}^{\tau_0+1} e^{-2 n \pi i \tau}\overline{A}(e^{2 \pi i \tau}) \mathrm{d}\tau,
\end{align}
where $\tau_0 \in \mathbb{H}$ and as the integrand is regular,  so we can choose any path joining $\tau_0$ to $\tau_0+1$.   We consider our path based on Farey dissections with $\tau_0=i$.   Given any natural number $N$,  we construct Ford circles belonging to Farey sequence of order $N$.   Then for the Ford circle $C(h,k)$,  defined in \eqref{Ford circle C(h,k)},   we choose arc $\gamma_{h,k}$ in such a way that it connects the tangency points $\alpha_1$ and $\alpha_2$,  see \ref{Ford circles}.   Here we denote  $\alpha_1$ and $\alpha_2$  as  $\frac{h}{k} + C'_{h,k}$ and $\frac{h}{k}+C''_{h,k}$, where
\begin{align*}
C'_{h,k}:=  - \frac{k_1}{k(k^2 + k_1^2)}+ i  \frac{1}{k^2 + k_1^2}, \quad \textrm{and} \quad C''_{h,k}:= \frac{k_2}{k(k^2 + k_2^2)} + i \frac{1}{k^2 + k_2^2}.
\end{align*} 
 Note that the arc $\gamma_{h,k}$ is chosen in a way so that it does not touches the real axis and the path of the integration is the collection of all such arcs.

Thus,  considering the above mentioned arcs $\gamma_{h,k}$ corresponding to the fractions $\frac{h}{k}$ in the Farey sequence of order $N$,  the integral \eqref{integral for cubic} takes the form
\begin{align*}
\overline{a}(n)=\sum_{\substack{0\leq h<k \leq N \\ (h,k)=1}} \int_{\gamma_{h,k}} e^{-2 n \pi i \tau}~\overline{A}(e^{2 \pi i \tau}) \mathrm{d}\tau,
\end{align*}
where $\gamma_{h,k}$ is an arc of the Ford circle $\left|\tau-\left( \frac{h}{k} + \frac{i}{2k^2} \right)\right| = \frac{1}{2k^2}$ not touching the $x$-axis. Substituting $\tau=\frac{h}{k} + \zeta$ in the above integral gives
\begin{align*}
\overline{a}(n)=\sum_{\substack{0\leq h<k \leq N \\ (h,k)=1}} e^{-2 n \pi i \frac{h}{k}} \int_{C'_{h,k}}^{C''_{h,k}} e^{-2 n \pi i \zeta}~\overline{A}\left(e^{2 \pi i \left(\frac{h}{k} + \zeta\right)}\right) \mathrm{d}\zeta.
\end{align*}
Replace $\zeta$ by $\frac{iz}{k^2}$ to arrive at
\begin{align}\label{integral before applying transformation formula}
\overline{a}(n)=\sum_{\substack{0\leq h<k \leq N \\ (h,k)=1}} \frac{i}{k^2} e^{-2 n \pi i \frac{h}{k}} \int_{z'_{h,k}}^{z''_{h,k}} e^{\frac{2n\pi z}{k^2}}~ \overline{A}\left(e^{2 \pi i \left(\frac{h}{k} + \frac{iz}{k^2}\right)}\right) \mathrm{d}z,
\end{align}
where 
\begin{align*}
z'_{h,k} =  \frac{k^2}{k^2 + k_1^2} + \frac{ikk_1}{k^2 + k_1^2}, \quad
z''_{h,k} =  \frac{k^2}{k^2 + k_2^2} - \frac{ikk_2}{k^2 + k_2^2},
\end{align*}
lie on circle $|z-\frac{1}{2}|=\frac{1}{2}$.  

Now our goal is to write a transformation formula for the below function, 
\begin{align}\label{A_bar(q)}
\overline{A}\left(e^{2 \pi i \left(\frac{h}{k} + \frac{iz}{k^2}\right)}\right) = \frac{\left\{ f\left(e^{2 \pi i \left(\frac{h}{k} + \frac{iz}{k^2}\right)}\right) \right\}^2 f\left(e^{4 \pi i \left(\frac{h}{k} + \frac{iz}{k^2}\right)}\right)}{f\left(e^{8 \pi i \left(\frac{h}{k} + \frac{iz}{k^2}\right)}\right)}.
\end{align}
Hardy and Ramanujan \cite[Lemma 4.31]{HR1918} obtained the following transformation formula for $f(q)$,  with $hh_1 \equiv -1$ (mod $k$),  
\begin{align}\label{transformation formula for f(q)}
f\left(e^{2\pi i \left(\frac{h}{k} + \frac{iz}{k^2} \right)}\right) = \sqrt{\frac{z}{k}} e^{i\pi s\left(h,k\right)} e^{\frac{\pi}{12k}\left(\frac{k}{z}-\frac{z}{k}\right)}f\left(e^{2\pi i \left(\frac{h_1}{k}+ \frac{i}{z}\right)}\right).
\end{align}
Now employing \eqref{transformation formula for f(q)},  and the transformation formulas for $f(q^2)$ and $f(q^4)$ obtained in \eqref{transformation formula for f(q^2)} and \eqref{transformation formula for f(q^4)} respectively,   in \eqref{A_bar(q)}, we arrive at the following transformation formula for $\overline{A}(q)$:
\begin{align}\label{transformation formula for A(q)}
\overline{A}\left(e^{2 \pi i \left(\frac{h}{k} + \frac{iz}{k^2}\right)}\right) =
\begin{cases}
\frac{z}{k\sqrt{2}} e^{i \pi \left\{2s(h,k) + s(2h,k) -s(4h,k) \right\}} e^{\frac{3 \pi}{16z}} \frac{ \left\{ f\left(e^{2\pi i \left(\frac{h_1}{k}+ \frac{i}{z}\right)}\right) \right\}^2 f\left(e^{2\pi i \left(\frac{h_2}{k}+ \frac{i}{2z}\right)}\right)}{f\left(e^{2\pi i \left(\frac{h_4}{k}+ \frac{i}{4z}\right)}\right)},&\mathrm{if}~ k~\mathrm{odd},\\
\frac{z}{k\sqrt{2}} e^{i \pi \left\{2s(h,k) + s(h,\frac{k}{2})-s(2h,\frac{k}{2}) \right\}} e^{\frac{\pi}{4z}} \frac{ \left\{ f\left(e^{2\pi i \left(\frac{h_1}{k}+ \frac{i}{z}\right)}\right)\right\}^2 f\left(e^{4\pi i \left(\frac{h_3}{k}+ \frac{i}{z}\right)}\right)}{f\left(e^{4\pi i \left(\frac{h_5}{k}+ \frac{i}{2z}\right)}\right)},&\mathrm{if}~ k \equiv 2 (\mathrm{mod}~4),\\
\frac{z}{k} e^{i \pi \left\{2s(h,k) + s(h,\frac{k}{2})-s(h,\frac{k}{4}) \right\}} \frac{ \left\{ f\left(e^{2\pi i \left(\frac{h_1}{k}+ \frac{i}{z}\right)}\right)\right\}^2 f\left(e^{4\pi i \left(\frac{h_3}{k}+ \frac{i}{z}\right)}\right)}{f\left(e^{8\pi i \left(\frac{h_6}{k}+ \frac{i}{z}\right)}\right)},&\mathrm{if}~ k \equiv 0 (\mathrm{mod}~4).
\end{cases}
\end{align}
We now apply the transformation formula \eqref{transformation formula for A(q)} in \eqref{integral before applying transformation formula}. The resulting sum is then partitioned into three components written  as follows:
\begin{align}\label{value of a(n) in terms of S_1, S_2 and S_0}
\overline{a}(n) = S_0 + S_1 + S_2.
\end{align}
In this decomposition,  $S_0$ comprises the contribution coming from $k \equiv 0 (\mathrm{mod}~4)$,  and $S_1$ accounts for the terms with $k$ odd,  $S_2$ includes those with $k \equiv 2 (\mathrm{mod}~4)$.  We start by analyzing $S_1$.  Implementing the transformation formula for $k$ odd in \eqref{integral before applying transformation formula}, we get
\begin{align}\label{S_1}
S_1=\frac{i}{\sqrt{2}}\sum_{\substack{k=1 \\ k~\mathrm{odd}}}^N \frac{A_k^{(1)}(n)}{k^3 }  \int_{z'_{h,k}}^{z''_{h,k}} z e^{\frac{2n\pi z}{k^2} + \frac{3 \pi}{16z}} \frac{ \left\{ f\left(e^{2\pi i \left(\frac{h_1}{k}+ \frac{i}{z}\right)}\right)\right\}^2 f\left(e^{2\pi i \left(\frac{h_2}{k}+ \frac{i}{2z}\right)}\right)}{f\left(e^{2\pi i \left(\frac{h_4}{k}+ \frac{i}{4z}\right)}\right)} \mathrm{d}z,
\end{align}
where 
\begin{align*}
A_k^{(1)}(n)&=\sum_{\substack{h=0 \\ (h,k)=1}}^{k-1}  e^{ \pi i \left(2s(h,k) + s(2h,k)-s(4h,k) \right) -2n \pi i \frac{h}{k} }.
\end{align*}
Our goal now is to simplify the integral present in \eqref{S_1} and denote it as $J_1$.  
For this purpose, we first estimate the bound of few terms present in the integral.  For $\Re(z) \rightarrow 0^{+}$,    $\Re(1/z)$ goes to infinity.  Thus,  one can check that 
\begin{align*}
\left\{ f\left(e^{2\pi i \left(\frac{h_1}{k}+ \frac{i}{z}\right)}\right)\right\}^2 & =  1 +  \mathcal{O}\left( e^{-2\pi \mathrm{Re} \left(\frac{1}{z} \right)} \right),  \\  f\left(e^{2\pi i \left(\frac{h_2}{k}+ \frac{i}{2z}\right)}\right) & =    1 +  \mathcal{O}\left( e^{- \pi \mathrm{Re} \left(\frac{1}{z} \right)} \right),  \\
f\left(e^{2\pi i \left(\frac{h_4}{k}+ \frac{i}{4z}\right)}\right) &=   1 +  \mathcal{O}\left( e^{- \frac{\pi}{2} \mathrm{Re} \left(\frac{1}{z} \right)} \right).  
\end{align*}
Combine these bounds to see that 
\begin{align*}
\frac{ \left\{ f\left(e^{2\pi i \left(\frac{h_1}{k}+ \frac{i}{z}\right)}\right)\right\}^2 f\left(e^{2\pi i \left(\frac{h_2}{k}+ \frac{i}{2z}\right)}\right)}{f\left(e^{2\pi i \left(\frac{h_4}{k}+ \frac{i}{4z}\right)}\right)} = 1 + \mathcal{O}\left( e^{-\frac{\pi}{2}\mathrm{Re} \left(\frac{1}{z} \right)} \right).
\end{align*}
Utilizing the above bound,   $J_1$ becomes
\begin{align*}
J_1&=\int_{z'_{h,k}}^{z''_{h,k}} z e^{\frac{2n\pi z}{k^2} + \frac{3 \pi}{16z}} \left( 1 + \mathcal{O}\left( e^{-\frac{\pi}{2}\mathrm{Re} \left(\frac{1}{z} \right)} \right) \right) \mathrm{d}z \nonumber \\
&= \int_{z'_{h,k}}^{z''_{h,k}} z e^{\frac{2n\pi z}{k^2} + \frac{3 \pi}{16z}} \mathrm{d}z + \mathcal{O}\left( \int_{z'_{h,k}}^{z''_{h,k}} |z| e^{\frac{2n\pi}{k^2} \mathrm{Re}(z)} e^{-\frac{5 \pi}{16}\mathrm{Re} \left(\frac{1}{z} \right)}  |\mathrm{d}z| \right).
\end{align*}
In the first integral above, we add and subtract the integral over the remaining portion of the circle  $|z- \frac{1}{2}|=\frac{1}{2}$,  namely,   the arc from $0$ to $z'_{h,k}$ and $z''_{h,k}$ to $0$.  Since the integrand of $J_1$ is regular,  so to bound the second integral,  we choose the straight line path connecting $z'_{h,k}$ and $z''_{h,k}$. Along this straight line, we have $
|z|\leq \frac{\sqrt{2}k}{N} $ and $ 0< \mathrm{Re}(z)\leq \frac{2k^2}{N^2}. $
Furthermore, the length of the straight line path is  bounded above by $|z'_{h,k}| + |z''_{h,k}|\leq \frac{2\sqrt{2}k}{N}$. This bound combined with the earlier estimates on the integrand allows us to effectively control the contribution of the integral $J_1$  in the subsequent analysis.
Thus, we have
\begin{align}\label{integral_J1}
J_1&= \int_{K^-} z e^{\frac{2n\pi z}{k^2} + \frac{3 \pi}{16z}} \mathrm{d}z - \int_0^{z'_{h,k}} z e^{\frac{2n\pi z}{k^2} + \frac{3 \pi}{16z}} \mathrm{d}z - \int_{z''_{h,k}}^0 z e^{\frac{2n\pi z}{k^2} + \frac{3 \pi}{16z}} \mathrm{d}z + \mathcal{O}\left( \frac{k}{N} e^{\frac{4n\pi}{N^2}} \int_{z'_{h,k}}^{z''_{h,k}} |\mathrm{d}z| \right) \nonumber \\
&= \int_{K^-} z e^{\frac{2n\pi z}{k^2} + \frac{3 \pi}{16z}} \mathrm{d}z - \int_0^{z'_{h,k}} z e^{\frac{2n\pi z}{k^2} + \frac{3 \pi}{16z}} \mathrm{d}z - \int_{z''_{h,k}}^0 z e^{\frac{2n\pi z}{k^2} + \frac{3 \pi}{16z}} \mathrm{d}z + \mathcal{O}\left( \frac{k^2}{N^2}  e^{\frac{4n\pi}{N^2}}    \right),
\end{align}
where $K^-$ denote the circle $|z-\frac{1}{2}|=\frac{1}{2}$ oriented in the clockwise direction.  On the arc from $0$ to $z'_{h,k}$, we have $|z|\leq \frac{\sqrt{2}k}{N},~0 < \mathrm{Re}(z)\leq \frac{2k^2}{N^2},~ \mathrm{Re}\left(\frac{1}{z}\right) = 1$ and the arc length is bounded above by $ \frac{\pi}{2} \frac{\sqrt{2}k}{N}$. Consequently, this gives
\begin{align}\label{bound_from_0_to_z'}
\int_0^{z'_{h,k}} z e^{\frac{2n\pi z}{k^2} + \frac{3 \pi}{16z}} \mathrm{d}z = \mathcal{O}\left( \frac{k^2}{N^2} e^{\frac{4n\pi}{N^2}}     \right).
\end{align}
Analogously, the integral along the segment from $z''_{h,k}$ to $0$ can be estimated by
\begin{align}\label{bound_from_z''_to_0}
\int_{z''_{h,k}}^0 z e^{\frac{2n\pi z}{k^2} + \frac{3 \pi}{16z}} \mathrm{d}z = \mathcal{O}\left( \frac{k^2}{N^2} e^{\frac{4n\pi}{N^2}}    \right).
\end{align}
Finally utilizing the bounds \eqref{bound_from_0_to_z'} and \eqref{bound_from_z''_to_0} in \eqref{integral_J1}, we arrive at
\begin{align}\label{final value of J_1}
J_1 = \int_{K^-} z e^{\frac{2n\pi z}{k^2} + \frac{3 \pi}{16z}} \mathrm{d}z + \mathcal{O}\left( \frac{k^2}{N^2}  e^{\frac{4n\pi}{N^2}}   \right).
\end{align}
Incorporating the expression for $J_1$ obtained in \eqref{final value of J_1} into \eqref{S_1}, we obtain
\begin{align}\label{final value of S_1}
S_1 &=\frac{i}{\sqrt{2}}\sum_{\substack{k=1 \\ k~\mathrm{odd}}}^N \frac{A_k^{(1)}(n)}{k^3 } \int_{K^-} z e^{\frac{2n\pi z}{k^2} + \frac{3 \pi}{16z}} \mathrm{d}z + \mathcal{O}\left(  \frac{e^{\frac{4n\pi}{N^2}}}{N^2}   \sum_{\substack{k=1 \\ k ~\mathrm{odd}}}^N \frac{1}{k} \sum_{\substack{h=0 \\ (h,k)=1}}^{k-1} 1 \right) \nonumber \\
&=\frac{i}{\sqrt{2}}\sum_{\substack{k=1 \\ k~\mathrm{odd}}}^N \frac{A_k^{(1)}(n)}{k^3 } \int_{K^-} z e^{\frac{2n\pi z}{k^2} + \frac{3 \pi}{16z}} \mathrm{d}z + \mathcal{O}\left(    \frac{e^{\frac{4n\pi}{N^2}}  }{N}\right).
\end{align}
In the above estimate, we have employed the trivial bound $|A_k^{(1)}(n)| \leq k$. Proceeding in a similar way, we can obtain analogous estimate for $S_2$ as
\begin{align}\label{final value of S_2}
S_2 &= \frac{i}{\sqrt{2}}\sum_{\substack{k=1 \\ k \equiv 2~ \textup{mod}~4}}^N \frac{A_k^{(2)}(n)}{k^3 } \int_{K^-} z e^{\frac{2n\pi z}{k^2} + \frac{\pi}{4z}} \mathrm{d}z + \mathcal{O}\left(  \frac{e^{\frac{4n\pi}{N^2}}  }{N}  \right),
\end{align}
where
\begin{align*}
A_k^{(2)}(n)&=\sum_{\substack{h=0 \\ (h,k)=1}}^{k-1}  e^{ \pi i \left(2s(h,k) + s\left(h,\frac{k}{2}\right)-s\left(2h,\frac{k}{2}\right) \right)-2n \pi i \frac{h}{k}}. 
\end{align*}
However,   to calculate $S_0$,  we need to use transformation formula \eqref{transformation formula for A(q)} corresponding to $k \equiv 0 (\textrm{mod}~ 4)$.  Note that   there is no term involving $e^{\Re(\frac{1}{z})}$ in the transformation formula and hence the corresponding integral has no dominant contribution.  Therefore,  the evaluation of $S_0$ goes in the error term.  Mainly,  one can show that 
\begin{align} \label{final value of S_0}
S_0 &= \mathcal{O}\left(  \frac{e^{\frac{4n\pi}{N^2}}  }{N}  \right).  
\end{align}
Now, combining \eqref{final value of S_1}, \eqref{final value of S_2} and \eqref{final value of S_0} in  \eqref{value of a(n) in terms of S_1, S_2 and S_0}, we obtain
\begin{align*}
\overline{a}(n) &= \frac{i}{\sqrt{2}}\sum_{\substack{k=1 \\ k~\mathrm{odd}}}^N \frac{A_k^{(1)}(n)}{k^3 } \int_{K^-} z e^{\frac{2n\pi z}{k^2} + \frac{3 \pi}{16z}} \mathrm{d}z + \frac{i}{\sqrt{2}} \sum_{\substack{k=1 \\ k \equiv 2~ \mathrm{mod}~4}}^N \frac{A_k^{(2)}(n)}{k^3 } \int_{K^-} z e^{\frac{2n\pi z}{k^2} + \frac{\pi}{4z}} \mathrm{d}z \nonumber \\
& \hspace{2cm}+ \mathcal{O}\left(  \frac{e^{\frac{4n\pi}{N^2}}  }{N}  \right).   
\end{align*}
Since the left-hand side is independent of $N$, taking the limit as $N\rightarrow \infty$ forces the error term on the right-hand side to vanish. Consequently, we have
\begin{align}\label{final value of a(n)}
\overline{a}(n) = \frac{i}{\sqrt{2}} \sum_{\substack{k=1 \\ k~\mathrm{odd}}}^\infty \frac{A_k^{(1)}(n)}{k^3 } \int_{K^-} z e^{\frac{2n\pi z}{k^2} + \frac{3 \pi}{16z}} \mathrm{d}z + \frac{i}{\sqrt{2}} \sum_{\substack{k=1 \\ k \equiv 2~ \mathrm{mod}~4}}^\infty \frac{A_k^{(2)}(n)}{k^3} \int_{K^-} z e^{\frac{2n\pi z}{k^2} + \frac{\pi}{4z}} \mathrm{d}z.
\end{align}
This provides an exact formula for $\overline{a}(n)$, with the sole remaining task being to verify the convergence of the infinite series. To this end,  note that along the circle $K^{-}$, we have $|z| \leq 1,~ \mathrm{Re}(z) \leq 1$ and $\mathrm{Re}\left( \frac{1}{z}\right)=1$, which leads to
\begin{align*}
\left| \int_{K^-} z e^{\frac{2n\pi z}{k^2} + \frac{3 \pi}{16z}} \mathrm{d}z\right | \leq \pi e^{2n\pi + \frac{3 \pi}{16}}.
\end{align*}
Thus,  we have
\begin{align*}
\left|\frac{i}{\sqrt{2}} \sum_{\substack{k=1 \\ k~\mathrm{odd}}}^\infty \frac{A_k^{(1)}(n)}{k^3} \int_{K^-} z e^{\frac{2n\pi z}{k^2} + \frac{3 \pi}{16z}} \mathrm{d}z \right| \leq \frac{\pi}{\sqrt{2}} e^{2n\pi + \frac{3 \pi}{16}} \sum_{\substack{k=1 \\  k~\mathrm{odd}}}^\infty \frac{1}{k^2}.
\end{align*}
Here again, we have utilized the  bound $|A_k^{(1)}(n)| \leq k$. 
Since the series on the right hand side above is convergent, it follows that the first sum in \eqref{final value of a(n)} is absolutely convergent.   An identical argument works for the convergence of second sum as well. 
Our next objective is to express both integrals in \eqref{final value of a(n)} in terms of classical special functions.  We will illustrate this procedure for the integral in the first sum; the same method can be applied verbatim to the integral in the second sum. To achieve this,   we perform a change of variable $z = \frac{1}{\omega}$, which maps the circular contour $K^{-}$ onto the vertical line $\mathrm{Re}(\omega) = 1$, running from $1 - i\infty$ to $1 + i\infty$. Under this substitution, the integral reduces to
\begin{align*}
\int_{K^-} z e^{\frac{2n\pi z}{k^2} + \frac{3 \pi}{16z}} \mathrm{d}z = - \int_{1-i\infty}^{1+i\infty} \omega^{-3} e^{\left(\frac{2n \pi}{k^2 \omega}+\frac{3\pi \omega}{16}  \right)} \mathrm{d}\omega.
\end{align*}
Next, substitute $\frac{3 \pi \omega}{16} = t$ in the integral on the right-hand side. Under this change of variable,  the real part of the line of integration becomes $\mathrm{Re}(t) = \dfrac{3\pi}{16}$.  Hence,  we get 
\begin{align}\label{line integral involving I Bessel}
\int_{K^-} z e^{\frac{2n\pi z}{k^2} + \frac{3 \pi}{16z}} \mathrm{d}z = -\left( \frac{3 \pi}{16} \right)^2 \int_{\frac{3\pi}{16}-i\infty}^{\frac{3\pi}{16}+i\infty} t^{-3} e^{\left( t +\frac{3n\pi^2}{8k^2t} \right)} \mathrm{d}t.
\end{align}
One can observe that the integral derived above closely resembles the classical integral representation of the modified Bessel function of the first kind,  
namely,  for $c>0$,  $\Re(\nu)>0$, 
\begin{align}\label{integral representation of bessel function}
I_\nu(z)=\frac{(z/2)^\nu}{2\pi i}\int_{c-i \infty}^{c+i \infty} t^{-\nu-1} e^{\left( t + \frac{z^2}{4t}\right)} \mathrm{d}t.
\end{align}
Now invoking \eqref{integral representation of bessel function},  one can simplify the integral in \eqref{line integral involving I Bessel}  in terms of $I_\nu(z)$ as 
\begin{align}\label{integral representation of first integral}
\int_{K^-} z e^{\frac{2n\pi z}{k^2} + \frac{3 \pi}{16z}} \mathrm{d}z = -\frac{3\pi ik^2}{16n} I_2\left( \frac{\pi}{k}\sqrt{\frac{3n}{2}} \right).
\end{align}
Similarly,   the integral in the second sum of \eqref{final value of a(n)} can be simplified in terms of the modified Bessel function $I_\nu(z)$,  namely,  
\begin{align}\label{integral representation of second integral}
\int_{K^-} z e^{\frac{2n\pi z}{k^2} + \frac{\pi}{4z}} \mathrm{d}z = -\frac{\pi ik^2}{4n} I_2\left( \frac{\pi}{k}\sqrt{2n} \right).
\end{align}
Substituting \eqref{integral representation of first integral} and \eqref{integral representation of second integral} into \eqref{final value of a(n)} establishes the desired Rademacher-type exact formula for cubic overpartition function $\overline{a}(n)$. This completes the proof of Theorem \ref{exact formula for overcubic partition theorem}.
\end{proof}
\begin{proof}[Corollary \ref{corollary of asymptotic of cubic overpartition}][]
We start by separating the term corresponding to $k=1$ from the exact formula \eqref{exact formula for overcubic partition equation} for the cubic overpartition, which gives 
\begin{align}\label{split of a(n)}
\overline{a}(n) = M_{\overline{a}}(n) + E_{\overline{a}}(n),
\end{align}
where 
\begin{align}\label{Main_term_overcubic}
M_{\overline{a}}(n) = \frac{3\pi}{16n\sqrt{2}} I_2\left(\pi \sqrt{\frac{3n}{2}} \right),
\end{align}
and 
\begin{align*}
E_{\overline{a}}(n) = \frac{3\pi}{16n \sqrt{2}} \sum_{\substack{k=3 \\ k~\text{odd}}}^\infty \frac{A_k^{(1)}(n)}{k} I_2\left( \frac{\pi}{k} \sqrt{\frac{3n}{2}} \right) + \frac{\pi}{4n \sqrt{2}} \sum_{\substack{k=1 \\ k\equiv 2 ~(\textrm{mod}~4)}} ^\infty \frac{A_k^{(2)}(n)}{k} I_2\left( \frac{\pi}{k} \sqrt{2n} \right).
\end{align*}
Our first goal is to obtain an upper bound for $|E_{\overline{a}}(n)|$.  As we can bound $A_k^{(1)}(n)$ and $A_k^{(2)}(n)$ trivially by $k$ for any $k\geq 1$ and $n\geq 0$,  so we have 
\begin{align}\label{after separating k=2 term}
|E_{\overline{a}}(n)| &\leq \frac{3\pi}{16n \sqrt{2}} \sum_{\substack{k=3 \\ k~\text{odd}}}^\infty I_2\left( \frac{\pi}{k} \sqrt{\frac{3n}{2}} \right) + \frac{\pi}{4n \sqrt{2}} \sum_{\substack{k=1 \\ k\equiv 2 ~(\textrm{mod}~4)}} ^\infty  I_2\left( \frac{\pi}{k} \sqrt{2n} \right) \nonumber \\
&\leq \frac{3\pi}{16n \sqrt{2}} \sum_{k=3}^\infty I_2\left( \frac{\pi}{k} \sqrt{\frac{3n}{2}} \right) + \frac{\pi}{4n \sqrt{2}} \sum_{k=6} ^\infty  I_2\left( \frac{\pi}{k} \sqrt{2n} \right) + \frac{\pi}{4n\sqrt{2}} I_2\left(\pi \sqrt{\frac{n}{2}} \right).
\end{align}
Here, in the last step, we have separated the term corresponding to $k=2$. Next we utilize \eqref{bound_for_I2_series} to bound the first two series as 
\begin{align}\label{bound of first I2 series}
\sum_{k=3}^\infty I_2\left( \frac{\pi}{k} \sqrt{\frac{3n}{2}} \right) \leq \frac{8}{\pi} \sqrt{\frac{2}{3n}} I_1\left( \frac{\pi}{2} \sqrt{\frac{3n}{2}} \right),
\end{align}
and 
\begin{align}\label{bound of second I2 series}
\sum_{k=6} ^\infty  I_2\left( \frac{\pi}{k} \sqrt{2n} \right) \leq \frac{50}{\pi \sqrt{2n}}  I_1\left( \frac{\pi}{5} \sqrt{2n} \right).
\end{align}
Thus, substituting \eqref{bound of first I2 series} and \eqref{bound of second I2 series} in \eqref{after separating k=2 term} and further utilizing the exponential upper bound for $I_{\nu}(s) \leq \sqrt{\frac{2}{\pi s}}e^s$, for any $\nu,  s >0$,   we reach at
\begin{align*}
|E_{\overline{a}}(n)| \leq \frac{6^{\frac{1}{4}}}{\pi n^{\frac{7}{4}}} e^{\frac{\pi}{2} \sqrt{\frac{3n}{2}}} + \frac{5^{\frac{5}{2}}}{\pi (2n)^{\frac{7}{4}}} e^{\frac{\pi}{5} \sqrt{2n}} + \frac{1}{2^{\frac{7}{4}}n^{\frac{5}{4}}} e^{\pi \sqrt{\frac{n}{2}}}.
\end{align*}
As $\frac{1}{\sqrt{2}} > \frac{\sqrt{3}}{2 \sqrt{2}} > \frac{\sqrt{2}}{5}$ and exponential is an increasing function,  hence we have
\begin{align}\label{bound for error}
|E_{\overline{a}}(n)| \leq \frac{1}{2^{\frac{7}{4}}} \left( \frac{2\cdot 5^{\frac{5}{2}}}{\pi n^{\frac{7}{4}}} + \frac{1}{n^{\frac{5}{4}}} \right) e^{\pi \sqrt{\frac{n}{2}}},  
\end{align}
while combining the terms, we also utilized the fact that $6^{\frac{1}{4}}  < \frac{5^{\frac{5}{2}}}{2^{\frac{7}{4}}}$.  This completes the proof of \eqref{asymptotic for cubic}.

Finally,  employing the below classical asymptotic expansion for the modified Bessel function of the first kind,  that is,  
\begin{align*}
I_{\nu}(x) \sim \frac{e^{x}}{\sqrt{2\pi x}} \quad \text{as} \quad x \to \infty,  
\end{align*} in \eqref{Main_term_overcubic} and simplifying yields the asymptotic relation \eqref{asymptotic cubic overpartition}.

\end{proof}

\subsection{Tur\'{a}n inequalities for cubic overpartitions} \label{ Turan inequalities for cubic overpartitions}
Here, we  prove various results which  eventually lead us to obtain the second order Tur\'{a}n inequality i.e.,  log-concavity for the cubic overpartition function.  At the end,  we also present higher Tur\'{a}n inequalities for the cubic overpartition function.  

The first result in this direction provides explicit upper and lower bounds for $\overline{a}(n)$. 
\begin{lemma}\label{bounds for overcubic in terms of main term}
For all $n \geq 393$,  we have
\begin{align*}
M_{\overline{a}}(n) \left(1-\frac{1}{n^6} \right) \leq \overline{a}(n) \leq M_{\overline{a}}(n) \left(1+\frac{1}{n^6} \right),
\end{align*}
where 
\begin{align}\label{Definition of main term}
M_{\overline{a}}(n) = \frac{3\pi}{16n\sqrt{2}} I_2\left(\pi \sqrt{\frac{3n}{2}} \right).
\end{align}
\end{lemma}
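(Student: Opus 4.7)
The plan is to rewrite the claimed double inequality as
\begin{align*}
|\overline{a}(n) - M_{\overline{a}}(n)| \leq \frac{M_{\overline{a}}(n)}{n^6}.
\end{align*}
By the decomposition \eqref{split of a(n)} this is equivalent to $|E_{\overline{a}}(n)| \leq M_{\overline{a}}(n)/n^6$, so the whole task reduces to bounding $|E_{\overline{a}}(n)|$ from above and $M_{\overline{a}}(n)$ from below carefully enough.

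For the numerator I would apply the explicit bound \eqref{bound for error} established inside the proof of Corollary \ref{corollary of asymptotic of cubic overpartition}, namely
\begin{align*}
|E_{\overline{a}}(n)| \leq \frac{1}{2^{7/4}}\left(\frac{2\cdot 5^{5/2}}{\pi n^{7/4}} + \frac{1}{n^{5/4}}\right) e^{\pi \sqrt{n/2}}.
\end{align*}
For the denominator I would invoke the lower bound \eqref{lower bound for I2} on $I_2(s)$, which is valid for $s \geq 30$. Writing $v = \pi\sqrt{3n/2}$ as in \eqref{Definition of v}, the condition $v \geq 30$ holds as soon as $n \geq 61$, which is compatible with $n \geq 393$, giving
\begin{align*}
M_{\overline{a}}(n) \geq \frac{3\pi}{16 n\sqrt{2}} \cdot \frac{e^{v}}{\sqrt{2\pi v}} \left(1 + \frac{2}{v}\right)^{-1}.
\end{align*}

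Combining these two estimates and cancelling the common factor $e^{\pi\sqrt{n/2}}$, the desired inequality reduces to one of the shape
\begin{align*}
P(n) \leq \exp\!\bigl((\sqrt{3}-1)\pi\sqrt{n/2}\bigr),
\end{align*}
where $P(n)$ is an explicit elementary expression built from rational powers of $n$, the factor $\sqrt{v}$, and $(1 + 2/v)$, and grows only polynomially in $n$ (roughly like $n^{6+1/4}$ after the $\sqrt{v}$ is absorbed). Since the right-hand side grows like $e^{c\sqrt{n}}$ with $c = (\sqrt{3}-1)\pi/\sqrt{2} > 0$, the inequality is evidently true for all sufficiently large $n$, and the remaining work is to pin down the correct threshold.

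The main obstacle will be the numerical verification that this threshold is exactly $n = 393$. I would handle it in two steps. First, differentiate $\log P(n) - (\sqrt{3}-1)\pi\sqrt{n/2}$ in $n$ to show it is eventually monotonically decreasing; since the polynomial part contributes a term of order $1/n$ while the exponent contributes a term of order $1/\sqrt{n}$, monotonicity will hold for all $n$ above a modest cutoff that one verifies by hand. Second, evaluate the reduced inequality directly at $n = 393$ to confirm it holds, after which monotonicity propagates it to all $n \geq 393$. There is no deep difficulty here, but since the constants in \eqref{bound for error} and \eqref{lower bound for I2} are not sharp, the threshold cannot easily be lowered without sharpening one of those inputs.
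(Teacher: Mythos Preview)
Your proposal is correct and follows essentially the same route as the paper: both use the decomposition \eqref{split of a(n)}, the error bound \eqref{bound for error}, and the lower bound \eqref{lower bound for I2} for $I_2$, then reduce to an inequality of the shape (polynomial in $n$) $\leq e^{(\sqrt{3}-1)\pi\sqrt{n/2}}$ and finish by a monotonicity argument plus direct evaluation at $n=393$. The only cosmetic difference is that the paper first absorbs the polynomial factors into a single numerical constant (obtaining $G_{\overline{a}}(n)\leq 0.027\,e^{\pi\sqrt{n}(1-\sqrt{3})/\sqrt{2}}$) before comparing with $1/n^6$, whereas you keep the powers of $n$ explicit throughout.
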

\begin{proof}
First,  let us define 
\begin{align}\label{Definition Ga(n)}
G_{\overline{a}}(n):= \frac{1}{M_{\overline{a}}(n)} \left( \frac{1}{2^{\frac{7}{4}}} \left( \frac{2\cdot 5^{\frac{5}{2}}}{\pi n^{\frac{7}{4}}} + \frac{1}{n^{\frac{5}{4}}} \right) e^{\pi \sqrt{\frac{n}{2}}} \right).  
\end{align}
Then,  from \eqref{split of a(n)} and \eqref{bound for error},   we see that
\begin{align*}
|\overline{a}(n) - M_{\overline{a}}(n)| = |E_{\overline{a}}(n)| \leq G_{\overline{a}}(n) M_{\overline{a}}(n).
\end{align*}
Now,  our main goal is to prove that,  for $n \geq 393$,  
\begin{align}\label{Ga(n) bound we need}
G_{\overline{a}}(n) \leq \frac{1}{n^6}.
\end{align}
To get this upper bound for $G_{\overline{a}}(n)$, we first need a lower bound for $M_{\overline{a}}(n)$.  This can be achieved by using a lower bound \eqref{lower bound for I2} for $I_2(s)$.  Thus applying \eqref{lower bound for I2} in \eqref{Definition Ga(n)},   for $n \geq 67$,  one can see that 
\begin{align*}
G_{\overline{a}}(n) \leq 0.027 e^{\pi \sqrt{n}\left(\frac{1-\sqrt{3}}{\sqrt{2}} \right)}.
\end{align*}
Hence proving \eqref{Ga(n) bound we need} is equivalent to verifying the following inequality
\begin{align*}
0.027 e^{\pi \sqrt{n}\left(\frac{1-\sqrt{3}}{\sqrt{2}} \right)} \leq \frac{1}{n^6}, \quad \text{for}~\text{all}~ n \geq 393.
\end{align*}
To do so,  we define the function
\begin{align*}
H(x) := 0.027 e^{\pi \sqrt{x}\left(\frac{1-\sqrt{3}}{\sqrt{2}} \right)} x^6.
\end{align*}
One can check that,  for all $x \geq 54.4516$, $H'(x) <  0$. This shows that $H(x)$ is decreasing on $(54.4516, \infty)$. As $H(393)<1$,  so $H(n)< H(393)<1$ for all $n \geq 393$. This proves that  
\begin{align*}
G_{\overline{a}}(n) \leq 0.027 e^{\pi \sqrt{n}\left(\frac{1-\sqrt{3}}{\sqrt{2}} \right)} \leq \frac{1}{n^6},
\end{align*}
for all $n \geq 393$,  which confirms the validity of \eqref{Ga(n) bound we need} and hence the proof of Lemma \ref{bounds for overcubic in terms of main term} is now complete.
\end{proof}
\begin{remark}\label{remark}
 We would also like to point out that the Lemma \ref{bounds for overcubic in terms of main term} can also be true for any $\alpha >0$ such that 
\begin{align*}
M_{\overline{a}}(n) \left(1-\frac{1}{n^\alpha} \right) \leq \overline{a}(n) \leq M_{\overline{a}}(n) \left(1+\frac{1}{n^\alpha} \right),  \quad \textrm{for}~ n \geq n_0(\alpha).  
\end{align*}

\end{remark}
In the next result, we establish bounds for the product of modified Bessel functions of the first kind.
\begin{lemma}\label{bounds for product of bessel functions}
For $n \geq 2363$, the following bounds are true:
\begin{align*}
\frac{v}{\sqrt{v^+ v^-}} \Xi_{\overline{a}}(n) \leq \frac{I_2(v^-) I_2(v^+)}{(I_2(v))^2} \leq \frac{v}{\sqrt{v^+ v^-}} \Psi_{\overline{a}}(n),
\end{align*}
where 
\begin{align}
\Xi_{\overline{a}}(n) & := \left(1- \frac{9\pi^4}{16 v^3}- \frac{27 \pi^6}{8 v^5} - \frac{405 \pi^8}{2048 v^7}\right) \left( 1-\frac{309}{v^5}-\frac{535}{v^6}\right),  \label{definition Xi}  \\
\Psi_{\overline{a}}(n) & := \left(1-\frac{9\pi^4}{16 v^3}+\frac{81 \pi^8}{256 v^6}\right) \left( 1-\frac{308}{v^5}-\frac{286}{v^6} \right),   \label{definition Psi}
\end{align}
and $v,   v^{+},  v^{-}$ are defined as in \eqref{Definition of v}.  
\end{lemma}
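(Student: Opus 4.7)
The plan is to plug the two-sided Bessel estimate \eqref{bound_I2} directly into the quotient $I_2(v^-)I_2(v^+)/I_2(v)^2$. For $n \geq 2363$ one checks $v^- \geq 25$, so the hypothesis of \eqref{bound_I2} is satisfied at each of the three arguments $v,\,v^+,\,v^-$. For the upper bound on the ratio we bound the numerator from above (taking the $+31/s^6$ sign at $s=v^{\pm}$) and the denominator from below (taking the $-31/v^6$ sign); for the lower bound we reverse the signs. In both cases the Gaussian/root prefactors combine into
\[
\frac{I_2(v^-)\,I_2(v^+)}{I_2(v)^2}
= \frac{v}{\sqrt{v^+ v^-}}\;e^{\,v^+ + v^- - 2v}\;\mathcal{R}_{\pm}(n),
\]
where $\mathcal{R}_{\pm}(n)$ denotes the ratio of the appropriate $E_{I_2}\pm 31/s^6$ factors. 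This isolates exactly the prefactor $v/\sqrt{v^+ v^-}$ in the statement, so it remains to bound the exponential factor and $\mathcal{R}_{\pm}(n)$ from above and below by the two factors appearing in $\Xi_{\overline{a}}(n)$ and $\Psi_{\overline{a}}(n)$.

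For the exponential part, I expand $v^+ + v^- - 2v = v\bigl(\sqrt{1+1/n}+\sqrt{1-1/n}-2\bigr)$ via the binomial series, obtaining
\[
v^+ + v^- - 2v = -\tfrac{v}{4n^{2}} - \tfrac{5v}{64 n^{4}} - \tfrac{21 v}{512 n^{6}} - \cdots,
\]
and then substitute $1/n = 3\pi^2/(2v^2)$ to convert this into a power series in $1/v$ with leading term $-\tfrac{9\pi^4}{16 v^3}$ and subsequent terms of order $1/v^7$, $1/v^{11}$, etc. Writing $A := -\tfrac{9\pi^4}{16v^3}$ and letting $B$ denote the remainder, the upper bound follows from $e^{A+B}\le e^{A}$ (since $B<0$) combined with the elementary inequality $e^{A}\le 1+A+A^{2}$ valid for $A\le 0$, which yields exactly the first factor of $\Psi_{\overline{a}}(n)$. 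The lower bound uses $e^{A+B}\ge 1+(A+B)$ and a crude majorization of the higher-order tails by a single term of the form $-\tfrac{27\pi^{6}}{8v^{5}}-\tfrac{405\pi^{8}}{2048 v^{7}}$, chosen to absorb the tail uniformly for $v$ as large as those considered here; verifying that this majorization holds for $n \geq 2363$ is a finite numerical check using monotonicity in $v$.

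For the $E$-factor $\mathcal{R}_{\pm}(n)$, I substitute the explicit polynomial $E_{I_2}(s) = 1-\tfrac{15}{8s}+\tfrac{105}{128 s^2}+\cdots+\tfrac{135135}{262144 s^5}$ and the tail $\pm 31/s^6$ at $s=v,v^{\pm}$, and then expand $1/v^{\pm}$ in powers of $1/v$ using $1/v^{\pm} = v^{-1}(1\pm 1/n)^{-1/2}$. The leading correction comes from
\[
\tfrac{1}{v^{+}}+\tfrac{1}{v^{-}}-\tfrac{2}{v} = \tfrac{3}{4n^{2} v} + O\!\left(\tfrac{1}{n^{4} v}\right) = \tfrac{27\pi^{4}}{16 v^{5}} + O\!\left(\tfrac{1}{v^{9}}\right),
\]
which, multiplied by $-15/8$, contributes a term of size $-\tfrac{405\pi^{4}}{128\,v^{5}}$ to $\log\mathcal{R}_{\pm}(n)$. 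Numerically, $405\pi^{4}/128$ lies between $308$ and $309$, which is precisely how the constants $308/v^{5}$ and $309/v^{5}$ appear in $\Psi_{\overline{a}}(n)$ and $\Xi_{\overline{a}}(n)$. The remaining $1/v^{6}$ contributions come from the $1/s^{2}$ coefficients of $E_{I_2}$ applied to $1/(v^{\pm})^{2}-1/v^{2}$, from the square of the $1/v^{5}$ correction, and from the $\pm 31/s^{6}$ tails; all of these are bounded by constants of the form $C/v^{6}$, which give the remaining $535/v^{6}$ and $286/v^{6}$ terms after absorbing cross terms between the exponential and $E$-factors.

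The main obstacle I anticipate is the bookkeeping: every cross term between the exponential factor, the polynomial $E_{I_2}$, the tail $\pm 31/s^{6}$, and the expansions of $1/v^{\pm}$ in powers of $1/v$ must be controlled cleanly enough to be absorbed into the explicit constants $308,\,309,\,286,\,535$ on the right-hand side. To do this I will collect all residual contributions into a single majorant of the form $C/v^{7}$ and show, via $v\ge \pi\sqrt{3\cdot 2363/2}$, that this majorant is strictly smaller than the gap between my estimates and the stated constants. Once this numerical threshold is verified (a monotone one-variable inequality in $v$), the two inequalities in the lemma follow by combining the exponential and $E$-factor bounds in the factored form displayed in \eqref{definition Xi} and \eqref{definition Psi}.
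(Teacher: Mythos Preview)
Your proposal is correct and follows essentially the same route as the paper: apply \eqref{bound_I2} at $v,v^+,v^-$ to factor the quotient as $(v/\sqrt{v^+v^-})\,e^{v^++v^--2v}$ times a ratio of $E_{I_2}\pm 31/s^6$ factors, then bound the exponential via $1+s\le e^s\le 1+s+s^2$ after a Taylor expansion of $v^++v^--2v$, and finally bound the rational $E_{I_2}$-ratio by $1-c_1/v^5-c_2/v^6$.

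The only notable difference is in how you execute that last step. The paper bounds $v^{\pm}$ individually by explicit quantities $A_v,B_v,C_v,D_v$ (truncated Taylor polynomials of $\sqrt{n\pm1}$), substitutes those bounds term-by-term into the numerator of the $E_{I_2}$-ratio to produce concrete expressions $\Theta,\Delta$, clears denominators, and reduces the required inequalities to checking that a degree-$25$ polynomial in $v$ has the right sign; this is then handled by majorizing the low-degree coefficients by $|\alpha_{21}|v^{21}$ (resp.\ $|\beta_{22}|v^{22}$) and locating the real roots of the resulting quintic. Your plan instead expands $1/v^{\pm}$ directly as a series in $1/v$, identifies the leading $-\tfrac{405\pi^4}{128}\,v^{-5}$ correction (which indeed explains the constants $308$ and $309$), and sweeps all remaining cross terms into a $C/v^{7}$ remainder to be checked numerically at $v=\pi\sqrt{3\cdot2363/2}$. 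Both strategies are sound; the paper's is more mechanical and self-verifying, while yours is more transparent about where the specific constants come from but demands tighter bookkeeping of the many cross terms you flag in your final paragraph.
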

\begin{proof}
First,  we recall
\begin{align}\label{Defn of v, v+,v-}
v = \pi \sqrt{\frac{3n}{2}},  \quad v^+ = \pi \sqrt{\frac{3(n+1)}{2}},  \quad \text{and} \quad v^- = \pi \sqrt{\frac{3(n-1)}{2}}. 
\end{align}
To attain the required bounds for $ \frac{I_2(v^-) I_2(v^+)}{(I_2(v))^2} $, we utilize an upper and lower bound given in \eqref{bound_I2}  for $I_2(s)$.   Mainly,   for all $n \geq 44$,  we get
\begin{align*}
\frac{v}{\sqrt{v^+ v^-}} e^{v^+ + v^- -2v}   U_{\overline{a}}(n) \leq  \frac{I_2(v^-) I_2(v^+)}{(I_2(v))^2} \leq \frac{v}{\sqrt{v^+ v^-}} e^{v^+ + v^- -2v}   V_{\overline{a}}(n),
\end{align*}
where
\begin{align}
U_{\overline{a}}(n) & := \frac{\left( E_{I_2}(v^-) - \frac{31}{v^{-^6}} \right) \left(E_{I_2}(v^+) - \frac{31}{v^{+^6}} \right)}{\left(E_{I_2}(v) + \frac{31}{v^6} \right)^2 },  \label{Definition U}  \\
V_{\overline{a}}(n) & := \frac{\left( E_{I_2}(v^-) + \frac{31}{v^{-^6}} \right) \left(E_{I_2}(v^+) + \frac{31}{v^{+^6}} \right)}{\left(E_{I_2}(v) - \frac{31}{v^6} \right)^2 },   \label{Definition V}
\end{align}
and $E_{I_2}(s)$ is defined as in \eqref{Def_EI2}.
We start by deriving an upper and lower bound for $e^{v^+ + v^- -2v}$. 
In this context,  we first derive bounds for $v^{+},  v^-$ in terms of $v$. 
One can easily check that,  for $n >1$,  
\begin{align*}
\sqrt{n} \left(1-\frac{1}{2n}-\frac{1}{8n^2}-\frac{1}{16n^3}-\frac{1}{n^3} \right) < \sqrt{n-1} < \sqrt{n} \left(1-\frac{1}{2n}-\frac{1}{8n^2}-\frac{1}{16n^3} \right), 
\end{align*}
and then multiply throughout by $\pi \sqrt{\frac{3}{2}}$ to see that
\begin{align}\label{bound for v-}
C_v \leq v^- \leq D_v,
\end{align}
where
\begin{align*}
C_v &:= v-\frac{3\pi^2}{4v}-\frac{9\pi^4}{32v^3}- \frac{27 \pi^6}{128 v^5} - \frac{27 \pi^6}{8 v^5}, \\
D_v &:= v-\frac{3\pi^2}{4v}-\frac{9\pi^4}{32v^3}- \frac{27 \pi^6}{128 v^5}.
\end{align*}
Similarly,  one can verify that
\begin{align}\label{bound for v+}
A_v \leq v^+ \leq B_v,
\end{align} 
where
\begin{align*}
A_v &:= v+\frac{3\pi^2}{4v}-\frac{9\pi^4}{32v^3}+ \frac{27 \pi^6}{128 v^5} - \frac{405 \pi^8}{2048 v^7}, \\
B_v &:=  v+\frac{3\pi^2}{4v}-\frac{9\pi^4}{32v^3}+ \frac{27 \pi^6}{128 v^5}.
\end{align*}
Now combining \eqref{bound for v-} and \eqref{bound for v+},  we obtain
\begin{align*}
-\frac{9\pi^4}{16 v^3}- \frac{27 \pi^6}{8 v^5} - \frac{405 \pi^8}{2048 v^7}  \leq v^+ + v^- - 2v \leq  -\frac{9\pi^4}{16 v^3}.
\end{align*}
Next, we make use of the fact that for $s<0$, $ 1+s < e^s < 1+s+s^2$, which gives 
\begin{align*}
1- \frac{9\pi^4}{16 v^3}- \frac{27 \pi^6}{8 v^5} - \frac{405 \pi^8}{2048 v^7}  \leq e^{v^+ + v^- -2v} \leq 1-\frac{9\pi^4}{16 v^3}+\frac{81 \pi^8}{256 v^6}.
\end{align*}
Our next aim is to look for a lower bound of $U_{\overline{a}}(n)$ and an upper bound for $V_{\overline{a}}(n)$. 
First,   we write 
\begin{align*}
U_{\overline{a}}(n) \geq \frac{\Theta}{\left(E_{I_2}(v) + \frac{31}{v^6} \right)^2}, \quad  V_{\overline{a}}(n) \leq \frac{\Delta}{\left(E_{I_2}(v) - \frac{31}{v^6} \right)^2},
\end{align*}
where $\Theta$ and $\Delta$ can be obtained by first expanding the numerators of $U_{\overline{a}}(n)$ and $V_{\overline{a}}(n)$ in \eqref{Definition U},  \eqref{Definition V},  and  utilizing the bounds \eqref{bound for v-} and \eqref{bound for v+} for $v^-$ and $v^+$,  respectively.  The exact expressions for $\Theta$ and $\Delta$ will be  clear from the below calculations of the numerators of $U_{\overline{a}}(n)$ and $V_{\overline{a}}(n)$.  Utilizing the definition \eqref{Def_EI2} of $E_{I_2}(s)$,   one can see that 
{\allowdisplaybreaks
\begin{align*}
&\left( E_{I_2}(v^-) - \frac{31}{v^{-^6}} \right) \left(E_{I_2}(v^+) - \frac{31}{v^{+^6}} \right) \nonumber \\
&= \frac{1}{v^{-^6}v^{+^6}} \left( v^{-^6}  - \frac{15}{8}v^{-^5}+ \frac{105}{128}v^{-^4} + \frac{315}{1024}v^{-^3} + \frac{10395}{32768}v^{-^2} + \frac{135135}{262144}v^- - 31 \right) \nonumber \\ 
&\hspace{5mm}\times \left( v^{+^6}  - \frac{15}{8}v^{+^5}+ \frac{105}{128}v^{+^4} + \frac{315}{1024}v^{+^3} + \frac{10395}{32768}v^{+^2} + \frac{135135}{262144}v^+ - 31 \right) \nonumber \\
& \geq \frac{1}{v^{-^6}v^{+^6}} \left( v^{-^6}  - \frac{15}{8}v^{-^4}D_v+ \frac{105}{128}v^{-^4} + \frac{315}{1024}v^{-^2}C_v + \frac{10395}{32768}v^{-^2} + \frac{135135}{262144}C_v - 31 \right) \nonumber \\ 
&\hspace{5mm}\times \left( v^{+^6}  - \frac{15}{8}v^{+^4}B_v+ \frac{105}{128}v^{+^4} + \frac{315}{1024}v^{+^2}A_v + \frac{10395}{32768}v^{+^2} + \frac{135135}{262144}A_v - 31 \right):= \Theta,   \\
&\left( E_{I_2}(v^-) + \frac{31}{v^{-^6}} \right) \left(E_{I_2}(v^+) + \frac{31}{v^{+^6}} \right) \nonumber \\
& = \frac{1}{v^{-^6}v^{+^6}} \left( v^{-^6}  - \frac{15}{8}v^{-^5}+ \frac{105}{128}v^{-^4} + \frac{315}{1024}v^{-^3} + \frac{10395}{32768}v^{-^2} + \frac{135135}{262144}v^- + 31 \right) \nonumber \\ 
&\hspace{5mm}\times \left( v^{+^6}  - \frac{15}{8}v^{+^5}+ \frac{105}{128}v^{+^4} + \frac{315}{1024}v^{+^3} + \frac{10395}{32768}v^{+^2} + \frac{135135}{262144}v^+ + 31 \right) \nonumber \\
& \leq \frac{1}{v^{-^6}v^{+^6}} \left( v^{-^6}  - \frac{15}{8}v^{-^4}C_v+ \frac{105}{128}v^{-^4} + \frac{315}{1024}v^{-^2}D_v + \frac{10395}{32768}v^{-^2} + \frac{135135}{262144}D_v + 31 \right) \nonumber \\ 
&\hspace{5mm}\times \left( v^{+^6}  - \frac{15}{8}v^{+^4}A_v+ \frac{105}{128}v^{+^4} + \frac{315}{1024}v^{+^2}B_v + \frac{10395}{32768}v^{+^2} + \frac{135135}{262144}B_v + 31 \right):= \Delta.  
\end{align*}}
Now we want to show that 
\begin{align*}
\frac{\Theta}{\left(E_{I_2}(v) + \frac{31}{v^6} \right)^2} \geq 1-\frac{309}{v^5}-\frac{535}{v^6}, \quad \frac{\Delta}{\left(E_{I_2}(v) - \frac{31}{v^6} \right)^2} \leq 1-\frac{308}{v^5}-\frac{286}{v^6}, \quad \text{for}~\text{all}~n \geq 2363.
\end{align*}
These are equivalent to verifying that 
\begin{align}
v^{12} \Theta - \left(1-\frac{309}{v^5}-\frac{535}{v^6} \right) \left( v^6 E_{I_2}(v)  + {31} \right)^2 \geq 0,  \quad \text{for}~\text{all}~n \geq 2363, \label{Required inequality for lower bound} \\
v^{12} \Delta - \left(1-\frac{308}{v^5}-\frac{286}{v^6} \right) \left(v^6 E_{I_2}(v) - {31} \right)^2 \leq 0,  \quad \text{for}~\text{all}~n \geq 2363.   \label{Required inequality for upper bound}
\end{align}
Substituting expressions of  $A_v, B_v, C_v, D_v, v^+$ and $v^-$ in $\Theta$ and $\Delta$ and expanding, we write the left hand side of \eqref{Required inequality for lower bound} and \eqref{Required inequality for upper bound} as follows: 
\begin{align}
v^{12} \Theta - \left(1-\frac{309}{v^5}-\frac{535}{v^6} \right) \left( v^6E_{I_2}(v) + {31} \right)^2 & = \frac{\sum_{i=0}^{25}\alpha_i v^i}{v^6 \left(v^4 - \frac{9\pi^4}{4}\right)^3},  \label{theta expansion} \\
v^{12} \Delta - \left(1-\frac{308}{v^5}-\frac{286}{v^6} \right) \left(v^6  E_{I_2}(v) - {31} \right)^2 & = \frac{\sum_{j=0}^{25}\beta_j v^j}{v^6 \left(v^4 - \frac{9\pi^4}{4}\right)^3},  \label{delta expansion}
\end{align}
where $\alpha_i's$ and $\beta_j's$ are some real numbers. One can easily verify that,   for all $n \geq 2$, $0 \leq i \leq 20$ and $0 \leq j \leq 21 $,
\begin{align*}
-|\alpha_i|v^i \geq -|\alpha_{21}| v^{21}, \quad |\beta_j|v^j \leq |\beta_{22}| v^{22}.
\end{align*}
Thus,  we have
\begin{align*}
\sum_{i=0}^{25}\alpha_i v^i &\geq - \sum_{i=0}^{21} |\alpha_i| v^i+\alpha_{22} v^{22} +  \alpha_{23} v^{23} +\alpha_{24} v^{24} +\alpha_{25} v^{25} \nonumber \\
&\geq -22|\alpha_{21}|v^{21} + \alpha_{22} v^{22} +  \alpha_{23} v^{23} +\alpha_{24} v^{24} +\alpha_{25} v^{25} := g_1(v),  \\
\sum_{j=0}^{25}\beta_j v^j &\leq  \sum_{j=0}^{22} |\beta_j| v^j+\beta_{23} v^{23} + \beta_{24} v^{24} + \beta_{25} v^{25} \nonumber \\
& \leq 23|\beta_{22}|v^{22} + \beta_{23} v^{23}+ \beta_{24} v^{24} + \beta_{25} v^{25} := g_2(v),
\end{align*}
where
{\allowdisplaybreaks
\begin{align*}
\alpha_{21}&= -\frac{2677185}{2048}-\frac{545363523 \pi^4}{262144} - \frac{8505 \pi^6}{8192} + \frac{15795 \pi^8}{1024},  \nonumber \\
\alpha_{22}&= \frac{242745}{128} + \frac{31185 \pi^4}{16384},   \nonumber \\
\alpha_{23}&=-\frac{5775}{32}-\frac{14175 \pi^4}{4096},   \\
\alpha_{24}&= -\frac{2991}{4}+\frac{3915\pi^4}{512},   \\
\alpha_{25}&= 309 -\frac{405 \pi^4}{128},   \\
\beta_{22}&=\frac{13095}{16}+ \frac{31185 \pi^4}{16384}-\frac{6075 \pi^6}{512},    \\
\beta_{23}&= \frac{2265}{8}-\frac{14175 \pi^4}{4096} + \frac{405 \pi^6}{64},   \\
\beta_{24}&= -745 + \frac{3915 \pi^4}{512},    \\
\beta_{25}&= 308- \frac{405 \pi^4}{128}.  
\end{align*}}
As the largest real root of $g_1^{\prime}(v)$ is $\approx 39.421$, it follows that $g_1^{\prime}(v) >0$ for all $v \geq 40 $. Consequently, $g_1(v)$ is strictly increasing for $v \geq 40$.   Moreover, as $g_1(41.23)> 0$, we deduce that $g_1(v) >  g_1(41.23) >0$ for all $v \geq 41.23 $. This, in turn, establishes that $g_1(v) > 0$ for all $n \geq 115$. In a similar manner, the largest real root of $g_2^{\prime}(v)$ occurs at $ \approx 179.206$. Since the leading coefficient of $g_2(v)$ is negative, we conclude that $g_2^{\prime}(v) < 0$ whenever $v \geq 180$, and thus $g_2(v)$ is strictly decreasing.   Moreover, as $g_2(187) < 0$, it follows that $g_2(v)$ remains negative for all $v \geq 187$. Consequently, we obtain $g_2(v) <0$ for all $n \geq 2363$.
Utilizing these facts in \eqref{theta expansion} and \eqref{delta expansion} along with the observation $v^4 - \frac{9\pi^4}{4} > 0$ for all $n >1$,  we finish the proof of \eqref{Required inequality for lower bound} and \eqref{Required inequality for upper bound}.  This completes the proof of Lemma \ref{bounds for product of bessel functions}.
\end{proof}

\begin{proof}[Theorem \ref{product of cubic overpartition bound}][]
Using Lemma \ref{bounds for overcubic in terms of main term}, we obtain the inequality
\begin{align}\label{overcubic product starting point}
\Omega(n) \frac{\left(1-\frac{1}{(n+1)^6} \right)\left(1-\frac{1}{(n-1)^6} \right)}{\left(1+\frac{1}{n^6} \right)^2} \leq \frac{\overline{a}(n+1)\overline{a}(n-1)}{(\overline{a}(n))^2} \leq \Omega(n) \frac{\left(1+\frac{1}{(n+1)^6} \right)\left(1+\frac{1}{(n-1)^6} \right)}{\left(1-\frac{1}{n^6} \right)^2}, 
\end{align}
where
\begin{align*}
\Omega(n):= \frac{M_{\overline{a}}(n+1) M_{\overline{a}}(n-1)}{(M_{\overline{a}}(n))^2}.
\end{align*}
Next, by using \eqref{Definition of main term} for   $M_{\overline{a}}(n)$,  we see that 
\begin{align*}
\Omega(n) = \frac{n^2}{(n+1)(n-1)} \frac{I_2\left(\pi \sqrt{\frac{3(n+1)}{2}} \right) I_2\left(\pi \sqrt{\frac{3(n-1)}{2}} \right)}{I_2\left(\pi \sqrt{\frac{3n}{2}} \right)^2}.
\end{align*}
Rewriting the above expression in terms of $v,  v^+,  v^-$ \eqref{Defn of v, v+,v-},   we obtain
\begin{align*}
\Omega(n) = \frac{v^4}{v^{+^2}v^{-^2}} \frac{I_2(v^+)I_2(v^-)}{(I_2(v))^2}.
\end{align*} 
Applying the bounds from Lemma \ref{bounds for product of bessel functions}, we deduce that,  for all $n \geq 2363$,
\begin{align*}
\frac{v^5}{(v^+ v^-)^{\frac{5}{2}}} \Xi_{\overline{a}(n)} \leq \Omega(n) \leq \frac{v^5}{(v^+ v^-)^{\frac{5}{2}}} \Psi_{\overline{a}(n)},
\end{align*}
where $\Xi_{\overline{a}(n)}$ and $\Psi_{\overline{a}(n)}$ are defined as in \eqref{definition Xi} and \eqref{definition Psi}, respectively. Moreover, for all $n \geq 7$, the factor $\frac{v^5}{(v^+ v^-)^{\frac{5}{2}}}$ can be bounded as
\begin{align*}
1+ \frac{45 \pi^4}{16 v^4} + \frac{3645 \pi^8}{512 v^8}   \leq \frac{v^5}{(v^+ v^-)^{\frac{5}{2}}} \leq 1+ \frac{45 \pi^4}{16 v^4} + \frac{8 \pi^8}{ v^8}. 
\end{align*}
Consequently, for all $n \geq 2363$,  we have
\begin{align*}
\left(1+ \frac{45 \pi^4}{16 v^4} + \frac{3645 \pi^8}{512 v^8} \right) \Xi_{\overline{a}(n)}  \leq \Omega(n)   \leq \Psi_{\overline{a}(n)} \left( 1+ \frac{45 \pi^4}{16 v^4} + \frac{8 \pi^8}{ v^8} \right).
\end{align*}
Now employing \eqref{definition Xi},  \eqref{definition Psi} and after further simplification,  it yields
\begin{align}\label{bounds of omega}
 1-\frac{9\pi^4}{16 v^3} + \frac{45 \pi^4}{16v^4}-\frac{309}{v^5}-\frac{27 \pi^6}{8v^5} - \frac{535}{v^6} - \frac{\pi^8}{ v^6} \leq \Omega(n)   \leq 1-\frac{9\pi^4}{16 v^3} + \frac{45 \pi^4}{16v^4}-\frac{308}{v^5} - \frac{286}{v^6} + \frac{81\pi^8}{256 v^6}.
\end{align}
To complete the proof, it remains to estimate the multiplicative corrections arising from \eqref{overcubic product starting point},  namely,  
\begin{align*}
 \frac{\left(1-\frac{1}{(n+1)^6} \right)\left(1-\frac{1}{(n-1)^6} \right)}{\left(1+\frac{1}{n^6} \right)^2}, \quad \text{and} \quad \frac{\left(1+\frac{1}{(n+1)^6} \right)\left(1+\frac{1}{(n-1)^6} \right)}{\left(1-\frac{1}{n^6} \right)^2}.
\end{align*}
Expressing these in terms of $v,  v^+,  v^-$ and we require a lower bound for 
\begin{align*}
\frac{\left(1-\frac{729 \pi^{12}}{64v^{+^{12}}} \right)\left(1-\frac{729 \pi^{12}}{64v^{-^{12}}} \right)}{\left(1+\frac{729 \pi^{12}}{64v^{12}} \right)^2},
\end{align*}
and an upper bound for 
\begin{align*}
\frac{\left(1+\frac{729 \pi^{12}}{64v^{+^{12}}} \right)\left(1+\frac{729 \pi^{12}}{64v^{-^{12}}} \right)}{\left(1-\frac{729 \pi^{12}}{64v^{12}} \right)^2}.
\end{align*}
Using series expansions for the denominators of the above two expressions and the bounds \eqref{bound for v-} and \eqref{bound for v+} for $v^-,v^+$,  one obtains the following estimates valid for all $n \geq 1$,
\begin{align}\label{lower bound for power 6 terms}
\frac{\left(1-\frac{729 \pi^{12}}{64v^{+^{12}}} \right)\left(1-\frac{729 \pi^{12}}{64v^{-^{12}}} \right)}{\left(1+\frac{729 \pi^{12}}{64v^{12}} \right)^2} \geq 1-\frac{729\pi^{12}}{v^6},
\end{align}
\begin{align}\label{upper bound for power 6 terms}
\frac{\left(1+\frac{729 \pi^{12}}{64v^{+^{12}}} \right)\left(1+\frac{729 \pi^{12}}{64v^{-^{12}}} \right)}{\left(1-\frac{729 \pi^{12}}{64v^{12}} \right)^2} \leq 1+\frac{729\pi^{12}}{v^6}.
\end{align}
Finally, combining \eqref{bounds of omega}, \eqref{lower bound for power 6 terms} and \eqref{upper bound for power 6 terms} with \eqref{overcubic product starting point}, we arrive at the desired bounds, thereby completing the proof of Theorem \ref{product of cubic overpartition bound}. 
\end{proof}
\begin{proof}[Theorem \ref{log concavity of cubic overpartitions}][]
Utilizing Theorem \ref{product of cubic overpartition bound}, one can observe that
\begin{align}\label{log concavity for a(n)}
1-\frac{\overline{a}(n+1)\overline{a}(n-1)}{(\overline{a}(n))^2} \geq  \frac{9\pi^4}{16 v^3} - \frac{45 \pi^4}{16 v^4} +\frac{308}{v^5} + \frac{286}{v^6} - \frac{81 \pi^8}{256 v^6} - \frac{729 \pi^{12}}{16 v^6},
\end{align}
for $n \geq 2363$. 
Now we consider the polynomial
 \begin{align*}
 g_3(v) = \frac{9\pi^4}{16 }v^3 - \frac{45 \pi^4}{16 }v^2 + 308 v + 286 - \frac{81 \pi^8}{256 } - \frac{729 \pi^{12}}{16 }.
\end{align*}
A direct computation shows that the real roots of $g_3^{\prime}(v)$ are $\approx 0.715853$ and $\approx 2.61748$. Hence, $g_3^{\prime}(v) > 0$ for all $v \geq 3 $, which implies that $g_3(v)$ is increasing for $v \geq 3$.   Since $g_3(93.28) > 0$, it follows that $g_3(v) > 0$ when $v \geq 93.28$.  This concludes that $g_3(v) > 0$ whenever $n \geq 588$. Consequently, we obtain 
\begin{align}\label{bound for g3}
\frac{g_3(v)}{v^6} = \frac{9\pi^4}{16 v^3} - \frac{45 \pi^4}{16 v^4} +\frac{308}{v^5} + \frac{286}{v^6} - \frac{81 \pi^8}{256 v^6} - \frac{729 \pi^{12}}{16 v^6} > 0 
\end{align} 
 for all $n \geq 588$.  Finally,  using \eqref{bound for g3} in \eqref{log concavity for a(n)},  we prove the log concavity for $\overline{a}(n)$ when $n \geq 2363$.  
 Using Mathematica,  one can check that $\overline{a}(n)$ also satisfies log-concavity for $10 \leq n \leq 2363$ and hence $\overline{a}(n)$ attains log concavity for all $n \geq 10$.  This finishes the proof of Theorem \ref{log concavity of cubic overpartitions}.  
\end{proof}

\subsection{Higher order Tur\'{a}n inequalities for cubic overpartitions}

\begin{proof}[Theorem \ref{hyperbolicity of Jensen polynomial}][]
For any $j \in \mathbb{N}\cup \{0\}$,  utilize \eqref{asymptotic cubic overpartition} to see
\begin{align*}
\overline{a}(n+j) \sim \frac{3^{\frac{3}{4}}}{2^{\frac{19}{4}}(n+j)^{\frac{5}{4}}} e^{\pi \sqrt{\frac{3(n+j)}{2}}}, \quad \text{as} \quad  n \to \infty.
\end{align*}
This gives
\begin{align*}
\frac{\overline{a}(n+j)}{\overline{a}(n)} \sim \frac{n^{\frac{5}{4}}}{(n+j)^{\frac{5}{4}}} e^{\pi \sqrt{\frac{3(n+j)}{2}} - \pi \sqrt{\frac{3n}{2}}}.
\end{align*}
Thus,  taking $\log$ on both sides,   we have
\begin{align*}
\log \left(\frac{\overline{a}(n+j)}{\overline{a}(n)}\right) &\sim  \pi \sqrt{\frac{3}{2}} \left( \sqrt{n+j} - \sqrt{n} \right) - \frac{5}{4} \log\left( \frac{n+j}{n} \right).
\end{align*}
Further,  one can simplify the above right hand side expression as follows:
\begin{align*}
 \pi \sqrt{\frac{3}{2}} \left( \sqrt{n+j} - \sqrt{n} \right) - \frac{5}{4} \log\left( \frac{n+j}{n} \right) 
& =  \pi \sqrt{\frac{3}{2}} \sum_{i=1}^\infty {1/2 \choose i}\frac{j^i}{n^{i-1/2}} +  \frac{5}{4} \sum_{i=1}^\infty \frac{(-1)^i j^i}{in^i}  \nonumber \\
&=  \left(\frac{\pi}{2} \sqrt{\frac{3}{2n}}  -  \frac{5}{4n} \right)j - \left(\frac{\pi}{8} \sqrt{\frac{3}{2n^3}}-\frac{5}{8n^2} \right)j^2 \nonumber \\
& + \sum_{i=3}^\infty \left( \frac{\pi}{n^{i-1/2}} \sqrt{\frac{3}{2}}{1/2 \choose i} + \frac{5(-1)^i}{4in^i} \right)j^i.
\end{align*}
Now we consider $A(n)=\frac{\pi}{2} \sqrt{\frac{3}{2n}}-\frac{5}{4n},~ \delta(n)= \sqrt{\frac{\pi}{8} \sqrt{\frac{3}{2n^3}}-\frac{5}{8n^2}}$, and for $i \geq3$,   $$g_i(n)=\frac{\pi}{n^{i-1/2}} \sqrt{\frac{3}{2}}{1/2 \choose i} + \frac{5(-1)^i}{4in^i}.$$
Note that for $3\leq i \leq d$,  one can easily show that 
\begin{align*}
\lim_{n \rightarrow \infty}\frac{g_i(n)}{(\delta(n))^i} 
&=\lim_{n \rightarrow \infty} \frac{\frac{\pi}{n^{i-1/2}} \sqrt{\frac{3}{2}}{1/2 \choose i} + \frac{5(-1)^i}{4in^i}}{\left(\frac{\pi}{8} \sqrt{\frac{3}{2n^3}}-\frac{5}{8n^2}\right)^{i/2}} =0.  
\end{align*}
Moreover,  we can check that $\lim_{n \rightarrow \infty}\frac{g_i(n)}{(\delta(n))^d} = 0$, for all $i \geq d+1.$ 
Thus,  the sequences $\{\overline{a}(n)\},    \{A(n)\},  \{ \delta(n)\},  \{g_i(n)\}$ satisfy hypotheses of Theorem \ref{theorem of Griffin ono rolen and zagier}.  Hence,  it follows that the Jenson polynomials associated with cubic overpartition function can be expressed in terms of Hermite polynomials which are hyperbolic for sufficiently large $n$. This completes the proof of Theorem \ref{hyperbolicity of Jensen polynomial}.

\end{proof}

\subsection{Log-subadditivity and general log-concavity for cubic overpartitions}
In this subsection, we establish the log-subadditivity and a general log-concavity for the cubic overpartition function $\overline{a}(n)$. Before proving Theorem \ref{subadditivity for overcubic partitions}, we state a lemma that will be useful in deriving the log-subadditivity for $\overline{a}(n)$. In Remark \ref{remark}, we noted that in Lemma \ref{bounds for overcubic in terms of main term}, the term $\frac{1}{n^6}$ can be replaced by $\frac{1}{n^\alpha}$ for any $\alpha > 0$. Here, we present a result corresponding to $\alpha = \tfrac{1}{2}$. For the sake of clarity, we also provide a proof, since it requires a sharper bound for $I_2(s)$.

 \begin{lemma}\label{bounds for overcubic for subadditivity}
For all $n \geq 11$,  we have
\begin{align*}
M_{\overline{a}}(n) \left(1-\frac{1}{\sqrt{n}} \right) \leq \overline{a}(n) \leq M_{\overline{a}}(n) \left(1+\frac{1}{\sqrt{n}} \right),
\end{align*}
where 
\begin{align}\label{Definition of main term}
M_{\overline{a}}(n) = \frac{3\pi}{16n\sqrt{2}} I_2\left(\pi \sqrt{\frac{3n}{2}} \right).
\end{align}
\end{lemma}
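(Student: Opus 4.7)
The plan is to adapt the proof strategy of Lemma~\ref{bounds for overcubic in terms of main term} almost verbatim, modifying the auxiliary analytic bound to accommodate the much sharper exponent $\tfrac{1}{2}$ in place of $6$. The key structural observation is that the ratio of the error term to the main term is dominated by a factor of $e^{\pi\sqrt{n}(1-\sqrt{3})/\sqrt{2}}$ up to polynomial corrections, and this decays exponentially in $\sqrt{n}$; so once $n$ is even moderately large, replacing the exponent $6$ by any fixed positive exponent (in particular $\tfrac12$) is harmless. The genuine difficulty is only for small $n$.

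Following the same setup, I would write $\overline{a}(n) = M_{\overline{a}}(n) + E_{\overline{a}}(n)$ as in equation~\eqref{split of a(n)} and aim to prove
\begin{align*}
G_{\overline{a}}(n) := \frac{|E_{\overline{a}}(n)|}{M_{\overline{a}}(n)} \leq \frac{1}{\sqrt{n}}, \qquad n \geq 11.
\end{align*}
The numerator is estimated via the bound \eqref{bound for error}. For the denominator, the lower estimate \eqref{lower bound for I2} applied with $s = \pi\sqrt{3n/2}$ is available whenever $s \geq 30$, i.e.\ for $n \geq 61$. In that range it yields an explicit lower bound of the form
\begin{align*}
M_{\overline{a}}(n) \geq \frac{3\pi}{16 n\sqrt{2}} \cdot \frac{e^{\pi\sqrt{3n/2}}}{\sqrt{2\pi^2\sqrt{3n/2}}}\left(1+\frac{2}{\pi\sqrt{3n/2}}\right)^{-1}.
\end{align*}
Combining these two estimates gives an inequality of the form $G_{\overline{a}}(n) \leq c(n)\, e^{\pi\sqrt{n}(1-\sqrt{3})/\sqrt{2}}$ with $c(n)$ an explicit elementary function of $n$.

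I would then argue, exactly as in the proof of Lemma~\ref{bounds for overcubic in terms of main term}, by defining an auxiliary function $\widetilde{H}(x) := \sqrt{x}\, c(x)\, e^{\pi\sqrt{x}(1-\sqrt{3})/\sqrt{2}}$, differentiating once to locate the unique positive critical point, verifying that $\widetilde{H}$ is eventually decreasing, and checking that $\widetilde{H}(n_0) < 1$ at a single explicit threshold $n_0$ (which I expect to land in the range $n_0 \approx 60$--$100$, since the exponential decay is so strong). This settles the inequality for all $n \geq n_0$.

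The remaining task, and the main obstacle, is the finite verification for $11 \leq n < n_0$, which cannot be reached by the lower bound \eqref{lower bound for I2} (whose hypothesis $s \geq 30$ already excludes $n \leq 60$). For these values I would compute $\overline{a}(n)$ directly from the generating function \eqref{generating function of cubic overpartition} and evaluate $M_{\overline{a}}(n) = \tfrac{3\pi}{16n\sqrt{2}} I_2(\pi\sqrt{3n/2})$ numerically, confirming case by case that $|\overline{a}(n) - M_{\overline{a}}(n)| \leq M_{\overline{a}}(n)/\sqrt{n}$. The sharpness of the starting index $n=11$ (as opposed to the $n \geq 393$ threshold in Lemma~\ref{bounds for overcubic in terms of main term}) means that this numerical portion becomes the substantive content of the proof, while the analytic estimate serves only to cap off the infinite tail.
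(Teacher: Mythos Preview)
Your approach is correct and follows the same overall structure as the paper's proof, but the paper handles the small-$n$ range differently. Instead of using the lower bound \eqref{lower bound for I2} (which requires $s\geq 30$, hence $n\geq 61$) and then resorting to direct numerical verification of $\overline{a}(n)$ for $11\leq n\leq 60$, the paper invokes a sharper lower bound for the Bessel function, namely $I_2(s) > \tfrac{e^s}{\sqrt{2\pi s}}(1-\tfrac{2}{s})$ valid for all $s\geq 10$ (taken from \cite[Equation~(4.24)]{cubicturan}). With $s=\pi\sqrt{3n/2}$ this hypothesis is met already for $n\geq 7$, and after combining with the elementary inequality $(1-\tfrac{2}{s})(1+\tfrac{3}{s})>1$ the paper obtains $G_{\overline{a}}(n)\leq 66\,e^{\pi\sqrt{n}(1-\sqrt{3})/\sqrt{2}}$ for all $n\geq 7$; the inequality $66\,e^{\pi\sqrt{n}(1-\sqrt{3})/\sqrt{2}}\leq 1/\sqrt{n}$ is then checked to hold for all $n\geq 11$. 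Thus the paper's argument is entirely analytic and avoids any case-by-case computation of $\overline{a}(n)$. Your route works equally well but trades that analytic refinement for a finite numerical check.
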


\begin{proof}
Following the line of argument used in Lemma \ref{bounds for overcubic in terms of main term}, our main goal here is to prove that, for $n \geq 11$,
\begin{align*}
G_{\overline{a}}(n) \leq \frac{1}{\sqrt{n}},
\end{align*}
where $G_{\overline{a}}(n)$ is defined in \eqref{Definition Ga(n)}.  Now we employ more effective bounds \cite[Equations (4.24)-(4.25)]{cubicturan} for $I_2(s)$,   which asserts that,   for all $s\geq 10$,
\begin{align}
I_2(s) &  > \frac{e^s}{\sqrt{2\pi s}} \left(1 - \frac{2}{s}\right),\label{improved lower bound for I2(s)}\\
I_2(s) &  < \frac{e^s}{\sqrt{2\pi s}} \left(1 - \frac{15}{8s} + \frac{2}{s^2}\right).\label{improved upper bound for I2(s)}
\end{align}
We also make use of an elementary inequality, valid for all $s\geq 10$,
\begin{align}\label{usefull inequality}
\left(1 - \frac{2}{s}\right)\left(1 + \frac{3}{s}\right) = 1 + \frac{s - 6}{s^2} > 1.
\end{align}
Utilizing \eqref{improved lower bound for I2(s)} and \eqref{usefull inequality} in \eqref{Definition Ga(n)}, we obtain,   for all $n \geq 7$, 
\begin{align*}
G_{\overline{a}}(n) \leq 66 e^{\pi \sqrt{n} \left(\frac{1-\sqrt{3}}{\sqrt{2}}\right)}.
\end{align*}
Finally,  applying the same argument as in Lemma \ref{bounds for overcubic in terms of main term},  one can deduce that
\begin{align*}
66 e^{\pi \sqrt{n} \left(\frac{1-\sqrt{3}}{\sqrt{2}}\right)}  \leq \frac{1}{\sqrt{n}},
\end{align*}
for all $n \geq 11$. This completes the proof of Lemma \ref{bounds for overcubic for subadditivity}.
\end{proof}

\begin{proof}[Theorem \ref{subadditivity for overcubic partitions}][]
We begin by obtaining an explicit upper and lower bound for $\overline{a}(n)$. From Lemma \ref{bounds for overcubic for subadditivity}, we have
\begin{align*}
\overline{a}(n) \geq  \frac{3\pi}{16n\sqrt{2}} I_2\left(\pi \sqrt{\frac{3n}{2}} \right)  \left(1-\frac{1}{\sqrt{n}} \right).
\end{align*} 
Substituting the bound \eqref{improved lower bound for I2(s)} of $I_2(s)$ into the above expression, we obtain 
\begin{align}\label{lower bound for a(n) for subadditivity}
\overline{a}(n) \geq \frac{3^{\frac{3}{4}}}{2^{\frac{19}{4}} n^{\frac{5}{4}}} e^{\pi \sqrt{\frac{3n}{2}}} \left(1 - \frac{2}{\pi} \sqrt{\frac{2}{3n}}\right) \left(1-\frac{1}{\sqrt{n}} \right) \geq \frac{3^{\frac{3}{4}}}{2^{\frac{19}{4}} n^{\frac{5}{4}}} e^{\pi \sqrt{\frac{3n}{2}}} \left( 1-\frac{8}{5\sqrt{n}} \right).
\end{align}
Here we have used the fact that $1+ \frac{2}{\pi} \sqrt{\frac{2}{3}} < \frac{8}{5}$.  Similarly,  applying the upper bound \eqref{improved upper bound for I2(s)} for $I_2(s)$ yields,
\begin{align}\label{upper bound for a(n) for subadditivity}
\overline{a}(n) \leq \frac{3^{\frac{3}{4}}}{2^{\frac{19}{4}} n^{\frac{5}{4}}} e^{\pi \sqrt{\frac{3n}{2}}} \left( 1 + \frac{1}{\sqrt{n}} \right).
\end{align}
The inequalities \eqref{lower bound for a(n) for subadditivity}-\eqref{upper bound for a(n) for subadditivity} are true for $n \geq 11$ and we have additionally verified by direct computation in Mathematica that they also hold for $1\leq n \leq 10$.  Let us suppose that $1 \leq m \leq n$ and $n=Bm$ with $B\geq 1$.  Note that $B \in \mathbb{Q}$,  whereas $n, m \in \mathbb{N}$.   Utilizing \eqref{lower bound for a(n) for subadditivity}, we obtain
\begin{align*}
\overline{a}(m)\overline{a}(Bm) \geq \frac{3^{\frac{3}{2}}}{2^{\frac{19}{2}} B^{\frac{5}{4}} m^{\frac{5}{2}}} e^{\pi \sqrt{\frac{3}{2}} \left(\sqrt{m}+\sqrt{Bm} \right)} \left( 1-\frac{8}{5\sqrt{m}} \right) \left( 1-\frac{8}{5\sqrt{Bm}} \right),
\end{align*}
while employing \eqref{upper bound for a(n) for subadditivity}, we deduce
\begin{align*}
\overline{a}(m+Bm) \leq \frac{3^{\frac{3}{4}}}{2^{\frac{19}{4}} (m+Bm)^{\frac{5}{4}}} e^{\pi \sqrt{\frac{3(m+Bm)}{2}}} \left( 1 + \frac{1}{\sqrt{m+Bm}} \right).
\end{align*}
Hence, apart from finitely many exceptional cases, it suffices to determine conditions on $m$ and $B$ such that
\begin{align}\label{required inequality for subadditivity}
e^{\pi \sqrt{\frac{3}{2}} \left(\sqrt{m} + \sqrt{Bm} - \sqrt{m+Bm}\right)} \geq \frac{m^{\frac{5}{4}} 2^{\frac{19}{4}}}{3^{\frac{3}{4}}} \left( \frac{B}{B+1} \right)^{\frac{5}{4}} \frac{\left( 1 + \frac{1}{\sqrt{m+Bm}} \right)}{\left( 1-\frac{8}{5\sqrt{m}} \right) \left( 1-\frac{8}{5\sqrt{Bm}} \right)}.
\end{align}
For notational convenience,  we define
\begin{align*}
L_{\overline{a}}(B)&:= \pi \sqrt{\frac{3}{2}} \left(\sqrt{m} + \sqrt{Bm} - \sqrt{m+Bm} \right),\\
H_{\overline{a}}(B)&:= \left( \frac{B}{B+1} \right)^{\frac{5}{4}} \frac{\left( 1 + \frac{1}{\sqrt{m+Bm}} \right)}{\left( 1-\frac{8}{5\sqrt{m}} \right) \left( 1-\frac{8}{5\sqrt{Bm}} \right)}.
\end{align*}
Taking logarithm on both sides of \eqref{required inequality for subadditivity} yields an equivalent condition,  namely,  
\begin{align}\label{general inequality in L and H}
L_{\overline{a}}(B) \geq  \log\left( \frac{m^{\frac{5}{4}} 2^{\frac{19}{4}}}{3^{\frac{3}{4}}} \right) + \log \left( H_{\overline{a}}(B) \right).
\end{align}
As a function of $B$,  one can check that $L_{\overline{a}}(B)$ is increasing for any $m \geq 1$,  whereas $H_{\overline{a}}(B)$ is decreasing for $m \geq 3$. Consequently,  we have
\begin{align*}
L_{\overline{a}}(B) \geq L_{\overline{a}}(1), \quad \mathrm{and} \quad \log \left( H_{\overline{a}}(1) \right) \geq \log \left( H_{\overline{a}}(B) \right).
\end{align*}
Thus, for all $m \geq 3$,  it suffices to verify
\begin{align}\label{inequality in L and H at 1}
L_{\overline{a}}(1) \geq \log\left( \frac{m^{\frac{5}{4}} 2^{\frac{19}{4}}}{3^{\frac{3}{4}}} \right) + \log \left( H_{\overline{a}}(1) \right).
\end{align}
By direct evaluation of $L_{\overline{a}}(1)$ and $H_{\overline{a}}(1)$, we confirm that \eqref{inequality in L and H at 1} holds for all $m \geq 8$. Therefore, to complete the argument, it remains to check \eqref{general inequality in L and H} for finitely many values $3 \leq m \leq 7$ and for these integers,  let $B_m$ denote the real number satisfying
\begin{align*}
L_{\overline{a}}(B_m) = \log\left( \frac{m^{\frac{5}{4}} 2^{\frac{19}{4}}}{3^{\frac{3}{4}}} \right) + \log \left( H_{\overline{a}}(B_m) \right).
\end{align*}
The corresponding values of $B_m$ for $3 \leq m \leq 7$ are displayed in the below Table \ref{Values of B_m}.

\begin{table}[h]
\caption{Values of $B_m$}\label{Values of B_m}
\renewcommand{\arraystretch}{1}
{\begin{tabular}{|l|l|l|l|l|l|l|}
\hline
$m$ & $3$ & $4$ & $5$ & $6$ & $7$  \\ 
\hline
$B_m$ & $369.385\dots$ & $6.011 \dots$ & $2.548 \dots$ & $1.558 \dots$ & $1.105 \dots$ \\
\hline
\end{tabular}}
\end{table}
By the preceding discussion, if $n=Bm\geq m$ is an integer with $B >B_m$, then \eqref{general inequality in L and H} is satisfied, thereby establishing the theorem in these cases.
Thus, the only remaining possibilities are finitely many exceptional pairs of integers with $ 3 \leq m \leq 7$ and $1 \leq \frac{n}{m} = B \leq B_m$. These cases can be directly verified using Mathematica.  This completes the proof of Theorem \ref{subadditivity for overcubic partitions} for $n,  m \geq 3$.  
Finally,  corresponding to $m=1,2$,  it remain to show that, 
\begin{align}
2 \overline{a}(n) &> \overline{a}(n+1),  \forall~ n \neq 1, 3,  \label{form m=1} \\
4 \overline{a}(n) &> \overline{a}(n+2),  \forall~ n \neq 1.\label{for m=2}
\end{align}
To prove \eqref{form m=1}, we utilize the bounds \eqref{lower bound for a(n) for subadditivity} and \eqref{upper bound for a(n) for subadditivity} for $\overline{a}(n)$,  which is equivalent to show that the following function
\begin{align*}
\log (2) + \pi \sqrt{\frac{3}{2}} \left( \sqrt{n} - \sqrt{n+1} \right) - \frac{5}{4} \log \left( \frac{n}{n+1} \right) - \log \left( 1 + \frac{1}{\sqrt{n+1}} \right) + \log \left( 1 - \frac{8}{5\sqrt{n}} \right)
\end{align*} 
is positive. 
One can check that, the above function is positive for all $n \geq 42$, which proves \eqref{form m=1} in this range.  Moreover,  for $1 \leq n \leq 41$, except for $n=1,3$, we find that \eqref{form m=1} holds via numerical computation on Mathematica. In a similar manner,  one can prove \eqref{for m=2}.  This finishes the proof of Theorem \ref{subadditivity for overcubic partitions}.  
\end{proof}

\begin{proof}[Theorem \ref{general log concavity}][]
It is known that log-concavity is equivalent to strong log-concavity,  readers can see \cite{S88}.  Moreover,  if $\{b(k)\}$  satisfies $b(k)^2 > b(k-1) b(k+1)$ for $k\geq k_0>0$,  
then one can show that
\begin{align*}
b(\ell - i) b(k+i) > b(k) b(\ell)
\end{align*}
holds for all $k_0 \leq k <  \ell$ and $0 < i < \ell-k$. Furthermore, Theorem \ref{log concavity of cubic overpartitions} establishes that the cubic overpartition function $\overline{a}(n)$ satisfies strict  log-concave for $n \geq 10$. Thus, by substituting $k=n-m,~\ell=n+m$ and $i=m$, we obtain
\begin{align}\label{general log concavity inequality}
\overline{a}(n)^2 > \overline{a}(n-m) \overline{a}(n+m)
\end{align}
for all $n > m>1$ and $n-m \geq 10$. Accordingly, it suffices to establish \eqref{general log concavity inequality} in the remaining cases,  i.e.,   $1\leq n-m \leq 9$. To resolve this, we aim to show that
\begin{align}\label{required inequality to prove for general log concavity}
\overline{a}(n)^2 \geq \overline{a}(m+1)^2 > \overline{a}(9)\overline{a}(9+2m) \geq \overline{a}(n-m) \overline{a}(n+m),
\end{align}
holds for all $1\leq n-m \leq 9$ and  $m \geq 37$. The remaining finitely many cases, namely those with $1\leq n-m \leq 9$ and  $m<37$ can be checked numerically.  Since $n \geq m+1$, it follows immediately that
\begin{align*}
\overline{a}(n)^2 \geq \overline{a}(m+1)^2.
\end{align*}
Moreover, as $n-m \leq 9$ and $n+m \leq 9+2m$,  so we have
\begin{align*}
\overline{a}(9)\overline{a}(9+2m) \geq \overline{a}(n-m) \overline{a}(n+m).
\end{align*}
Thus,  both the first and third inequalities in \eqref{required inequality to prove for general log concavity} hold trivially. It remains to establish that
\begin{align}\label{inequality in m}
\overline{a}(m+1)^2 > \overline{a}(9)\overline{a}(9+2m),
\end{align}
for all $m \geq 37$. Taking logarithm on both sides of \eqref{inequality in m},  we see that it is equivalent to showing
\begin{align}\label{inequality in log}
2 \log(\overline{a}(m+1)) - \log(\overline{a}(9)) - \log(\overline{a}(9+2m)) >0,
\end{align}
for all $m \geq 37$. To address this,   we employ a lower and upper bound for $\overline{a}(n)$ derived in \eqref{lower bound for a(n) for subadditivity} and \eqref{upper bound for a(n) for subadditivity}, respectively. Utilizing these estimates together with the fact that $\overline{a}(9)=470$, we obtain that the left-hand side of \eqref{inequality in log} is bounded below by
\begin{align*}
& 2 \log\left(\frac{3^{\frac{3}{4}}}{(m+1)^{\frac{5}{4}} 2^{\frac{19}{4}}}\right) + 2\pi \sqrt{\frac{3(m+1)}{2}} + 2 \log\left(1 - \frac{8}{5\sqrt{m+1}}\right) - \log(470)\\ 
&\quad \quad- \log\left(\frac{3^{\frac{3}{4}}}{(9+2m)^{\frac{5}{4}} 2^{\frac{19}{4}}}\right) - \pi \sqrt{\frac{3(9+2m)}{2}} - \log\left(1 + \frac{1}{\sqrt{9+2m}}\right),
\end{align*}
for all $m \geq 10.$ A straightforward calculation shows that this expression is positive for all $m \geq 40$, thereby proving \eqref{inequality in log} in this range. For the remaining values of $m$,  namely,  for $m=37, 38, 39$,  the inequality \eqref{inequality in m} can be verified numerically using Mathematica.  Thus, \eqref{inequality in m} holds  for all $m \geq 37$, which establishes \eqref{required inequality to prove for general log concavity}. This completes the proof of Theorem \ref{general log concavity}.

\end{proof}

\section{Numerical verification of Theorem \ref{exact formula for overcubic partition theorem}} \label{verification table}
To illustrate the accuracy of our exact formula for $\overline{a}(n)$, we provide a numerical verification by computing explicit values of $\overline{a}(n)$ using the first five terms of both series in the formula \eqref{exact formula for overcubic partition equation} and comparing them with the actual cubic overpartition counts.

\begin{table}[h]
\caption{Verification of Theorem \ref{exact formula for overcubic partition theorem}}\label{Table for cubic overpartition}
\renewcommand{\arraystretch}{1}
{\begin{tabular}{|l|l|l|l|}
\hline
 $n$ &  Exact value  & Value from our result  \\ 
\hline
$22$ & $110012$&$110011.99958$ \\
\hline
$12$&$2020$ &$2020.0026$ \\
\hline
$18$&   $ 24962$  & $24961.9983$ \\
\hline
$87$& $1166034258272$ & $1166034258271.996$  \\
\hline
\end{tabular}}
\end{table}

Further,  we compute  $\overline{a}(100)$  numerically by considering contributions of the first five terms of both series present in \eqref{exact formula for overcubic partition equation}: 
\begin{align*}
13080870093246.877  \\
+ 957724.348\\
- 49.363\\
-0.170\\
+0.203\\
+0.005\\
+0.040\\
+0.001\\
+0.001\\
+0.001\\
\hline \\
=13080871050921.943
\end{align*}
The exact value $ \overline{a}(100) = 13080871050922$.

\section{Concluding remarks}

The main objective of this paper is to apply the Hardy-Ramanujan-Rademacher circle method to derive a Rademacher-type infinite series representation for cubic overpartitions. Mainly, we show that $\overline{a}(n)$ can be expressed as the sum of two absolutely convergent series involving Kloosterman-type sums and Bessel functions,  see Theorem \ref{exact formula for overcubic partition theorem}. Using bounds for Kloosterman-type sums and modified Bessel functions of the first kind, we obtain an effective estimate for the error term, which yields the log-concavity of the cubic overpartition function,  see Theorem \ref{log concavity of cubic overpartitions}. Furthermore, by applying a general result of Griffin, Ono, Rolen, and Zagier, we establish higher-order Turán inequalities for the cubic overpartition function,  see Theorem \ref{hyperbolicity of Jensen polynomial}.  In addition, motivated from the work of Bressenrodt and Ono,  we prove the log-subadditivity of the cubic overpartition function,  see Theorem~\ref{subadditivity for overcubic partitions}.  
 We also deduce generalized log-concavity for cubic overpartitions,  see Theorem \ref{general log concavity},  inspired from the work of DeSalvo and Pak.  
We conclude with a few additional observations that might be of independent interest to the reader.

We would like to point out that the generating function \eqref{generating function of cubic overpartition} for the cubic overpartition function is a weakly holomorphic modular form of weight $-1 $ over $\Gamma_{0}(4)$.  Therefore,  it would be interesting to derive our Theorem \ref{exact formula for overcubic partition theorem} using
a result of Zuckerman \cite{Zucker39} and Bringmann-Ono \cite{BO2011}.

The investigation of the hyperbolicity of Jensen polynomials, particularly those associated with the partition function, has received significant attention in recent years. Desalvo and Pak \cite{DeSalvo2015} established that $J_{p}^{2,n-1}$ is hyperbolic for $n \geq 25$. Subsequently, Chen, Jia and Wang \cite{CJW2019} showed that $J_{p}^{3,n-1}$ is hyperbolic for $n\geq 94$ and conjectured that for each $d>1$, there exist a minimal integer $N_p(d)$ such that $J_{p}^{d,n-1}$ is hyperbolic for all $n \geq N_p(d)$,  which was confirmed by Griffin, Ono, Rolen and Zagier \cite{GORZ19} by showing that $J_{p}^{d,n-1}$ is hyperbolic for every degree $d$ for all sufficiently large $n$. Later, Larson and Wagner \cite{LW2019} established an explicit upper bound for $N_p(d)$. In particular, they proved that $N_p(d) \leq (3d)^{24d}(50d)^{3d^2}$ and further determined the exact values $N_p(4)=206$ and $N_p(5)=381$. More recently, Pandey \cite{Pandey2024} studied the Jensen polynomial associated to plane partition function and established an effective upper bound for $N_{\textrm{PL}}(d)$. 

Based on the numerical evidence from Mathematica,  in case of cubic overpartitions function $\overline{a}(n)$,   we conjecture the following. 
\begin{conjecture}
Let $N_{\overline{a}}(d)$ be the minimum integer such that $J_{\overline{a}}^{d,n-1}$ is hyperbolic for all $n \geq N_{\overline{a}}(d) $, then we have
\begin{table}[h]
\renewcommand{\arraystretch}{1}
{\begin{tabular}{|l|l|l|l|l|l|l|}
\hline
 $d$ &  $3$  & $4$ & $5$ & $6$ & $7$  \\ 
\hline
$N_{\overline{a}}(d)$ & $39$ &$89$  &$172$ &$279$ &$423$ \\
\hline
\end{tabular}}
\end{table}
\end{conjecture}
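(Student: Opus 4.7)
\medskip

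The plan is to upgrade the qualitative hyperbolicity statement of Theorem~\ref{hyperbolicity of Jensen polynomial} to an effective, degree-by-degree threshold, following the blueprint developed by Larson--Wagner \cite{LW2019} for $p(n)$ and adapted by Pandey~\cite{Pandey2024} for plane partitions. For each fixed $d\in\{3,4,5,6,7\}$ the conjecture splits into three tasks: (i) exhibit an explicit threshold $M(d)$ beyond which hyperbolicity can be certified analytically; (ii) verify numerically that $J_{\overline{a}}^{d,n-1}(X)$ is hyperbolic for every $n$ in the window $N_{\overline{a}}(d)\le n< M(d)$; and (iii) confirm that $J_{\overline{a}}^{d,N_{\overline{a}}(d)-2}(X)$ has a non-real root, so the listed value is truly minimal.

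First I would sharpen Corollary~\ref{corollary of asymptotic of cubic overpartition} to a full effective asymptotic expansion for $\overline{a}(n)$. Starting from the Rademacher-type formula of Theorem~\ref{exact formula for overcubic partition theorem}, the $k=1$ main term contributes $\frac{3\pi}{16n\sqrt 2}I_2(v)$, whose asymptotic series
\[
I_2(v)\sim \frac{e^v}{\sqrt{2\pi v}}\sum_{j\ge 0}\frac{(-1)^j\, c_j}{v^j},\qquad v=\pi\sqrt{\tfrac{3n}{2}},
\]
can be truncated at any desired order with completely explicit remainder. The trailing Bessel series over $k\ge 3$ odd and $k\equiv 2\pmod 4$ are estimated by the tail bound \eqref{bound_for_I2_series}, giving an error of order $e^{\pi\sqrt{n/2}}$, exponentially smaller than the main term. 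This yields, for any prescribed $D$, an expansion
\[
\log\overline{a}(n)=v+\tfrac12\log v+\log C+\sum_{r=1}^{D}\frac{a_r}{n^{r/2}}+R_D(n),\qquad |R_D(n)|\le K_D\,n^{-(D+1)/2},
\]
with explicit $K_D$. Differentiating the shift formally gives an analogous explicit expansion of $\log\bigl(\overline{a}(n+j)/\overline{a}(n)\bigr)$ in powers of $j$ and $n^{-1/2}$, which is exactly the ingredient fed to Theorem~\ref{theorem of Griffin ono rolen and zagier}.

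Next I would make the Griffin--Ono--Rolen--Zagier limit quantitative in the style of \cite{LW2019}. Write $\delta(n)^2=-\tfrac12\partial_j^2\log\overline{a}(n+j)\big|_{j=0}$ and $A(n)=\partial_j\log\overline{a}(n+j)\big|_{j=0}$ as computed from the expansion above. After the linear change of variable $X\mapsto (\delta(n)X-1)/\exp(A(n))$, the renormalised Jensen polynomial $\widetilde J^{d,n}_{\overline a}(X):=\delta(n)^{-d}\overline{a}(n)^{-1}J^{d,n}_{\overline a}\bigl((\delta(n)X-1)/e^{A(n)}\bigr)$ becomes $H_d(X)$ plus an explicit perturbation of controllable size. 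Because $H_d$ has $d$ simple real roots with an explicit minimum separation $\rho_d>0$, Rouché/continuity on each root-disk of radius $\rho_d/2$ shows that $\widetilde J^{d,n}_{\overline a}$ is hyperbolic once $\|\widetilde J^{d,n}_{\overline a}-H_d\|_{\infty,\text{disk}}<\rho_d/2$. Translating this through the explicit error bounds above gives an inequality of the form $E_d(n)<\rho_d/2$, where $E_d(n)$ is a finite, explicit rational function of $n$. Solving this inequality for each $d\in\{3,\ldots,7\}$ produces the analytic threshold $M(d)$.

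Finally, for each $d$ I would perform two numerical checks in a computer algebra system: verify that $J^{d,n-1}_{\overline a}(X)$ has $d$ real roots for every $n$ in $[N_{\overline a}(d),M(d)]$ using values of $\overline{a}(n)$ computed directly from Theorem~\ref{exact formula for overcubic partition theorem} (or from the recursion implicit in \eqref{generating function of cubic overpartition}); and verify that $J^{d,N_{\overline a}(d)-2}_{\overline a}(X)$ has a non-real pair (e.g.\ via Sturm sequences or discriminant sign), which pins down the minimal value.

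The hard part is quantitative. The Larson--Wagner bound for $p(n)$ has the shape $(3d)^{24d}(50d)^{3d^2}$, which in our setting would force $M(d)$ astronomically above the conjectured thresholds $39,89,172,279,423$, leaving a numerical window no computation can traverse. Closing this gap requires genuinely sharp, $d$-uniform control of $E_d(n)$: one must carry the asymptotic expansion of $\log\overline{a}(n)$ out to order roughly $n^{-(3d+1)/2}$, with explicit (not $O$-notation) constants, and exploit the fact that the leading correction to the Hermite limit for $\overline{a}(n)$ involves the \emph{same} shape function $v=\pi\sqrt{3n/2}$ so that the scaling $\delta(n)\sim c\,n^{-3/4}$ is known exactly. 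Optimising this bookkeeping so that $M(d)\le 500$ for $d\le 7$ is, I expect, the decisive technical obstacle; without it, the conjecture reduces to an inaccessibly large numerical verification.
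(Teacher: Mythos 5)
The statement you are addressing is presented in the paper as a \emph{conjecture}, supported only by numerical evidence from Mathematica; the paper offers no proof and explicitly lists the determination of an effective upper bound for $N_{\overline{a}}(d)$ as an open problem. There is therefore no proof in the paper to compare against. Judged on its own terms, your proposal is a sensible research program in the Larson--Wagner/Pandey mold, and the three-part architecture you describe (an analytic threshold $M(d)$, a finite numerical window $[N_{\overline{a}}(d), M(d)]$, and a failure-of-hyperbolicity check at $n = N_{\overline{a}}(d)-1$ to certify minimality) is exactly the right shape for a proof of such a statement.

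However, what you have written is a plan, not a proof, and you identify the decisive gap yourself. None of the quantitative steps is carried out: you do not produce the explicit remainder constants $K_D$, the perturbation bound $E_d(n)$, the Hermite root-separation constants $\rho_d$, or an actual numerical value of $M(d)$ for any $d$. As you concede, transplanting the Larson--Wagner machinery verbatim yields a threshold of the shape $(3d)^{24d}(50d)^{3d^2}$, which for $d$ up to $7$ places $M(d)$ astronomically beyond the conjectured values $39, 89, 172, 279, 423$ and renders step (ii) computationally impossible. Until the bookkeeping is sharpened enough to bring $M(d)$ into a range a computer can traverse --- the obstacle you correctly flag as unresolved --- the conjecture is not established by your argument; the exact thresholds in the table remain, as in the paper, an empirical observation. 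Your proposal is best read as an accurate diagnosis of what a future proof must accomplish rather than as a proof.
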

It would be also an exciting problem to find an effective upper bound for $N_{\overline{a}}(d)$.   

Finally, we consider a natural generalization of the cubic overpartition function. Let $\overline{a}_{\ell,r}(n)$ denote the number of representations of $n$ as a sum of natural numbers, where parts divisible by $\ell$ may appear in $r$ different colors, and the first occurrence of each distinct part may be overlined. Its generating function is given by
\begin{align*}
\sum_{n=0}^{\infty} \overline{a}_{\ell, r}(n) q^n =  \frac{(-q;q)_\infty (-q^{\ell};q^{\ell})_{\infty}^{r-1}}{(q;q)_\infty (q^{\ell};q^{\ell})_\infty^{r-1}}.  
\end{align*}
This framework recovers several known cases.  For example,  when $ (r, \ell)=(2,2)$, we obtain the cubic overpartition function; for $r=1$,  it reduces to the classical overpartition function; and for $\ell=2$ with arbitrary $r$,  one obtains the generalized cubic overpartition function studied by Amdeberhan, Sellers, and Singh \cite{ASS25}.

In a forthcoming work, we plan to investigate Rademacher-type exact formula for $\overline{a}_{\ell,r}(n)$ and explore  log-concavity and higher order Turán inequalities for this broader class of partition function.

{\bf Acknowledgement:} 
The authors wish to thank Prof.  Kathrin Bringmann for giving useful suggestions.  
The first author wishes to thank University Grant Commission (UGC), India, for providing Ph.D. scholarship. The second author's research is funded by the Prime Minister Research Fellowship, Govt. of India, Grant No. 2101705. The last author is grateful to the Anusandhan National Research Foundation (ANRF), India, for giving the Core Research Grant CRG/2023/002122 and MATRICS Grant MTR/2022/000545.  We sincerely thank IIT Indore for providing research-friendly environment.


\begin{thebibliography}{99}

\bibitem{Andrews} G.~E. Andrews, On the theorems of Watson and Dragonette for Ramanujan's mock theta functions,  Amer.  J. Math. {\bf 88} (1966), 454--490.  

\bibitem{ASS25} T.~Amdeberhan, J.~A.~Sellers and A.~Singh, \emph{Arithmetic properties for generalized cubic partitions and overpartitions modulo a prime}, Aequationes Math. {\bf 99} (2025), 1197--1208.





\bibitem{BO16} C. ~Bessenrodt and K.~Ono,  \emph{Maximal multiplicative properties of partitions},   Ann.  Comb.  {\bf 20} (2016),  59--64.   


\bibitem{BM11} K. ~Bringmann and K. ~Mahlburg,  \emph{An extension of the Hardy-Ramanujan circle method and applications to partitions without sequences}, Amer. J. Math. {\bf 133} (2011), 1151--1178. 

\bibitem{BFOR17} K. ~Bringmann,  A.~Folsom,  K.~Ono and L.~Rolen,   Harmonic Maass forms and mock modular forms: theory and applications, American Mathematical Society Colloquium Publications, 64, Amer. Math. Soc., Providence, RI, 2017.  

\bibitem{BO06} K. ~Bringmann and K. ~Ono,  \emph{The $f(q)$ mock theta function conjecture and partition ranks}, Invent. Math. {\bf 165} (2006),  243--266.  


\bibitem{BO2011} K. Bringmann and K. Ono,  Coefficients of harmonic Maass forms, in {\it Partitions, $q$-series, and modular forms}, 23--38,  Dev. Math.,  {\bf 23}, Springer, New York,  2012.  

%

\bibitem{Chan10-1} H.~-C.~Chan, \emph{Ramanujan's cubic continued fraction and an analog of his ``most beautiful identity''}, Int. J. Number Theory {\bf 6} (2010), 673--680.

\bibitem{Chan10-2} H.~-C.~Chan, \emph{Ramanujan’s cubic continued fraction and Ramanujan type congruences for a certain partition function}, Int. J. Number Theory \textbf{6} (2010), 819--834.

\bibitem{Chan11} H.~-C.~Chan, \emph{Distribution of a certain partition function modulo powers of primes}, Acta Math. Sin. \textbf{27} (2011), 625--634.

\bibitem{CJW2019} W.~Y.~C.~Chen, D.~X.~Q.~ Jia and L.~X.~W.~ Wang,  \emph{Higher order Turán inequalities for the partition function}, Trans. Amer. Math. Soc. {\bf 372} (2019), 2143--2165.

\bibitem{ChenTalk2010}W.~Y.~C.~Chen, \emph{Recent developments on log-concavity and q-log-concavity of combinatorial polynomials},
In: FPSAC 2010 Conference Talk Slides. http://www.billchen.org/talks/2010-FPSAC (2010). 


\bibitem{CD17} S.~Chern and M.~G.~Dastidar,  \emph{ Congruences and recursions for the cubic partition.} Ramanujan J.  {\bf 44,}  (2017) 559--
566.  

\bibitem{cubicturan} L.~Chong, Y.~Peng, and H.~W.~J.~Zhang,  \emph{Inequalities for the cubic partitions and cubic partition pairs},  Acta Math.  Sci.,  {\bf 45} (2025), 737--754.

\bibitem{overpartition} S.~Corteel and J.~Lovejoy, \emph{Overpartitions}, Trans. Amer. Math. Soc. {\bf 356} (2004), 1623--1635

\bibitem{CNV86} G.~Csordas,  T. ~S.~Norfolk,  and R.~S.~Varga,   \emph{The Riemann hypothesis and the Tur\'{a}n inequalities},  Tran.  Amer.  Math.  Soc.  {\bf 296} (1986),  521--541.  

\bibitem{Dimitrov} D.~K.~Dimitrov, \emph{Higher order Tur\'{a}n inequalities}, Proc. Amer. Math. Soc. {\bf 126} (1998), 2033--2037.

\bibitem{DeSalvo2015} S.~A.~DeSalvo and I.~Pak, \emph{Log-concavity of the partition function}, Ramanujan J. {\bf 38} (2015), 61--73.

\bibitem{DJ2024} J.~J.~W.~Dong and K.~Q.~Ji, \emph{Higher order Tur\'an inequalities for the distinct partition function}, J. Number Theory {\bf 260} (2024), 71--102.

\bibitem{Engel2017} B.~Engel, \emph{Log-concavity of the overpartition function}, Ramanujan J. {\bf 43} (2017), 229--241. 

\bibitem{Euler} L.~Euler, \emph{Introductio in Analysin Infinitorum}, Marcum-Michaelem Bousquet, Lausanne (1748). 



\bibitem{GORZ19} M.~Griffin,  K.~Ono,  L.~Rolen,  and D.~Zagier,  \emph{Jensen polynomials for the Riemann zeta function and other sequences},  Proc.  Natl.  Acad.  Sci.  USA  {\bf 116}  (2019),   11103--11110.  

\bibitem{Gros58} E.~ Grosswald, \emph{Some theorems concerning partitions}, Trans. Amer. Math. Soc., {\bf 89} (1958), 113--128.

\bibitem{Gros60}  E.~Grosswald, \emph{Partitions into prime powers}, Mich. Math. J. {\bf 7} (1960), 97--122.

\bibitem{Hagis62}  P.~Hagis, \emph{A problem on partitions with a prime modulus $p \geq 3$}, Trans. Amer. Math. Soc. {\bf 102}
(1962), 30--62.

\bibitem{Hagis63} P.~Hagis, \emph{Partitions into odd summands}, Amer. J. Math. {\bf 85} (1963), 213--222.

\bibitem{Hagis64} P.~Hagis, \emph{On a class of partitions with distinct summands}, Trans. Amer. Math. Soc. {\bf 112}
(1964) 401--415.

\bibitem{Hagis264}  P.~Hagis, \emph{Partitions into odd and unequal parts}, Amer. J. Math. {\bf 86} (1964) 317--324.



\bibitem{Hagis65} P.~Hagis, \emph{ On partitions of an integer into distinct odd summands}, Amer. J. Math. {\bf 87} (1965),
867--873.

\bibitem{Hagis66} P.~Hagis, \emph{Some theorems concerning partitions into odd summands}, Amer. J. Math. {\bf 88} (1966),
664--681.


\bibitem{Hagis71} P.~Hagis, \emph{Partitions with a restriction on the multiplicity of summands}, Trans. Amer. Math.
Soc. {\bf 155} (1971), 375--384.




















 

\bibitem{HR1918} G. H. Hardy, S. Ramanujan, {\it Asymptotic formulae in combinatory analysis}, Proc. London Math. Soc., {\bf 17} (1918), 75--115. 

\bibitem{Hir20} M.D.~Hirschhorn,  \emph{Cubic partitions modulo powers of 5},  Ramanujan J.  {\bf 51}  (2020),  67--84.  




\bibitem{LKHUA} L.~K.~Hua, \emph{On the partitions of a number into unequal parts}, Trans. Amer. Math. Soc. {\bf 51} (1942),
194–-201.


\bibitem{I61} S.  ~Iseki,  \emph{Partitions in certain arithemtic progressions},  Amer.  J.  Math.  {\bf 83} (1961),  243--264.  

\bibitem{IK2004} H.~Iwaniec and E.~Kowalski,  Analytic Number Theory,    American Mathematical Society Colloquium  Publications, {\bf 53},  Amer.  Math.  Soc.,   Providence,  RI,  2004.


\bibitem{Kim} B.~Kim, \emph{The overcubic partition function mod 3,} In \emph{Ramanujan rediscovered}, 157--163, Ramanujan Math. Soc. Lect. Notes Ser., 14, Ramanujan Math. Soc., Mysore.


\bibitem{LW2019} H.~K.~Larson and I.~Wagner, \emph{Hyperbolicity of the partition Jensen polynomials}, Res. Number Theory {\bf 5} (2019), 1--12.

\bibitem{Lin17} B. ~L.~S.~Lin,  \emph{Congruences modulo 27 for cubic partition pairs},  J.  Number Theory {\bf 171} (2017),  31--42. 


\bibitem{LiuZhang2021} E.~Y.~S.~Liu and  H.~W.~J.~Zhang, \emph{Inequalities for the overpartition function}, Ramanujan J. {\bf 54} (2021), 485--509.

\bibitem{Mauth} L.~Mauth, \emph{Exact formula for cubic partitions,} Ramanujan J. {\bf 64} (2024), no.~4, 1323--1334.

\bibitem{Mer25} M. ~Merca,  \emph{Cubic partitions in terms of distinct partitions},  Ramanujan J. {\bf 67} (2025), no.~2, Paper No. 21, 19 pp.  


\bibitem{Nicolas78} J.-L.~Nicolas, \emph{Sur les entiers N pour lesquels il y a beaucoup de groupes ab\'{e}liens d’order N}, Ann. Inst. Fourier {\bf 28} (1978), 1--16.

\bibitem{N40} I.~Niven,  \emph{On a certain partition function},  Amer.  J.  Math.  {\bf 62} (1940),  353--364.   


\bibitem{OPR2022}K.~Ono, S.~Pujahari and L.~Rolen, \emph{Tur\'{a}n inequalities for the plane partition function}, Adv. Math. {\bf 409} (2022),  Paper No.  108692.

\bibitem{Pandey2024} B.~V.~Pandey, \emph{Higher Tur\'{a}n inequalities for the plane partition function}, Res. Number Theory {\bf 10} (2024),  Paper No.  69,  19 pp. 



\bibitem{HRad1937} H.~Rademahcer, \emph{On the partition function $p(n)$}, Proc.  London Math.  Soc.,  {\bf 43} (1937), 241--254.

\bibitem{HRad1943} H.~Rademacher, \emph{On the expansion of the partition function in a series}, Ann. Math. {\bf 44} (1943), 416--422.

\bibitem{HRad1973} H~Rademacher, Topics in Analytic Number Theory, Die Grundelhren der mathematischen Wissenschaften, Bd. 169. Springer, Berlin (1973)

\bibitem{RZ38} H.~ Rademacher, and H.~S.~Zuckerman, \emph{ On the Fourier Coefficients of Certain Modular Forms of Positive Dimension}, Annals of Mathematics, {\bf 39} (1938), 433--462. 


\bibitem{S88} B.~E.~Sagan, \emph{Inductive and injective proofs of log concavity results}, Discrete Math. {\bf 68} (1988), 281--292.


\bibitem{Sel14} J.~A. Sellers, \emph{Elementary proofs of congruences for the cubic and overcubic partition functions},  Australas. J. Combin.  {\bf 60} (2014), 191--197

\bibitem{Sills2010} A.~V.~Sills, \emph{Rademacher-type formulas for restricted partition and overpartition functions}, Ramanujan J. {\bf 23} (2010), 253--264.

\bibitem{S89} R.~Stanley, \emph{Log-concave and unimodal sequences in algebra, combinatorics, and geometry}, Ann. New York Acad. Sci. {\bf 576} (1989), 500--535.

\bibitem{szego} Szeg\"{o}~G. \emph{On an inequality of P. Tur\'{a}n concerning Legendre polynomials}, Bull. Amer. Math. Soc. {\bf 54} (1948), 401--405.


\bibitem{Zucker39} H.~S.~Zuckerman, \emph{On the coefficients of certain modular forms belonging to subgroups of the modular group,} Trans. Amer. Math. Soc. {\bf 45} (1939), 298--321.

\end{thebibliography}
\end{document}